\documentclass[10pt]{amsart}

\usepackage{multirow}
\usepackage{amsmath,fixltx2e}
\setlength\textwidth{7cm}
\usepackage{amsthm}
\usepackage{amssymb}
\usepackage{amsfonts}
\usepackage{mathtools}
\usepackage{MnSymbol}

\newcommand{\overbar}[1]{\mkern 1.5mu\overline{\mkern-1.5mu#1\mkern-1.5mu}\mkern 1.5mu}

\newtheorem{theorem}{Theorem}[section]
\newtheorem{corollary}[theorem]{Corollary}
\newtheorem{proposition}[theorem]{Proposition}
\newtheorem{lemma}[theorem]{Lemma}
\newtheorem{conjecture}[theorem]{Conjecture}
\newtheorem{definition}[theorem]{Definition}
\newtheorem{remark}[theorem]{Remark}
\newtheorem{assumption}[theorem]{Assumption}
\usepackage[utf8]{inputenc}
\usepackage[legalpaper, portrait, margin=1.5in]{geometry}
\usepackage{graphicx}
\usepackage{hyperref}
\usepackage{tikz-cd} 
\tikzcdset{
  cells={font=\everymath\expandafter{\the\everymath\displaystyle}},
}

\allowdisplaybreaks

\author[R. Gambheera]{Rusiru Gambheera}
\address{Department of Mathematics, University of California Santa Barbara, CA 93106-3080, USA}
\email{rusiru@ucsb.edu}

\keywords{$p$--adic $L$--functions, Artin $L$--functions, Ritter-Weiss modules, Fitting ideals, Equivariant Main Conjectures in Iwasawa Theory }

\subjclass[2020]{11R23, 11R29, 11R34, 11R42}

\begin{document}

\begin{abstract} For an abelian, CM extension $H/F$ of a totally real number field $F$, we improve upon the reformulation of the Equivariant Tamagawa Number Conjecture for the Artin motive $h_{H/F}$ by Atsuta-Kataoka in \cite{Atsuta-Kataoka-ETNC} and extend the results proved in \cite{Bullach-Burns-Daoud-Seo}, \cite{Dasgupta-Kakde-Silliman-ETNC}, \cite{gambheera-popescu} and \cite{Dasgupta-Kakde} on conjectures by Burns-Kurihara-Sano \cite{Burns-Kurihara-Sano} and Kurihara \cite{Kurihara}. Then, we consider the $\mathbb{Z}_p[[Gal(H_\infty/F)]]-$module $X_S^{T}$ where $p>2$ is a prime and $H_{\infty}$ is the cyclotomic $\mathbb{Z}_p-$ extension of $H$. This is a generalization of the classical unramified Iwasawa module $X$. By taking the projective limits of the results proved at finite layers of the Iwasawa tower, as our main result, extending the earlier results of Gambheera-Popescu in \cite{gampheera-popescu-RW}, we calculate the Fitting ideal of $X_S^{T,-}$ for non-empty $T$, which is an integral equivariant refinement of the Iwasawa main conjecture for totally real fields proved by Wiles. We also give a conjectural answer to the Fitting ideal of the module $X^-$. 
\end{abstract}

\title[An Integral Equivariant Refinement of the Main Conjecture]{An Integral Equivariant Refinement of the Iwasawa Main Conjecture for Totally Real Fields}

\maketitle

\section{Introduction}
Let $p$ be an odd prime. Let $H/F$ be an abelian CM extension of a totally real field $F$ and let $T$ be a nonempty set of nonarchemedian places in $F$ that are away from the primes above $p$. Now, consider the cyclotomic $\mathbb{Z}_p-$extension of $H$, denoted by $H_{\infty}$, whose $n-$th layer is $H_n$. Let $\mathcal{G}=Gal(H_{\infty}/F)$ and $\mathbb{Z}_p[[\mathcal{G}]]$ be the Iwasawa algebra of $\mathcal{G}$. Now, if $A^T(H_n)$ is the Sylow $p-$subgroup of the $T-$ray class group of $H_n$, we define the following equivariant Iwasawa module ($\mathbb{Z}_p[[\mathcal{G}]]-$module), 
$$X^T:=\varprojlim_n \text{  } A^
T(H_n \text{  })$$
where the transition maps are induced by the norm maps between layers. Notice that $X^T$ is a variant of the classical unramified Iwasawa module, $X$. If $T=\emptyset$, then $X^T$ is precisely $X$.\\

Now, the Iwasawa main conjecture for totally real fields proved by Wiles (Theorem 1.2 in \cite{Wiles}) can be reformulated (see Theorem \ref{Wiles}) as follows. 

\begin{theorem}\label{wiles-intro}\textbf{(Wiles)}
    Under the simplifying assumptions that $H$ contains $p-$th roots of unity and the cyclotomic $\mathbb{Z}_p-$extension of $F_{\infty}/F$ of $F$ and $H/F$ are disjoint, we have the following equality of ideals.
    $$Fitt_{\mathbb{Q}_p(\mathbb{Z}_p[[\mathcal{G}]]^-)}(X^{T,-}\otimes_{\mathbb{Z}_p}\mathbb{Q}_p)=(\Theta_{S_p\cup S_{\infty}}^T(H_{\infty}))$$
\end{theorem}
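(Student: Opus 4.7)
The plan is to deduce this Fitting-ideal reformulation from Wiles' classical identification of the characteristic ideal of $X^-$ with a $p$-adic $L$-function, in three steps.

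First, under the simplifying hypotheses, $\mathcal{G}\cong\Delta\times\Gamma$ with $\Delta=\mathrm{Gal}(H/F)$ finite and $\Gamma\cong\mathbb{Z}_p$, so after projecting onto the odd $\mathbb{Q}_p$-characters of $\Delta$ one has
\[
\mathbb{Z}_p[[\mathcal{G}]]^-\otimes_{\mathbb{Z}_p}\mathbb{Q}_p\;\cong\;\prod_{\chi\text{ odd}}\mathcal{O}_\chi[[\Gamma]]\otimes_{\mathbb{Z}_p}\mathbb{Q}_p,
\]
a finite product of principal ideal domains. Over such a ring, every finitely generated torsion module admits a square presentation matrix, so its initial Fitting ideal is principal and coincides with its characteristic ideal. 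Hence it suffices to identify $\mathrm{char}(X^{T,-}\otimes\mathbb{Q}_p)$ with $(\Theta_{S_p\cup S_\infty}^T(H_\infty))$.

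Second, I would compare $X^T$ with $X$ via the class-field-theoretic exact sequence at each finite layer
\[
E(H_n)\otimes\mathbb{Z}_p\longrightarrow\bigoplus_{v\in T,\,w\mid v}\kappa(w)^\times\otimes\mathbb{Z}_p\longrightarrow A^T(H_n)\longrightarrow A(H_n)\longrightarrow 0.
\]
Taking projective limits, passing to minus parts and tensoring with $\mathbb{Q}_p$ kills the global-unit term (the minus part of the units of a CM field is torsion), yielding a short exact sequence $0\to Y_T^-\otimes\mathbb{Q}_p\to X^{T,-}\otimes\mathbb{Q}_p\to X^-\otimes\mathbb{Q}_p\to 0$. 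A direct local computation, available because $T$-places are away from $p$, shows that the characteristic ideal of $Y_T^-\otimes\mathbb{Q}_p$ is, on each odd component, generated by $\prod_{v\in T}(1-\sigma_v^{-1}\mathrm{N}v)$ — precisely the Euler factor by which $\Theta_{S_p\cup S_\infty}^T$ differs from $\Theta_{S_p\cup S_\infty}$.

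Third, Wiles' theorem in its equivariant form gives $\mathrm{char}(X^-\otimes\mathbb{Q}_p)=(\Theta_{S_p\cup S_\infty}(H_\infty))$ on each odd character component. Multiplicativity of characteristic ideals in short exact sequences over PIDs then combines with Step 2 and the Fitting-versus-characteristic identification of Step 1 to deliver the theorem. The main obstacle is the local calculation in Step 2: one must verify that, at each $v\in T$ and odd $\chi$, the projective limit of the $p$-part of $\kappa(w)^\times$ up the cyclotomic tower contributes precisely the Euler factor $(1-\chi(\sigma_v)\mathrm{N}v)$ with the correct Frobenius normalization. The disjointness hypothesis $H\cap F_\infty=F$ ensures that the decomposition group of $v$ in $\mathcal{G}$ factors cleanly through $\Delta\times\Gamma$, so this character-by-character bookkeeping goes through without further complications.
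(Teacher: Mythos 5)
Your overall strategy coincides with the paper's: compare $X^T$ with $X$ via the ray-class exact sequence from Greither--Popescu, compute the contribution of the residue-field terms, and invoke Wiles' Main Conjecture for $X^-$. However, Step 2 contains a genuine error. You claim that ``taking projective limits, passing to minus parts and tensoring with $\mathbb{Q}_p$ kills the global-unit term.'' At each finite level $H_n$ the minus part of the units is indeed $\mu(H_n)_p$, which is finite; but the projective limit of these finite groups under the norm maps up the cyclotomic tower is the Tate module $\mathbb{Z}_p(1)$, which has $\mathbb{Z}_p$-rank one. Tensoring with $\mathbb{Q}_p$ therefore does \emph{not} kill this term; it gives $\mathbb{Q}_p(1)$. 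The correct sequence is the four-term one
\[
0\to \mathbb{Q}_p(1)\to \bigoplus_{v\in T}\mathbb{Q}_p(\mathbb{Z}_p[[\mathcal{G}]]^-)/(1-\sigma_{v,\infty}^{-1}\mathbf{N}v)\to X^{T,-}\otimes\mathbb{Q}_p\to X^-\otimes\mathbb{Q}_p\to 0,
\]
not the three-term one you wrote down.

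Relatedly, the form of Wiles' theorem you invoke in Step 3, namely $Fitt_{\mathbb{Q}_p(\mathbb{Z}_p[[\mathcal{G}]]^-)}(X^-\otimes\mathbb{Q}_p)=(\Theta_{S_p\cup S_\infty}^{\emptyset})$, is not quite right: $\Theta^{\emptyset}_{S_p\cup S_\infty}(H_\infty)$ has a pole on the Teichm\"uller component (coming from the pole of the Deligne--Ribet $p$-adic $L$-function at the trivial character), so it is not even an element of $\mathbb{Q}_p(\mathbb{Z}_p[[\mathcal{G}]]^-)$, and the displayed ``equality of ideals'' does not typecheck. The correct statement, which the paper extracts from Lemmas 5.13(2) and 5.21(2) of Greither--Popescu combined with Wiles' Theorem 1.2, is $(\Theta_{S_p\cup S_\infty}^{\emptyset})\cdot Fitt_{\mathbb{Q}_p(\mathbb{Z}_p[[\mathcal{G}]]^-)}(\mathbb{Q}_p(1))=Fitt_{\mathbb{Q}_p(\mathbb{Z}_p[[\mathcal{G}]]^-)}(X^-\otimes\mathbb{Q}_p)$. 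Your two inaccuracies cancel each other exactly: the missing $\mathbb{Q}_p(1)$ on the left of the exact sequence is precisely the factor $Fitt(\mathbb{Q}_p(1))$ that is absent from your version of Wiles. So your final equality is correct, but the intermediate reasoning is not, and without tracking the $\mathbb{Q}_p(1)$ term the argument is not rigorous. The fix is straightforward: restore the $\mathbb{Q}_p(1)$ kernel, use the correct form of Wiles, and cancel the common $Fitt(\mathbb{Q}_p(1))$ factor at the end, exactly as in the paper. The remainder of your argument (reduction to a product of PIDs where Fitting ideals equal characteristic ideals, and the local computation of the residue-field contribution) is sound and matches the paper's.
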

Here $\ast^-$ denotes the $(-1)$--eigenspace of $\ast$ under the action of the unique complex conjugation automorphism in $\mathcal{G}$ and $\mathbb{Q}_p(\mathbb{Z}_p[[\mathcal{G}]]^-):=\mathbb{Z}_p[[\mathcal{G}]]^-\otimes_{\mathbb{Z}_p}\mathbb{Q}_p$. $Fitt$ denotes the $0-$th Fitting ideal and $\Theta_{S_p\cup S_{\infty}}^T(H_{\infty})$ is the equivariant $p-$adic $L-$function attached to the data $(H_{\infty},F,T,p)$. Check section 5 for the precise definitions.\\

As explained in Remark \ref{not-equivariant}, the theorem above is a ``character by character" result rather than an equivariant result. Moreover, since the module $X^{T,-}$ is tensored by $\mathbb{Q}_p$, there is a significant loss of integral information. To resolve both of these issues one might consider computing $Fitt_{\mathbb{Z}_p[[\mathcal{G}]]^-}X^{T,-}$ in terms of an equivariant $p-$adic L-function. But, the module $X^{T,-}$  is only very rarely of finite projective dimension. As a result its Fitting ideal is not principal and was considered difficult to compute directly. Therefore, equivariant integral refinements of the Iwasawa main conjecture that appear in the literature usually link a module $M$ that is cohomologically better behaved and have projective dimension 1 (hence, the Fitting ideal of $M$ is principal) to integral equivariant $p-$adic L functions. These modules are closely related to the classical Iwasawa modules $X$.\\

For instance, in the equivariant main conjecture of Greither-Popescu \cite{Greither-Popescu}, which was proved under the $\mu=0$ conjecture, the module $M$ is $T_p(\mathcal{M}^{H_{\infty}}_{S,T})$, the $p-$adic realization of the abstract 1-motive, $\mathcal{M}^{H_{\infty}}_{S,T}$ attached to the data $(H_{\infty},F,S,T,p)$ where $S$ is an auxiliary set of places of $F$ that is disjoint from $T$. $T_p(\mathcal{M}^{H_{\infty}}_{S,T})$ is closely related to the classical Iwasasa module, $\mathfrak{X}_S$, the Galois group of the maximal pro-$p$ abelian extension of $H_{\infty}$, unramified away from $S\cup S_p$ where $S_p$ is the set of primes above $p$ in $F$. Using the work of Dasguta-Kakde \cite{Dasgupta-Kakde} on the Brumer-Stark conjecture, Gambheera-Popescu \cite{gambheera-popescu} reformulated and proved the main conjecture of Greither-Popescu without the $\mu=0$ condition. In their reformulation, the role of $M$ is played by $Sel_S^T(H_{\infty})_p$, a certain Selmer module defined at the top of the cyclotomic Iwasawa tower. This module is an extension of the module $\varprojlim_n\text{ } A^T(H_n)^{\vee}$ where $A^T(H_n)^{\vee}=Hom_{\mathbb{Z}_p} (A^T(H_n),\mathbb{Q}_p/\mathbb{Z}_p)$, the Pontragin dual of $A^T(H_n)$ and the transition maps are induced by the inclusion maps. In \cite{gampheera-popescu-RW}, Gambheera-Popescu proved an equivariant main conjecture for the Ritter-Weiss module, $\nabla_S^T(H_{\infty})_p$ at $H_{\infty}$. This module contains $X_S^T:=\varprojlim_n \text{ } A_S^T(H_n)$ where $A_S^T(H_n)$ is the Sylow $p-$subgroup of a certain $(S,T)-$ray class group $Cl_S^T(H_n)$ (see section 3 for the definition). Consequently $Fitt_{\mathbb{Z}_p[[\mathcal{G}]]^-}(X_S^{T,-})$ was calculated under some conditions on $S$ and $T$.\\

As the main result of this paper (Theorem \ref{main theorem}), we calculate $Fitt_{\mathbb{Z}_p[[\mathcal{G}]]^-}(X_S^{T,-})$ for $S, T$ generalizing Theorem 4.9 of \cite{gampheera-popescu-RW}, directly without going through an equivariant main conjecture for a ``nice" Iwasawa module $M$.

\begin{theorem}
    Suppose that $S$ and $T$ satisfy Assumption \ref{assumption-Iwasawa}. Then, we have the following.
    $$Fitt_{\mathbb{Z}_p[[\mathcal{G}]]^-}(X_S^{T,-})=\Theta_{((S\cap S_{ram})\cup S_p)}^T(H_{\infty})\prod_{v\in S_{ram}\setminus (S\cup T\cup S_p)} Q_v \prod_{v\in S_p\setminus S}\Tilde{J}_v^{\infty}\prod_{v\in S\cap S_{ram}}R_v$$
    Here $S_{ram}=S_{ram}(H_{\infty}/F)$.
\end{theorem}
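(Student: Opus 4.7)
The plan is to establish the formula in two stages: first prove an analogous integral equivariant Fitting ideal formula at each finite layer $H_n$, and then pass to the projective limit along the cyclotomic tower. At layer $n$, the target is a formula of the shape
\[
Fitt_{\mathbb{Z}_p[Gal(H_n/F)]^-}(A_S^T(H_n)^-)=\Theta_{(S\cap S_{ram})\cup S_p}^T(H_n/F)\prod_{v}(\text{local factor at }v),
\]
which should follow from the improved reformulation of the ETNC of Atsuta-Kataoka (as sharpened in the earlier sections of the paper) together with the proven cases of the Burns-Kurihara-Sano and Kurihara conjectures cited in the abstract. Concretely, I would fit $A_S^T(H_n)^-$ into a short exact sequence with a module $M_n$ of projective dimension at most $1$, whose Fitting ideal is principal and computable via those conjectures, and a direct sum of local modules supported at the ``bad'' places that contributes the factors $Q_v$, $R_v$, and the $n$-th level analogue of $\tilde{J}_v^\infty$.

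The local analysis is delicate but standard: at $v\in S_{ram}\setminus(S\cup T\cup S_p)$ one inserts the Euler factor $Q_v$ measuring the failure of $v$ to lie in $S$; at $v\in S\cap S_{ram}$ the factor $R_v$ compensates for the presence of a ramified place in $S$; and at $v\in S_p\setminus S$ the local $p$-adic contribution is controlled by the inertia subgroup inside the decomposition group at $v$. These modifications are tracked by the standard shift formulas for Fitting ideals under changes of $S$ and $T$, and match, via the Deligne-Ribet interpolation, the corresponding changes in the equivariant $L$-values; this lets me reduce to the special configurations treated in Theorem 4.9 of \cite{gampheera-popescu-RW} whenever convenient, and then transfer back to general $S,T$ satisfying Assumption \ref{assumption-Iwasawa}.

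The second stage is the passage to the limit. The modules $A_S^T(H_n)^-$ form an inverse system under the norm maps whose limit is $X_S^{T,-}$, and the finite group rings $\mathbb{Z}_p[Gal(H_n/F)]^-$ assemble to $\mathbb{Z}_p[[\mathcal{G}]]^-$. The key technical point is that in this pseudocompact setting, Fitting ideals commute with inverse limits of norm-compatible finitely presented modules, so the finite-level equalities combine into the stated equality. The hard part will be handling the primes $v\in S_p\setminus S$: these ramify nontrivially in the cyclotomic $\mathbb{Z}_p$-extension, their decomposition groups in $\mathcal{G}$ are infinite, and their finite-level local factors do not stabilize in any naive way. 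One must show that the profinite limit of these factors is precisely $\tilde{J}_v^\infty$, and that the resulting discrepancy with the Iwasawa-theoretic Stickelberger element $\Theta_{((S\cap S_{ram})\cup S_p)}^T(H_\infty)$ is exactly what the finite-level formula predicts. A secondary, more technical point is that the hypothesis $T\neq\emptyset$ is used to force the unit contribution on the minus side to vanish, keeping the Fitting-ideal computation free of regulator-type corrections and aligning the cohomological dimensions needed for the finite-level input.
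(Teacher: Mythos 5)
Your two-stage outline (finite-level equality, then projective limit) matches the paper's architecture in spirit, but there is a genuine gap at the first stage that the second stage cannot absorb as you have set things up. At a finite layer $H_n$ there is \emph{no} clean formula of the shape $Fitt_{\mathbb{Z}_p[G_n]^-}(A_S^{T,-}(H_n))=\Theta\cdot\prod_v(\text{local factor})$. What is actually provable at finite level, via the new module $\Omega_{S,S'}^T(H_n)$ and $ETNC_p^-$ (this is Proposition~\ref{equality} in the paper), is an identity of the form
\[
Fitt_{\mathbb{Z}_p[G_n]^-}(A_S^{T,-}(H_n))\prod_{v\in S_f}(\sigma_{v,n}-1)
=(\Theta_{S,S'}^T(H_n))\,Fitt_{\mathbb{Z}_p[G_n]^-}^{[1]}((Z_{S'})_p^-)\,Fitt_{\mathbb{Z}_p[G_n]^-}(M_p^-),
\]
with a correction factor $\prod_{v\in S_f}(\sigma_{v,n}-1)$ on the left. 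In the finite group ring $\mathbb{Z}_p[G_n]^-$ each $\sigma_{v,n}-1$ is a zero divisor, so you cannot divide through and extract a formula for $Fitt(A_S^{T,-}(H_n))$ alone; the cancellation is legitimate only \emph{after} passing to $\mathbb{Z}_p[[\mathcal G]]^-$, where $\sigma_{v,\infty}$ can be chosen of infinite order so that $\sigma_{v,\infty}-1$ becomes a nonzero divisor. Your proposal inverts the order: it cancels at finite level and then takes limits, which does not go through. Relatedly, reducing ``whenever convenient'' to the special configurations of Theorem~4.9 of \cite{gampheera-popescu-RW} is not available here: those configurations carry more restrictive hypotheses on $S$ and $T$ (precisely what Assumption~\ref{assumption-Iwasawa} relaxes), and the present theorem is a strict generalization requiring the new module $\Omega_{S,S'}^T$.

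The second stage also understates the required machinery. The assertion that ``Fitting ideals commute with inverse limits of norm-compatible finitely presented modules'' is only part of the story: because the finite-level identity involves \emph{products of several ideals}, some of them fractional (shifted Fitting ideals of $Z_{S'}$ and $M$), one must show that projective limits commute with these products. This is exactly what Lemmas~\ref{Proj-ideal-lemma} and~\ref{ignore-generators} supply. You do correctly identify the hardest point, the places $v\in S_p\setminus S$ whose local factors do not stabilize along the tower, but the mechanism that resolves it is not merely topological: it is the observation that the inconvenient generators (those associated with the inertia component meeting $\Gamma$ and the norm element $N(I_{v,n})$ for $p\mid v$) are multiplied by $p$ under each transition map $\pi_n^{n+1}$, and therefore contribute nothing to the limit by Lemma~\ref{ignore-generators}; this is what makes the limit collapse to $\tilde J_v^\infty$ rather than to something involving the full $\mathcal J$ of an infinite inertia group. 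Without pinning down both of these points --- the unavoidable correction factor at finite level and the generator-by-generator control of the limit --- your argument does not close.
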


The first term on the right hand side is a equivariant $p-$adic L-function. The other terms are essentially defined in terms of the splitting and ramification data at the primes $v$. For precise definitions see section 5. Observe that the Assumption \ref{assumption-Iwasawa} is much relaxed than the conditions for $S,T$ in Theorem 4.9 in \cite{gampheera-popescu-RW}. For instance, now we have the option of putting $S=S_{\infty}$ which gives us the following.

\begin{corollary}
    Suppose $T\cap S_p=\emptyset$. Then, we have the following.
    $$Fitt_{\mathbb{Z}_p[[\mathcal{G}]]^-}(X^{T,-})=\Theta_{S_p \cup S_{\infty}}^T(H_{\infty})\prod_{v\in S_{ram}\setminus (T\cup S_p)} Q_v \prod_{v\in S_p}\Tilde{J}_v^{\infty}$$
\end{corollary}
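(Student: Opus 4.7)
The plan is to deduce the corollary directly from the main theorem by specializing $S$ to $S_\infty$ and then verifying that each piece of the formula simplifies to what is claimed. So the first step is to check that the pair $(S,T)=(S_\infty,T)$ satisfies Assumption \ref{assumption-Iwasawa}: the introduction advertises the corollary precisely as the new flexibility gained by relaxing the hypotheses on $S$, so the only nontrivial input from the hypothesis side is the condition $T\cap S_p=\emptyset$ together with the usual disjointness $S_\infty\cap T=\emptyset$, both of which are given.

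Next, I would identify the Iwasawa module $X_{S_\infty}^T$ with $X^T$. By definition $X_{S_\infty}^T=\varprojlim_n A_{S_\infty}^T(H_n)$, where $A_{S_\infty}^T(H_n)$ is the $p$-part of the $(S_\infty,T)$-ray class group $Cl_{S_\infty}^T(H_n)$. Since localizing away from the archimedean places imposes no extra condition on ideals of a number field, the $(S_\infty,T)$-ray class group coincides with the usual $T$-ray class group, so $A_{S_\infty}^T(H_n)=A^T(H_n)$, and therefore $X_{S_\infty}^{T,-}=X^{T,-}$ after taking inverse limits and minus parts.

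Then I would simplify each factor on the right-hand side of the main theorem under $S=S_\infty$. Because $S_{ram}$ consists only of finite primes, $S_\infty\cap S_{ram}=\emptyset$, which immediately kills the factor $\prod_{v\in S\cap S_{ram}}R_v$ (empty product) and reduces $(S\cap S_{ram})\cup S_p$ to $S_p$. The two remaining products collapse to the expected shape: $S_{ram}\setminus(S\cup T\cup S_p)=S_{ram}\setminus(T\cup S_p)$ and $S_p\setminus S=S_p$. The last piece to check is the $L$-function identity $\Theta_{S_p}^T(H_\infty)=\Theta_{S_p\cup S_\infty}^T(H_\infty)$, which should follow from the definition of the equivariant $p$-adic $L$-function in section 5: archimedean primes do not contribute $p$-adic Euler factors, so including $S_\infty$ in the depleted set changes nothing.

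The main obstacle, such as it is, is really the bookkeeping in the previous paragraph: one needs the precise conventions of section 5 for $\Theta_S^T$, $Q_v$, $\tilde{J}_v^\infty$ and $R_v$ to confirm that they restrict correctly from the main theorem's setting. Once Assumption \ref{assumption-Iwasawa} is verified for $(S_\infty,T)$ and the equality $X_{S_\infty}^T=X^T$ is noted, the corollary is a mechanical specialization, with no new arithmetic content beyond the main theorem.
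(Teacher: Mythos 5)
Your proposal is correct and takes essentially the same route as the paper, which proves the corollary by the one-line observation that $X_{S_\infty}^{T,-}=X^{T,-}$ and then substitutes $S=S_\infty$ into Theorem \ref{main theorem}. Your write-up simply makes explicit the bookkeeping the paper leaves implicit (verifying Assumption \ref{assumption-Iwasawa}, the vanishing of the $R_v$-product since $S_\infty\cap S_{ram}=\emptyset$ under the paper's convention that $S_{ram}$ denotes the finite ramification locus, and that adjoining $S_\infty$ to the depleted set of the $L$-function changes nothing), all of which is sound.
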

Observe that this is an integral equivariant refinement of the Wiles' main conjecture for totally real fields, Theorem \ref{wiles-intro}. By ``tensoring" the above result by $\mathbb{Q}_p$, one can recover Theorem \ref{wiles-intro}.\\

Notice that we still do not have the option of putting $T=\emptyset$, as $T$ is always assumed to be nonempty. It is important to emphasize that this is more than a technical condition arising from our strategy, because if we plug in $T=\emptyset$ to the right hand side, we might get a non-integral element. However, based on the Corrollary above, we give a conjectural answer to $Fitt_{\mathbb{Z}_p[[\mathcal{G}]]^-}(X^{-})$. See Conjecture \ref{conjecture}. \\

The strategy to prove the main theorem of this paper is as follows.\\

For an abelian CM extension $H/F$ of a totally real field $F$, the minus part of the Equivariant Tamagawa Number Conjecture (ETNC) for the Artin motive $h_{H/F}$ was proved away from the prime 2, by Bullach-Burns-Daoud-Seo in \cite{Bullach-Burns-Daoud-Seo} and at all primes by Dasgupta-Kakde-Silliman in \cite{Dasgupta-Kakde-Silliman-ETNC}. Both the proofs took an Euler systems approach. In \cite{gampheera-popescu-RW}, Gambheera-Popescu also gave an alternative proof, away from the prime 2, using their main conjecture for the Ritter-Weiss module and the method of Taylor-Wiles primes. In \cite{Atsuta-Kataoka-ETNC}, Kataoka-Atsuta give a reformulation of ETNC, in terms of their variant of the Ritter-Weiss module, $\Omega_{S'}^T(H)$ attached to the data $(H/F,S',T)$ where $S'$ and $T$ are two sets of places in $F$ satisfying some conditions. Namely, their reformulation computes $Fitt_{\mathbb{Z}[G]^-}(\Omega_{S'}^T(H))$ in terms of an equivariant L-function where $G=Gal(H/F)$. We define a new module $\Omega_{S,S'}^T(H)$ for the disjoint auxiliary sets $S,S'$ and $T$ which simultaneously generalizing both the Ritter-Weiss module, $\nabla_S^T(H)^-$ considered in \cite{Dasgupta-Kakde} and $\Omega_{S'}^T(H)$ considered in \cite{Atsuta-Kataoka-ETNC}. See Remark \ref{Omega-equiv}. This module lies in the following short exact sequence.

\begin{equation}
0\xrightarrow[]{} Cl_S^T(H)^-\xrightarrow[]{}\Omega_{S,S'}^T(H)\xrightarrow[]{} Y_{S}^- \oplus Z_{S'}^-\xrightarrow[]{} 0
\end{equation}
The modules $Y_{S}$ and $Z_{S'}$ are defined in section 3.
 Now, using ETNC we can calculate $Fitt_{\mathbb{Z}_p[G]^-}(\Omega_{S,S'}^T(H)\otimes \mathbb{Z}_p)$. By doing this calculation at each layer $H_n$ of the cyclotomic Iwasawa tower, we obtain partial information about $Fitt_{\mathbb{Z}_p[G]^-}(A_S^{T,-}(H_n))$. This is done in Theorem \ref{equality}. For this we use the  the Shifted Fitting ideal calculations done in \cite{Atsuta-Kataoka-Fitt} and \cite{gampheera-popescu-RW}. Now, by taking projective limits of these results, using techniques developed essestially by Greither-Kurihara in \cite{Greither-Kurihara} we obtain the main result.\\
 
 Moreover, essentially due to Remark \ref{Omega-equiv} and the reformulation of ETNC, we also get the following generalization (Corollary \ref{global-Burns-Kurihara-Sano}) of Theorem 1.14 in \cite{gambheera-popescu}, which is a conjecture by Burns-Kurihara-Sano \cite{Burns-Kurihara-Sano}.

\begin{theorem}\label{intro-global-Burns-Kurihara-Sano}
    Suppose that $S_{ram} \subseteq S \cup T$, $\mu(H)^T$ is trivial and $T$ does not contain any wildly ramified primes away from 2. Then, we have the following.

    $$Fitt_{\mathbb{Z}[G]^-}(\nabla_S^T(H)^-)=(\Theta_S^T)$$
\end{theorem}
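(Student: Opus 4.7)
The plan is to derive this corollary from the reformulated Equivariant Tamagawa Number Conjecture via the generalized Ritter--Weiss module $\Omega_{S,S'}^T(H)$ discussed in Remark \ref{Omega-equiv}. The key observation is that $\nabla_S^T(H)^-$ is recovered from $\Omega_{S,S'}^T(H)$ by specialization of the auxiliary set $S'$: the summand $Z_{S'}^-$ is constructed to vanish when $S'$ is taken to be empty (or suitably minimal), in which case the defining short exact sequence
$$0\to Cl_S^T(H)^-\to \Omega_{S,S'}^T(H)\to Y_S^-\oplus Z_{S'}^-\to 0$$
collapses to the standard presentation of $\nabla_S^T(H)^-$ used in \cite{Dasgupta-Kakde}. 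The first step would therefore be to verify this identification by matching the explicit definitions of the two modules term by term.

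Having identified $\nabla_S^T(H)^-$ with the appropriate specialization of $\Omega_{S,S'}^T(H)^-$, I would apply the paper's improvement on Atsuta--Kataoka's reformulation of ETNC, which expresses $Fitt_{\mathbb{Z}[G]^-}(\Omega_{S,S'}^T(H))$ as an equivariant $L$--value multiplied by local correction factors indexed by primes where the module fails to be locally of projective dimension one; these are the ramified primes lying outside the allowed auxiliary sets. Specializing in the variable $S'$ then yields a formula of the shape $(\Theta_S^T)\cdot\prod_{v\in S_{ram}\setminus(S\cup T)}(\text{local factor at }v)$ for $Fitt_{\mathbb{Z}[G]^-}(\nabla_S^T(H)^-)$.

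Under the hypothesis $S_{ram}\subseteq S\cup T$, the indexing set of the correction product is empty and the Fitting ideal collapses to the principal ideal $(\Theta_S^T)$, which is the desired equality. The triviality of $\mu(H)^T$ is used to guarantee that the minus part $\nabla_S^T(H)^-$ has finite projective dimension, so that the principal Fitting ideal formula has integral meaning, while the hypothesis that $T$ contains no wildly ramified primes away from $2$ is exactly the input needed for the ETNC reformulation to avoid inverting $2$. The latter point is what allows us to go beyond Theorem 1.14 of \cite{gambheera-popescu}, which was proved away from the prime~$2$.

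The main obstacle will be the careful local bookkeeping in the ETNC formula: for each prime $v\in S_{ram}\setminus(S\cup T)$ one must explicitly identify the local factor produced by the reformulation of ETNC for $\Omega_{S,S'}^T$, and check that these factors really enter multiplicatively into the Fitting ideal so that their simultaneous disappearance under $S_{ram}\subseteq S\cup T$ produces a principal ideal generated precisely by $\Theta_S^T$, with correct integrality at the prime $2$. Once this bookkeeping is in place, the corollary follows formally.
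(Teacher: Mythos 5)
Your overall approach is the paper's: identify $\nabla_S^T(H)^-$ with the specialization $\Omega_{S,\emptyset}^T(H)$ of the generalized Ritter--Weiss module via Remark \ref{Omega-equiv} (which requires $S_{ram}\subseteq S\cup T$ and $S'=\emptyset$ simultaneously), then invoke the ETNC reformulation (Theorem \ref{Fitt-Omega}) with $S'=\emptyset$, noting $\Theta_{S,\emptyset}^T=\Theta_S^T$. This is precisely Corollary \ref{burns-kurihara-sano} followed by Corollary \ref{global-Burns-Kurihara-Sano}.

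However, several of your claims about how the hypotheses enter are off the mark. First, the condition that $T$ contain no wildly ramified primes away from $2$ has nothing to do with ``avoiding inverting $2$'' -- the ring $\mathbb{Z}[G]^-=\mathbb{Z}[1/2][G]/(1+c)$ already has $2$ inverted by construction, and the result is no better at $2$ than its predecessors. Rather, the condition ensures that for each odd prime $p$ one has $T\cap S_{wild}^p=\emptyset$, so that $S_{ram}\subseteq S\cup T$ forces $S_{wild}^p\subseteq S$ and Assumption \ref{assumption} is met with $S'=\emptyset$; the paper then applies the $p$-adic Corollary \ref{burns-kurihara-sano} for every odd $p$ and assembles. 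The actual improvement over \cite{gambheera-popescu} is that $T$ may now contain tamely ramified primes. Second, there is no intermediate formula of the shape $(\Theta_S^T)\cdot\prod_{v\in S_{ram}\setminus(S\cup T)}(\text{local factor})$ for $Fitt(\nabla_S^T(H)^-)$: Theorem \ref{Fitt-Omega} computes the Fitting ideal of $\Omega_{S,S'}^T(H)_p$ as the single principal ideal $(\Theta_{S,S'}^T)$, with extra Euler-like factors indexed by $v\in S'$, not by $S_{ram}\setminus(S\cup T)$; and when $S_{ram}\not\subseteq S\cup T$ the module $\Omega_{S,\emptyset}^T$ is simply not $\nabla_S^T(H)^-$, so the ``local bookkeeping'' you anticipate never arises. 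Third, the hypothesis $\mu(H)^T=1$ is part of Assumption \ref{assumption} and feeds into the Atsuta--Kataoka equivalence underlying Theorem \ref{Fitt-Omega}; it is not what gives $\nabla_S^T(H)^-$ finite projective dimension -- that is built into the Ritter--Weiss construction (a quadratic presentation) independently of $\mu(H)^T$.
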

Notice that here the conditions on $S,T$ are relaxed as now we allow $T$ to contain tamely ramified primes which was not an option in Theorem 1.14(2) of \cite{gambheera-popescu}. As a consequence, we also obtain the following extension (Corollary \ref{global-strong-Kurihara}) of the Kurihara's conjecture  proved in \cite{Dasgupta-Kakde} (check Theorem 3.5 of loc.cit).
\begin{theorem}\label{intro-global-strong-Kurihara}
    Suppose $\mu(H)^T$ is trivial and $T$ does not contain any wildly ramified primes away from 2. Then we have the following equality of ideals in $\mathbb{Z}[G]^-$. 
    $$Fitt_{\mathbb{Z}[G]^-}(Cl^{T}(H)^{\vee,-})= (\Theta_{S_{\infty}}^T)\prod_{v\in S_{ram}(H/F)\setminus T} (N(I_v), 1-e_{I_v}\sigma_v^{-1})$$
\end{theorem}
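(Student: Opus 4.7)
The plan is to deduce this corollary from Theorem \ref{intro-global-Burns-Kurihara-Sano} by choosing the auxiliary set $S$ judiciously and then invoking the standard global Ritter--Weiss exact sequence relating $\nabla_S^T(H)^-$ to the Pontryagin dual of the $T$-ray class group. This is essentially the argument used to deduce Theorem 3.5 of \cite{Dasgupta-Kakde} from the corresponding Burns--Kurihara--Sano statement, with the strengthened Theorem \ref{intro-global-Burns-Kurihara-Sano} substituted at the crucial input step.

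Concretely, I would set $S := S_{\infty} \cup S_{ram}(H/F)$. With this choice the containment $S_{ram}\subseteq S\cup T$ required by Theorem \ref{intro-global-Burns-Kurihara-Sano} is automatic, and the remaining hypotheses on $T$ and on $\mu(H)^T$ match those assumed here. That theorem then yields
$$Fitt_{\mathbb{Z}[G]^-}(\nabla_S^T(H)^-) = (\Theta_S^T).$$
Next, I would use the standard minus-part short exact sequence
$$0 \longrightarrow Cl^T(H)^{\vee,-} \longrightarrow \nabla_S^T(H)^- \longrightarrow \bigoplus_{v \in S_{ram}\setminus T} M_v \longrightarrow 0,$$
where $M_v$ is the local Ritter--Weiss contribution at the ramified prime $v$, together with the $L$-function factorization
$$\Theta_S^T \;=\; \Theta_{S_{\infty}}^T \cdot \prod_{v \in S_{ram}\setminus T}(1 - \sigma_v^{-1} e_{I_v}).$$
A prime-by-prime local calculation then shows that the Kurihara ideal $(N(I_v),\, 1 - e_{I_v}\sigma_v^{-1})$ is precisely the product of $Fitt_{\mathbb{Z}[G]^-}(M_v)$ with the principal Euler factor $(1-\sigma_v^{-1} e_{I_v})$, so that combining the two displays above produces the claimed formula.

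The main technical subtlety will be that Fitting ideals are not multiplicative along arbitrary short exact sequences. I would handle this either by checking that each $M_v$ has projective dimension at most $1$ over $\mathbb{Z}[G]^-$ (which is exactly where the hypothesis that $T$ contains no wildly ramified primes is used: the relevant local orders are then regular enough for $Fitt_{\mathbb{Z}[G]^-}(M_v)$ to be principal and to multiply cleanly with the Fitting ideals of the outer terms of the sequence), or by invoking the shifted Fitting ideal machinery of \cite{Atsuta-Kataoka-Fitt} already employed elsewhere in the paper. The principal novelty over Theorem 3.5 of \cite{Dasgupta-Kakde} lies entirely in the first step: Theorem \ref{intro-global-Burns-Kurihara-Sano} now permits $T$ to contain tamely ramified primes, and this relaxation transfers directly to the present corollary through the argument above.
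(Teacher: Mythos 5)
Your high-level strategy --- substitute the strengthened version of the Burns--Kurihara--Sano statement into the deduction that Dasgupta--Kakde carry out in their appendix B.2 --- is exactly what the paper does (it proves the $p$-part, Theorem \ref{strong-kurihara}, by rerunning appendix B.2 of \cite{Dasgupta-Kakde} with Corollary \ref{burns-kurihara-sano} replacing their Theorem 3.3, and then deduces the global Corollary \ref{global-strong-Kurihara} prime by prime). But the concrete details in your sketch do not go through.

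First, your choice $S := S_\infty\cup S_{ram}(H/F)$ violates the standing requirement that $S$ and $T$ be disjoint, precisely in the case the theorem adds (namely $T\cap S_{ram}\neq\emptyset$). You would need $S=S_\infty\cup(S_{ram}\setminus T)$, and the paper in fact works with a still smaller auxiliary set, $\Sigma=S_\infty\cup\{v\in S_{ram}:p\mid v\}\setminus T$, as that is what plugs into the appendix B.2 argument at each odd prime $p$. Second, and more seriously, the ``standard minus-part short exact sequence'' you invoke,
$$0\longrightarrow Cl^T(H)^{\vee,-}\longrightarrow\nabla_S^T(H)^-\longrightarrow\bigoplus_{v\in S_{ram}\setminus T}M_v\longrightarrow 0,$$
does not exist. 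The Ritter--Weiss sequence (see \eqref{nabla-sequences}, with $S'=\emptyset$) puts $Cl_S^T(H)^-$, the minus part of the $(S,T)$-ray class group, on the left --- not the Pontryagin dual $Cl^T(H)^{\vee,-}$ of the $T$-ray class group. Passing from $Cl_S^T(H)^-$ to $Cl^T(H)^{\vee,-}$ requires a nontrivial chain of steps (duality between $\nabla_S^T$ and the Selmer module $Sel_S^T$, a snake-lemma argument removing the $S$-support, and a case-by-case Fitting-ideal computation at ramified primes); that is the real content of Dasgupta--Kakde's appendix B.2 and is not captured by the single displayed sequence you write. Consequently, the final ``prime-by-prime local calculation'' matching $(N(I_v),\,1-e_{I_v}\sigma_v^{-1})$ to $Fitt_{\mathbb{Z}[G]^-}(M_v)\cdot(1-\sigma_v^{-1}e_{I_v})$ is asserted rather than proved, and the Kurihara ideal $(N(I_v),1-e_{I_v}\sigma_v^{-1})$ is typically non-principal, so it cannot arise as such a simple product. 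Your plan correctly identifies where the strengthened hypothesis on $T$ is to be fed in, but the reduction you describe replaces the actual argument with an exact sequence that does not hold and a local identity that would need substantial justification.
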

Again, observe that, unlike the original formulation of Kurihara's conjecture, now we are allowing $T$ to contain tamely ramified primes. We also remark that in the $p-$adic versions of the above two theorems (Theorem \ref{burns-kurihara-sano} and Theorem \ref{strong-kurihara}), the conditions on $S$ and $T$ are even more relaxed.\\

Organization of this paper is as follows. In section 2 we gather some algebraic tools which are needed to take projective limits of the Fitting ideals over the Iwasawa tower. We also record some results on the shifted Fitting ideals of the module $Z_{S'}$ above. In section 3, we work towards constructing and proving some basic results on the module $\Omega_{S,S'}^T(H)$ improving upon the work in \cite{Atsuta-Kataoka-ETNC}. In section 4 we use the ETNC and its reformulation to prove Theorem \ref{intro-global-Burns-Kurihara-Sano} and Theorem \ref{intro-global-strong-Kurihara}. In section 5 we take the projective limits of the results proved in section 4 to obtain our main theorem.

\subsection*{Acknowledgments} We would like to thank Cristian Popescu for several stimulating discussions. We would also like to thank Francesc Castella for bringing the work of Kataoka and Atsuta to our attention.

\section{Algebraic Preliminaries}

In this section we are establishing some general facts about projective systems of compact commutative rings. More specifically, we are computing the projective limits of their fractional ideals. 

\begin{remark}
Through out this section we assume that the rings  we are dealing with are commutative rings with identity, of characteristic 0 and no integer is a zero-divisor. All the rings that we see in later sections are going to satisfy these conditions.
\end{remark}

\begin{definition}
    Suppose $(R_n)_n$ is a sequence of commutative rings and $I_n$ be a fractional ideal of $R_n$ for each $n\in \mathbb{N}$. Then, we say that the sequence $(I_n)_n$ has bounded denominators if there exist a nonzero integer $N$ such that $N\cdot I_n \subseteq R_n$ for each $n\in \mathbb{N}$.
\end{definition}

\begin{lemma} \label{proj-ideal- lemma-1}
Suppose that for each $n\in\mathbb{N}$,  $R_n$ is a compact commutative  ring. Moreover, suppose that for each $n\in \mathbb{N}$, let $\{I_j^{(n)}\}_{j=1}^k$ be a family of fractional ideals of $R_n$. Assume that,

\begin{enumerate}
\item for each $j$, the sequence $(I_j^{(n)})_n$ has bounded denominators,
    \item for each $m, n\in\mathbb{N}, m>n$ there exist surjective transition maps $\pi_n^{m}:R_{m}\xrightarrow[]{} R_n$ such that $\pi_n^{n+1}(I_j^{(n+1)})=I_j^{(n)}$ for each $j$,
    \item for each $j$ the fractional ideal $I_j^{(\infty)}:=\varprojlim_n I_j^{(n)}$ of $R_{\infty}:=\varprojlim_n R_n$ is finitely generated.
\end{enumerate}
Then we have the following equality of fractional ideals of $R_{\infty}$.

$$\varprojlim_n \prod_{j=1}^k I_j^{(n)}= \prod_{j=1}^k I_j^{(\infty)}$$
\end{lemma}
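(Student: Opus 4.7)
The plan is to establish the nontrivial inclusion $\varprojlim_n \prod_j I_j^{(n)} \subseteq \prod_j I_j^{(\infty)}$; the reverse inclusion is immediate, since any element of $\prod_j I_j^{(\infty)}$ is a finite sum of products of coherent sequences $(a_{j,n})_n$ with $a_{j,n} \in I_j^{(n)}$, and such a sum of products projects into $\prod_j I_j^{(n)}$ at every level $n$. Using condition (1), I first fix $N \in \mathbb{Z}\setminus\{0\}$ with $N I_j^{(n)} \subseteq R_n$ for all $n$ and $j$; since $N$ is not a zero-divisor, replacing each $I_j^{(n)}$ by $N I_j^{(n)}$ and the target by $N^k \prod_j I_j^{(n)}$ reduces the problem to the case where the $I_j^{(n)}$ are genuine ideals of $R_n$.

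Next I would promote the finite-generation hypothesis from the limit level down to each finite level. By condition (2), together with the Mittag-Leffler style observation that any element of $I_j^{(n)}$ lifts, step by step along $\pi_m^{m+1}(I_j^{(m+1)}) = I_j^{(m)}$, to a coherent sequence in $I_j^{(\infty)}$, the canonical projection satisfies $\pi_n^\infty(I_j^{(\infty)}) = I_j^{(n)}$. Choosing generators $g_{j,1},\dots,g_{j,m_j} \in I_j^{(\infty)}$ provided by condition (3), their images $g_{j,i,n} := \pi_n^\infty(g_{j,i})$ therefore generate $I_j^{(n)}$ over $R_n$. Consequently both $\prod_j I_j^{(n)}$ and $\prod_j I_j^{(\infty)}$ are generated, over $R_n$ and $R_\infty$ respectively, by the same finite list of $M := \prod_j m_j$ products $h_{\vec i} := \prod_j g_{j,i_j}$, indexed by tuples $\vec i = (i_1,\dots,i_k)$, whose $n$-th projection is $h_{\vec i, n} := \prod_j g_{j,i_j,n}$.

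Fix any $x = (x_n)_n \in \varprojlim_n \prod_j I_j^{(n)}$ and define
\[
V_n := \bigl\{(r_{\vec i})_{\vec i} \in R_n^M : \textstyle\sum_{\vec i} r_{\vec i}\, h_{\vec i, n} = x_n\bigr\}.
\]
Each $V_n$ is nonempty, since the $h_{\vec i, n}$ generate $\prod_j I_j^{(n)}$, and it is the preimage of the closed singleton $\{x_n\}$ under the continuous map $R_n^M \to R_n$ sending $(r_{\vec i})$ to $\sum_{\vec i} r_{\vec i}\, h_{\vec i, n}$; hence $V_n$ is a closed, and therefore compact, subset of the compact Hausdorff space $R_n^M$. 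The transition maps carry $V_{n+1}$ into $V_n$, because $\pi_n^{n+1}(x_{n+1}) = x_n$ and $\pi_n^{n+1}(h_{\vec i, n+1}) = h_{\vec i, n}$. Thus $(V_n)_n$ is an inverse system of nonempty compact Hausdorff spaces with continuous transition maps, so $\varprojlim_n V_n$ is nonempty; choosing an element produces coherent sequences $r_{\vec i} = (r_{\vec i, n})_n \in R_\infty$ with $x = \sum_{\vec i} r_{\vec i}\, h_{\vec i} \in \prod_j I_j^{(\infty)}$, as required.

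The main subtle point I anticipate is the second step: ensuring that the finitely many generators of $I_j^{(\infty)}$ really do project onto generating sets for each $I_j^{(n)}$. This is precisely where the surjectivity hypothesis (2) is essential, for otherwise one could have $\pi_n^\infty(I_j^{(\infty)}) \subsetneq I_j^{(n)}$ and the sets $V_n$ built from the $h_{\vec i,n}$ could be empty. Once finite generation is propagated down, the compactness argument is standard, and the role of condition (1) is just to guarantee that passage to honest ideals of $R_n$ does not alter the problem.
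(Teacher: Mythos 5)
Your proof is correct, and while it ultimately rests on the same compactness principle as the paper's, the implementation is genuinely different in a way worth noting. The paper fixes $x = (x_n)_n \in \varprojlim_n \prod_j I_j^{(n)}$ and considers, inside the single compact space $R_\infty$, the nested closed sets $V_n := \pi_n^{-1}(x_n) \cap \prod_j I_j^{(\infty)}$; here $\prod_j I_j^{(\infty)}$ is closed because finite generation makes it the continuous image of the compact space $R_\infty^M$ under the ``sum of $R_\infty$-multiples of products of generators'' map, and $V_n \neq \emptyset$ follows (as in your proof) from $\pi_n\bigl(\prod_j I_j^{(\infty)}\bigr) = \prod_j I_j^{(n)}$. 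The nonempty intersection $\bigcap_n V_n$ then directly yields $y \in \prod_j I_j^{(\infty)}$ with $\pi_n(y) = x_n$ for all $n$, i.e. $y = x$. You instead first descend a finite generating set of each $I_j^{(\infty)}$ to generating sets of each $I_j^{(n)}$ (which takes the additional step of verifying that generators project to generators, on top of the surjectivity $\pi_n(I_j^{(\infty)}) = I_j^{(n)}$ that both arguments use), and then run the compactness argument on the coefficient tuples in $R_n^M$ rather than on elements of $R_\infty$. Both are valid; the paper's version is somewhat more economical, since it never needs the propagated generating sets and works in a single ambient compact space, while yours is more explicit in that it produces a coherent expression of $x$ in terms of the generators $h_{\vec i}$. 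Your preliminary reduction to honest ideals via multiplication by $N$ is harmless and correct but, as the paper's proof shows, not actually needed.
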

\begin{proof} Let $N\in \mathbb{N}$ be chosen such that $N\cdot I_j^{(n)}\subseteq R_n$ for each $n$ and $j$. Since $N$ is invertible in each ring $R_n$, we can extend the transition maps to $\pi_n^{n+1}:R_{n+1}[1/N]\xrightarrow[]{} R_n[1/N]$. Since for each $n$ and $j$ we have $I_j^{(n)}\subseteq R_n[1/N]$, the second condition makes sense. Moreover we have $N\cdot I_j^{(\infty)}\subseteq R_{\infty}$ and $N$ is invertible in $R_{\infty}$. Hence, $I_j^{(\infty)}$ considered in the third condition is indeed a fractional ideal of $R_{\infty}$.\\

 Let $J_n=\prod_{j=1}^k I_j^{(n)}$ and $J_{\infty}=\varprojlim_n J_n$. It is easy to see that $\prod_{j=1}^k I_j^{(\infty)} \subseteq J_{\infty}$. For the other inclusion, let $x=(x_n)_n\in J_{\infty}$. Now, let $\pi_n:R_{\infty}\xrightarrow[]{} R_n$ be the obvious projection map. Define $V_n:=\pi_n^{-1}(x_n)\cap \prod_{j=1}^k I_j^{(\infty)}$. Now, by the second condition, $\pi_n(\prod_{j=1}^k I_j^{(\infty)})=J_n$ and hence, $V_n$ is nonempty. On the other hand, continuity of $\pi_n$ and the fact that each $I_j^{(\infty)}$ is finitely generated implies that $V_n$ is a closed subset of $R_{\infty}$. Moreover, observe that for each $n$ we have $V_{n+1}\subseteq V_n$. Now, as each $R_n$ is compact, so is $R_{\infty}$. This implies that $V:=\cap_{n=1}^{\infty} V_n$ is nonempty. So, if $y\in V$, by the definition of $V$, we have $\pi_n(y)=x_n$. Hence, we have $x=y\in V\subseteq \prod_{j=1}^k I_j^{(\infty)}$ as desired.   
\end{proof}
Now we prove the following variation of the previous lemma for two sequences of fractional ideals with slightly different conditions on the fractional ideals and the rings. 

\begin{lemma}\label{Proj-ideal-lemma}
  Suppose $(R_n)_n$ is a system of compact Noetherian rings with surjective transition maps $\pi_n^{m}:R_{m}\xrightarrow[]{} R_n$ when $m>n$. Let $(I_n)_n$ and $(J_n)_n$ be sequences of fractional ideals of $R_n$ with bounded denominators where $\pi_n^{n+1}(J_{n+1})\subseteq J_n$ for each $n\in\mathbb{N}$. Moreover, assume that $I_n=(x_1^{(n)}, x_2^{(n)}, \text{ ... } , x_k^{(n)})$ and $\pi_n^{n+1}(x_i^{(n+1)})=x_i^{(n)}$ for each $n$ and $i$. Then we have the following equality of fractional ideals of $R_{\infty}:=\varprojlim_n R_n$.

  $$\varprojlim_n (I_n J_n)=(\varprojlim_n I_n)(\varprojlim_n J_n)$$
\end{lemma}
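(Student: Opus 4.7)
The plan is to establish the two inclusions separately. Writing $I_\infty := \varprojlim_n I_n$ and $J_\infty := \varprojlim_n J_n$ (both fractional ideals of $R_\infty$, via the same argument that extends the transition maps to $R_n[1/N]$ as in the proof of Lemma \ref{proj-ideal- lemma-1}), the inclusion $I_\infty J_\infty \subseteq \varprojlim_n (I_n J_n)$ is immediate: any finite sum $\sum_j a_j b_j$ with $a_j = (a_j^{(n)})_n \in I_\infty$ and $b_j = (b_j^{(n)})_n \in J_\infty$ projects at level $n$ to $\sum_j a_j^{(n)} b_j^{(n)} \in I_n J_n$, and the compatibility with the transition maps is automatic since multiplication commutes with them.

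For the reverse inclusion, take $z = (z_n)_n \in \varprojlim_n (I_n J_n)$. Since $I_n = (x_1^{(n)}, \ldots, x_k^{(n)})$ and $J_n$ is an $R_n$-submodule of $R_n[1/N]$, every element of $I_n J_n$ is of the form $\sum_i x_i^{(n)} y_i$ with $y_i \in J_n$. So for each $n$ one can choose $y_i^{(n)} \in J_n$ with $z_n = \sum_i x_i^{(n)} y_i^{(n)}$; but these choices are a priori incompatible across $n$. To force compatibility, I would define
$$A_n := \Bigl\{(y_1, \ldots, y_k) \in J_n^k \, : \, \sum_{i=1}^k x_i^{(n)} y_i = z_n \Bigr\}.$$
Each $A_n$ is nonempty, and the hypotheses $\pi_n^{n+1}(x_i^{(n+1)}) = x_i^{(n)}$, $\pi_n^{n+1}(J_{n+1}) \subseteq J_n$, and $\pi_n^{n+1}(z_{n+1}) = z_n$ together imply that $\pi_n^{n+1}$ restricts to a well-defined map $A_{n+1} \to A_n$.

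The key step is then to show that $\varprojlim_n A_n$ is nonempty by the usual compactness argument. Because $R_n$ is compact and Noetherian and $N J_n \subseteq R_n$, the submodule $J_n$ is a finitely generated $R_n$-submodule of $(1/N) R_n \cong R_n$, hence compact as the continuous image of some $R_n^r$; thus $J_n^k$ is compact. Inside it, $A_n$ is the preimage of $\{z_n\}$ under the continuous map $(y_1, \ldots, y_k) \mapsto \sum_i x_i^{(n)} y_i$, so $A_n$ is a nonempty closed (hence compact Hausdorff) subset, and the transition maps between these sets are continuous. Standard inverse-limit-of-nonempty-compacta thus gives $\varprojlim_n A_n \neq \emptyset$. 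Choosing $(y_1, \ldots, y_k) \in \varprojlim_n A_n$ yields $y_i = (y_i^{(n)})_n \in J_\infty$ and $x_i^{(\infty)} := (x_i^{(n)})_n \in I_\infty$ with $z = \sum_i x_i^{(\infty)} y_i \in I_\infty J_\infty$, finishing the proof.

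The main obstacle is really the one topological point in the middle: checking that $J_n$ is a compact closed subset of $R_n[1/N]$ and that the $A_n$ are well-behaved enough for the Mittag-Leffler-type argument. Once the compact Noetherian hypothesis is used to pin down that finitely generated fractional ideals are compact subsets and that the defining equation cuts out a closed subset, the rest of the argument is formal. The hypothesis that the generators $x_i^{(n)}$ come from a compatible system (so that they lift to honest elements of $I_\infty$) and the one-sided condition $\pi_n^{n+1}(J_{n+1}) \subseteq J_n$ are exactly what is needed to propagate the compatibility through the inverse limit.
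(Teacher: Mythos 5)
Your proof is correct, and it takes a genuinely different (though closely related) route from the paper's. The paper proves this lemma by a nested-subsequence argument: from the (a priori incompatible) expressions $y_n=\sum_i x_i^{(n)}u_i^{(n)}$, it extracts a convergent subsequence in $J_1^k$ to get $\tilde u_i^{(1)}$, then a further subsequence to get $\tilde u_i^{(2)}$, and so on, building the compatible system by hand. You instead package everything into the fiber sets $A_n\subseteq J_n^k$ and invoke the standard theorem that an inverse limit of nonempty compact Hausdorff spaces with continuous transition maps is nonempty. Your approach is cleaner, avoids the bookkeeping of iterated subsequences, and does not secretly rely on sequential compactness (which agrees with topological compactness in the metrizable, e.g.\ profinite, setting that the paper cares about, but not in general). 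Interestingly, your argument is closer in spirit to the paper's proof of the \emph{previous} lemma, Lemma~\ref{proj-ideal- lemma-1}, which also uses the finite intersection property rather than sequences. The one small point worth making explicit is the verification that $A_n$ is nonempty: since $z_n\in I_nJ_n$, one writes $z_n=\sum_j a_jb_j$ with $a_j\in I_n$, $b_j\in J_n$, expands each $a_j$ in the generators $x_i^{(n)}$, and regroups to obtain $z_n=\sum_i x_i^{(n)}y_i^{(n)}$ with $y_i^{(n)}\in J_n$; you assert this but it is worth the one line. Everything else — the compactness of $J_n$ via Noetherianity and continuity, the closedness of $A_n$ via Hausdorffness of $(1/N^2)R_n$, and the compatibility of the transition maps using $\pi_n^{n+1}(x_i^{(n+1)})=x_i^{(n)}$ and $\pi_n^{n+1}(J_{n+1})\subseteq J_n$ — is sound.
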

\begin{proof}
    As explained in the beginning of the proof of previous lemma, since the given sequences of fractional ideals have bounded denominators, via extended transition maps, the given conditions are well defined.\\
    
    It is obvious that $(\varprojlim_n I_n)(\varprojlim_n J_n) \subseteq \varprojlim_n (I_n J_n)$. Now, let $y=(y_n)_n \in \varprojlim_n (I_n J_n)$. Then, for each $n\in\mathbb{N}$, $y_n=\sum_{i=1}^k x_i^{(n)} u_i^{(n)}$ for some $k\in\mathbb{N}$, where each $u_i^{(n)} \in J_n$. Now, we define, new elements $\Tilde{u}_i^{(n)}\in J_n$ in the following way. First let us do this task for $n=1$. Consider the sequence $$(z_n^{(1)})_n=((\pi_1^n(u_1^{(n)}),\pi_1^n(u_2^{(n)}), \text{ ... } , \pi_1^n(u_k^{(n)})))_n$$ in $I_1^k$. Since, $R_1$ is Noetherian and compact, $J_1$ is finitely generated and hence, compact. So, the above sequence has a convergent sub sequence, say $(z_{n_j}^{(1)})_j$. Now, for each $i$ define $$\Tilde{u}_i^{(1)}:=\lim_{j\to \infty} \pi_1^{n_j}(u_i^{(n_j)}).$$ Observe that for all $j$, $y_1=\pi_1^{n_j}(y_{n_j})$. Therefore, 
    $$y_1=\lim_{j\to\infty} \pi_1^{n_j}(y_{n_j})=\sum_{i=1}^k \lim_{j\to\infty} \pi_1^{n_j}(x_i^{(n_j)}u_i^{(n_j)})=\sum_{i=1}^k x_i^{(1)}\Tilde{u}_i^{(1)}$$
    Now, for each $i$, let us define $\Tilde{u}_i^{(2)}$. Consider the following sequence,
    $$(z_{n_j}^{(2)})_j=((\pi_2^{n_j}(u_1^{({n_j})}),\pi_2^{n_j}(u_2^{({n_j})}), \text{ ... } , \pi_2^{n_j}(u_k^{({n_j})})))_{n_j}$$ in $I_2^k$. As before, by the compactness of $I_2^k$, we can pick a further subsequence of $(n_j)_j$, say $(n_{j_r})_r$ such that $(z_{n_{j_r}}^{(2)})_r$ is convergent. Now, define $$\Tilde{u}_i^{(2)}:=\lim_{r\to \infty} \pi_2^{n_{j_r}}(u_i^{(n_{j_r})})$$ Following the same steps as before, we can prove that,
    $$y_2=\sum_{i=1}^k x_i^{(2)}\Tilde{u}_i^{(2)}$$
    Furthermore, observe that, $$\pi_1^2(\Tilde{u}_i^{(2)})=\pi_1^2(\lim_{r\to \infty} \pi_2^{n_{j_r}}(u_i^{(n_{j_r})}))=\lim_{r\to \infty} \pi_1^{n_{j_r}}(u_i^{(n_{j_r})})=\Tilde{u}_i^{(1)}$$
    Following the same method, by picking further subsequences of  $(n_{j_r})_r$, for each $i$ one can define the family of sequences, $(\Tilde{u}_i^{(n)})_n$ such that, for each $n$ and $i$, we have $\pi_n^{n+1}(\Tilde{u}_i^{(n+1)})=\Tilde{u}_i^{(n)}$ and $y_n=\sum_{i=1}^k x_i^{(n)}\Tilde{u}_i^{(n)}$. Therefore, $u_i=(\Tilde{u}_i^{(n)})_n \in \varprojlim_n J_n$ and $y=\sum_{i=1}^k x_i u_i$ where $x_i=(x_i^{(n)})_n\in \varprojlim_n I_n$. Hence, $y\in (\varprojlim_n I_n)(\varprojlim_n J_n)$ as desired. This completes the proof. 
\end{proof}

The next theorem we prove will be applied in later sections when we take projective limits of fractional ideals of rings endowed with the $p-$adic topology.

\begin{lemma}\label{ignore-generators}
  Let $p$ be a prime. Suppose $(R_n)_n$ is a system of compact Noetherian rings with surjective transition maps $\pi_n^{m}:R_{m}\xrightarrow[]{} R_n$. Let $(I_n)_n$ and $(J_n)_n$ be sequences of fractional ideals of $R_n$ with bounded denominators such that $\pi_n^{n+1}(I_{n+1})\subseteq I_n$ and $\pi_n^{n+1}(J_{n+1})\subseteq J_n$. Assume that for each $n\in\mathbb{N}$, $$I_n=(x_1^{(n)}, x_2^{(n)}, \text{ ... } , x_k^{(n)} , y_1^{(n)}, y_2^{(n)}, \text{ ... } , y_l^{(n)})$$ where $\pi_n^{n+1}(x_i^{(n+1)})=x_i^{(n)}$ and $\pi_n^{n+1}(y_i^{(n+1)})=p\cdot y_i^{(n)}$for each $i$. Let us also assume that $\lim_{k\to\infty} p^k=0$ in each $R_n$. Then we have the following equality of fractional ideals of $R_{\infty}:=\varprojlim_n R_n$. 
  $$\varprojlim_n (I_n J_n)=\varprojlim_n (\Tilde{I}_n J_n)$$
  where $\Tilde{I}_n=(x_1^{(n)}, x_2^{(n)}, \text{ ... } , x_k^{(n)} )$.
\end{lemma}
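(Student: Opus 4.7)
The plan is to establish the nontrivial inclusion $\varprojlim_n(I_n J_n) \subseteq \varprojlim_n(\tilde{I}_n J_n)$; the reverse containment is immediate since $\tilde{I}_n \subseteq I_n$. In fact, I would prove the stronger pointwise statement that for any $z = (z_n)_n \in \varprojlim_n(I_n J_n)$ and any fixed $n$, one actually has $z_n \in \tilde{I}_n J_n$. Once this is done, the compatibility of the sequence $(z_n)_n$ under the transition maps is automatic, and because $\pi_n^{n+1}(\tilde{I}_{n+1} J_{n+1}) \subseteq \tilde{I}_n J_n$ (which follows from $\pi_n^{n+1}(x_i^{(n+1)}) = x_i^{(n)}$ and $\pi_n^{n+1}(J_{n+1}) \subseteq J_n$), we may assemble the $z_n$ into an element of $\varprojlim_n(\tilde{I}_n J_n)$.

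To prove $z_n \in \tilde{I}_n J_n$ for a fixed $n$, I would exploit the compatibility $z_n = \pi_n^m(z_m)$ for all $m \ge n$. For each such $m$, write $z_m = \sum_{i=1}^k x_i^{(m)} a_i^{(m)} + \sum_{j=1}^l y_j^{(m)} b_j^{(m)}$ with $a_i^{(m)}, b_j^{(m)} \in J_m$. Iterating the compatibility relation $\pi_n^{n+1}(y_j^{(n+1)}) = p\, y_j^{(n)}$ gives $\pi_n^m(y_j^{(m)}) = p^{m-n} y_j^{(n)}$, so projecting down to level $n$ yields
$$ z_n \;=\; \sum_{i=1}^k x_i^{(n)}\, \pi_n^m(a_i^{(m)}) \;+\; p^{m-n} \sum_{j=1}^l y_j^{(n)}\, \pi_n^m(b_j^{(m)}). $$

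The crucial step is then a compactness/diagonal extraction argument in the spirit of Lemma \ref{Proj-ideal-lemma}. Since $J_n$ has bounded denominator and $R_n$ is compact Noetherian, $N \cdot J_n$ is a finitely generated ideal of $R_n$, hence closed and therefore compact, and so $J_n$ (and thus $J_n^{k+l}$) is compact as well. Because $\pi_n^m(J_m) \subseteq J_n$, the sequence of tuples $\bigl(\pi_n^m(a_i^{(m)}),\, \pi_n^m(b_j^{(m)})\bigr)_{m \ge n}$ lies in $J_n^{k+l}$ and admits a convergent subsequence along some $m_r \to \infty$ with limits $\alpha_i, \beta_j \in J_n$. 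By the standing hypothesis $p^{m_r - n} \to 0$ in $R_n$, so passing to the limit kills the $y$-contribution and leaves
$$ z_n \;=\; \sum_{i=1}^k x_i^{(n)}\, \alpha_i \;\in\; \tilde{I}_n J_n, $$
as required.

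The main technical obstacle is verifying the compactness of $J_n$ in the $p$-adic topology (needed to extract a convergent subsequence), which is secured by the combination of Noetherianity, compactness of $R_n$, and the bounded-denominator assumption. Once that is in hand, the entire lemma reduces to the observation that the auxiliary generators $y_j^{(n)}$ arise from level $m$ only after being multiplied by $p^{m-n}$, and this dilution factor makes them vanish in the inverse limit; this ``$p$-adic dilution'' is the genuine mechanism of the lemma and is the only place where the $p$-adic topology of the $R_n$ is actually used.
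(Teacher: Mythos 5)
Your proposal is correct and follows essentially the same strategy as the paper: project the representations of $z_m$ down to level $n$, observe that the $y$-coefficients acquire a factor $p^{m-n}$, use compactness of $J_n^k$ (via boundedness of denominators and Noetherianity of the compact ring $R_n$) to extract convergent coefficient sequences, and let the $p$-adic dilution factor kill the auxiliary generators in the limit. The only difference is a small cosmetic one: you track the full tuple of coefficients in $J_n^{k+l}$ rather than just the $k$-tuple of $x$-coefficients as the paper does, which changes nothing in substance.
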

\begin{proof}
   Since $\Tilde{I}_n \subseteq I_n$ for each $n$, we have $\varprojlim_n (\Tilde{I}_n J_n) \subseteq \varprojlim_n (I_n J_n)$. Now, let $y=(y_n)_n \in \varprojlim_n (I_n J_n)$. Then, for each $m$, we have $$y_m=\sum_{i=1}^k x_i^{(m)}u_i^{(m)}+\sum_{i=1}^l y_i^{(m)}v_i^{(m)}$$ where $u_i^{(m)}, v_i^{(m)}\in J_m$ for each $i$ and $m$. Now, for each $m>n$, we have $$y_n=\pi_n^m(y_m)=\sum_{i=1}^k x_i^{(n)}\pi_n^m(u_i^{(m)})+\sum_{i=1}^l p^{m-n} y_i^{(n)}\pi_n^m (v_i^{(m)})$$
Now, for a fixed $n$ consider the sequence,
$$(z_m)_{m>n}=(\pi_n^m(u_i^{(m)}))_{i=1}^k$$
in the compact space $I_n^k$. We can choose a convergent subsequence, say $(z_{m_j})_j$. Let $\Tilde{u}_i^{(n)}=\lim_{j\to \infty} \pi_n^{m_j}(u_i^{(m_j)})\in J_n$. Now, observe that $$\lim_{j\to \infty} p^{m_j-n} y_i^{(n)}\pi_n^{m_j} (v_i^{(m_j)})=0$$ Therefore, we have that, for each $n$,
$$y_n=\lim_{j\to \infty} \pi_n^{m_j} (y_{m_j})=\sum_{i=1}^k x_i^{(n)} \Tilde{u}_i^{(n)}\in \Tilde{I}_n J_n$$ Therefore, $y\in \varprojlim_n (\Tilde{I}_n J_n)$ as desired.
\end{proof}

Now, we recall some of the results of Kataoka and Atsuta on shifted Fitting ideals of some modules over a group ring. For basic definitions of Fitting ideals and their shifts, see chapter 2 of \cite{Atsuta-Kataoka-Fitt}.\\

Let $G,D$ and $I$ be finite abelian groups such that $I\subseteq D\subseteq G$ and $D/I$ is cyclic. Let us consider the $\mathbb{Z}[G]$-module $A=\mathbb{Z}[G]/(i-1,1-\sigma^{-1}+|I|; i\in I)$ where $\sigma\in D$ is a lift of a fixed generator of the group $D/I$. Observe that the module $A$ does not depend on this lift. In \cite{Atsuta-Kataoka-Fitt} the authors computed the first shifted Fitting ideal $Fitt_{\mathbb{Z}[G]}^{[1]}(A)$. Let us start by recalling some definitions that are needed to state their result.

\begin{definition}
    For any finite group $H$, define the following elements of the group ring $\mathbb{Q}[H]$.
    $$N(H):=\sum_{h\in H}h$$
    $$e_H:=\frac{1}{|H|}N(H)$$
\end{definition}

\begin{definition}
    Let $I=I_1\times I_2 \times \text{ ... } I_s$ where each $I_j=\langle i_j \rangle$ is cyclic. Then, define the following ideals of $\mathbb{Z}[G]$. 
    $$Z_i:=\left(\prod_{j=1}^{s-i} N(I_{l_j}); 1\leq l_1 < l_2 < \text{ ... } , < l_{s-i}\leq s \right)$$
     $$\Delta D:=Ker(\mathbb{Z}[G]\xrightarrow[]{} \mathbb{Z}[G/D])$$
Moreover we define the following ideal.
$$\mathcal{J}(I):=\sum_{i=1}^s Z_i \Delta D^{i-1} $$
\end{definition}
In \cite{Atsuta-Kataoka-Fitt} the authors proved that $\mathcal{J}(I)$ is independent on the cyclic decomposition of $I$ (check the chapter 4 of \cite{Atsuta-Kataoka-Fitt}), which justifies the notation.\\

For the applications of the next chapters we would like to keep track of the generators of $\mathcal{J}(I)$. Observe that,
$$\Delta D=(\sigma-1, i-1; i\in I)=(\sigma-1, i_j-1; 1\leq j \leq s).$$

Therefore, it is easily seen that we can write $\mathcal{J}(I)$ in terms of its generators in the following way.

\begin{lemma} \label{Shifted-Fitt-generators}
    Consider the following sets of tuples of positive integers.
    $$L_i=\{(l_1,l_2, \text{ ... } l_{s-i}); 1\leq l_1 < l_2 < \text{ ... } , < l_{s-i}\leq s\}$$
    $$M_j=\{(m_1,m_2, \text{ ... } m_{j}); 1\leq m_1 \leq m_2 \leq \text{ ... } , \leq m_{j}\leq s\}$$
For, tuples $\lambda=(l_1,l_2, \text{ ... } l_{s-i})\in L_i$ and $\mu=(m_1,m_2, \text{ ... } m_{j})\in M_j$, define the element $e(\lambda,\mu)\in \mathbb{Z}[G]$,
$$e(\lambda,\mu):=\prod_{j=1}^{s-i} N(I_{l_j})\prod_{r=1}^{j}(i_{m_r}-1)$$
Then, we have the following.
$$J=(e(\lambda,\mu)(\sigma-1)^{i-1-j}; \lambda\in L_i , \mu\in M_j , j\leq i-1<s)$$
\end{lemma}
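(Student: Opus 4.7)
The plan is simply to unpack the definitions. By construction $\mathcal{J}(I) = \sum_{i=1}^{s} Z_i \cdot \Delta D^{i-1}$, so the task reduces to writing down the generators of each factor explicitly and multiplying them out. The generators of $Z_i$ are built into its definition: they are indexed by the strictly increasing tuples $\lambda = (l_1, \ldots, l_{s-i}) \in L_i$, with corresponding generator $\prod_{r=1}^{s-i} N(I_{l_r})$. The main step is therefore to identify the generators of $\Delta D^{i-1}$. Combining the displayed observation $\Delta D = (\sigma - 1, i_1 - 1, \ldots, i_s - 1)$ immediately preceding the lemma with the standard fact that for any ideal $J = (g_1, \ldots, g_n)$ in a commutative ring the power $J^k$ is generated by the degree-$k$ monomials in $g_1, \ldots, g_n$, I obtain that $\Delta D^{i-1}$ is generated by all products of $i-1$ factors drawn with repetition from $\{\sigma-1, i_1 - 1, \ldots, i_s - 1\}$. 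Isolating the $(\sigma-1)$ factors and sorting the remaining $(i_\bullet - 1)$ factors by non-decreasing index puts each such monomial in the normal form $(\sigma-1)^{i-1-j}\prod_{r=1}^{j}(i_{m_r}-1)$ with $0 \leq j \leq i-1$ and $\mu = (m_1, \ldots, m_j) \in M_j$; this matches exactly the index set appearing in the claimed generating family.

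Multiplying a generator of $Z_i$ by such a normal-form generator of $\Delta D^{i-1}$ produces precisely $e(\lambda, \mu)(\sigma-1)^{i-1-j}$, and summing over $i = 1, \ldots, s$ (equivalently, over $i-1 < s$) yields the stated formula. The only bookkeeping to verify is at the two boundary cases: when $i=1$ the only pair is $(j, \mu) = (0, \emptyset)$, with $(\sigma-1)^0$ and the empty product over $\mu$ both equal to $1$, so the contribution is exactly $Z_1$; when $i=s$ the tuple $\lambda \in L_s$ is empty and $Z_s = \mathbb{Z}[G]$, so the contribution is exactly $\Delta D^{s-1}$. Both are handled uniformly by the empty-product convention. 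I do not foresee any genuine obstacle here: the lemma is really a notational repackaging of the definition of $\mathcal{J}(I)$ combined with the elementary description of powers of a finitely generated ideal, and the proof should be a short verification.
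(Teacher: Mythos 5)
Your proposal is correct and takes the same route the paper intends: the paper gives no explicit proof beyond the observation that $\Delta D = (\sigma-1, i_j-1;\ 1\le j\le s)$ followed by the phrase ``it is easily seen,'' and your write-up fills in exactly that omitted bookkeeping — expand each $Z_i$ by its defining generators, expand $\Delta D^{i-1}$ via degree-$(i-1)$ monomials in the listed generators, normalize each monomial by isolating the $(\sigma-1)$ factors, and multiply. The boundary cases $i=1$ and $i=s$ are handled correctly by the empty-product convention, matching the index range $j\le i-1<s$.
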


The following is the Theorem 4.1 in \cite{Atsuta-Kataoka-Fitt}.
\begin{theorem}\label{A-shifted-Fitt}
   Let $h=1-e_I\sigma^{-1}+N(I)\in \mathbb{Q}[G]$. Then, we have the following equality of fractional ideals of $\mathbb{Z}[G]$.
    $$(h)Fitt_{\mathbb{Z}[G]}^{[1]}(A)=(N(I),(1-e_I\sigma^{-1})\mathcal{J}(I))$$
\end{theorem}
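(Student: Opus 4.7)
The plan is to compute $\mathrm{Fitt}^{[1]}_{\mathbb{Z}[G]}(A)$ directly from an explicit free presentation of $A$ and then analyze its first syzygy module carefully. We begin from the evident presentation
$$\mathbb{Z}[G]^{s+1} \xrightarrow{\phi} \mathbb{Z}[G] \longrightarrow A \longrightarrow 0,$$
where $\phi$ sends the basis elements $e_1,\dots,e_s,e_{s+1}$ to $i_1-1,\dots,i_s-1,\,1-\sigma^{-1}+|I|$ respectively. The shifted Fitting ideal $\mathrm{Fitt}^{[1]}_{\mathbb{Z}[G]}(A)$ is, up to the correction factor $h$ that compensates for the ambiguity of the syzygy, the zeroth Fitting ideal of the first syzygy $K=\ker(\phi)$. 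So the theorem will follow once we identify enough generators of $K$ and verify that multiplying by $h$ produces exactly the right-hand side.

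The first technical step is to pin down a generating set for $K$. The relations split into three natural families: (i) Koszul-type relations $(i_k-1)\,e_j - (i_j-1)\,e_k$ coming from the commutativity of the generators of $I$; (ii) norm relations $N(I_j)\,e_j$, valid because $(i_j-1)N(I_j) = 0$ in $\mathbb{Z}[I_j]$; and (iii) mixed relations $(1-\sigma^{-1}+|I|)\,e_j - (i_j-1)\,e_{s+1}$ for each $j \le s$, together with the higher-order relation capturing that $N(I)\cdot(1-\sigma^{-1}+|I|)$ equals $N(I)(1-\sigma^{-1}) + |I|\cdot N(I)$ and therefore lies in the image of $\phi$. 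That these generate all of $K$ is verified by realizing $A$ as the quotient of $\mathbb{Z}[G/I]$ by $1-\bar\sigma^{-1}+|I|$, where $\bar\sigma$ denotes the image of $\sigma$, and lifting relations along the surjection $\mathbb{Z}[G]\to\mathbb{Z}[G/I]$.

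Given these generators, standard row-column manipulations on the resulting relation matrix produce the Fitting ideal of $K$. Eliminating $e_{s+1}$ via family (iii) introduces factors of $(\sigma-1)^{i-1-j}$, while iteratively combining the Koszul relations of family (i) with the norm relations of family (ii) produces the generators
$$\prod_{j=1}^{s-i} N(I_{l_j})\,\prod_{r=1}^{j}(i_{m_r}-1)$$
of Lemma \ref{Shifted-Fitt-generators}. After multiplying the resulting ideal by $h = 1-e_I\sigma^{-1}+N(I)$, which acts as $(1-e_I)$ on the $(1-e_I)$-part of $\mathbb{Q}[G]$ and as $e_I(1-\sigma^{-1})+N(I)$ on the $e_I$-part, one checks directly that $(h)\mathrm{Fitt}^{[1]}(A)$ and $(N(I),(1-e_I\sigma^{-1})\mathcal{J}(I))$ agree as fractional ideals of $\mathbb{Z}[G]$.

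The main obstacle, I expect, will be twofold: first, verifying that families (i)--(iii) above generate the entire first syzygy (not merely a proper subideal), and second, the combinatorial bookkeeping needed to match the Fitting ideal of $K$ with $\mathcal{J}(I)$. Both difficulties are best handled by inducting on the number $s$ of cyclic factors of $I$. The base case $s=1$ is a direct calculation using the periodic free resolution of $\mathbb{Z}$ over $\mathbb{Z}[I_1]$ and the action of $\sigma$. For the inductive step, one uses the short exact sequence associated to the quotient $I\twoheadrightarrow I/I_s$, which links $A$ to its analogue for $I' = I_1\times\cdots\times I_{s-1}$ and explains the appearance of the new factors $N(I_s)$ and $(i_s-1)$ in the generators of $\mathcal{J}(I)$.
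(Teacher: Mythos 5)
The paper does not actually prove this statement: it cites it directly as Theorem~4.1 of Atsuta--Kataoka \cite{Atsuta-Kataoka-Fitt}, so there is no in-paper proof to compare yours against. Evaluating your proposal on its own terms, there is a genuine gap at the very first structural step. You assert that $\mathrm{Fitt}^{[1]}_{\mathbb{Z}[G]}(A)$ equals, up to the factor $h$, the zeroth Fitting ideal of $K=\ker(\phi)$ in the free presentation $\mathbb{Z}[G]^{s+1}\xrightarrow{\phi}\mathbb{Z}[G]\to A\to 0$, describing $h$ as ``compensating for the ambiguity of the syzygy.'' This is not what the shifted Fitting ideal is. In Kataoka's framework (which is the one the paper explicitly uses, referring to chapter~2 of \cite{Atsuta-Kataoka-Fitt}), $\mathrm{Fitt}^{[1]}(A)$ is defined via an exact sequence $0\to N\to P\to A\to 0$ in which the middle term $P$ is a \emph{torsion} module of projective dimension $\le 1$ (a ``quadratic'' module), and then $\mathrm{Fitt}^{[1]}(A)=\mathrm{Fitt}(P)^{-1}\mathrm{Fitt}(N)$. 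The free module $\mathbb{Z}[G]$ in your presentation is not torsion, so $K$ is not the module that the definition of $\mathrm{Fitt}^{[1]}$ is computing a Fitting ideal of, and there is no reason a priori that $\mathrm{Fitt}(K)$ and $\mathrm{Fitt}^{[1]}(A)$ should differ by a fixed principal correction factor at all, let alone by $h$. You would need to actually \emph{prove} the identity $h\cdot\mathrm{Fitt}^{[1]}(A)=\mathrm{Fitt}(K)$ before the rest of the argument could even begin, and no argument for it is given. (It is worth noting that $h$ does have a concrete meaning --- one checks that $h=(1-e_I)+e_I(1-\sigma^{-1}+|I|)$ is exactly a generator of $\mathrm{Fitt}_{\mathbb{Q}[G]}(A\otimes\mathbb{Q})$ --- but this still does not tie $h$ to the second syzygy of a free resolution in the way your sketch assumes.)

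Beyond this foundational issue, the rest of the proposal is a sketch, not a proof: the claim that families (i)--(iii) generate $K$ is ``verified by realizing $A$ as a quotient of $\mathbb{Z}[G/I]$'' without further argument; the row/column manipulations producing the generators $e(\lambda,\mu)(\sigma-1)^{i-1-j}$ of Lemma~\ref{Shifted-Fitt-generators} are not exhibited; and the final step is the assertion that ``one checks directly'' that the two sides agree. Since the combinatorics of $\mathcal{J}(I)$ (in particular the nontrivial fact that it is independent of the chosen cyclic decomposition of $I$, which is itself a theorem in \cite{Atsuta-Kataoka-Fitt}) is where most of the difficulty lives, deferring all of it does not constitute a proof. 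A viable route closer to the actual definition would start from the sequence $0\to N\to P\to A\to 0$ with $P=\mathbb{Z}[G]/(1-\sigma^{-1}+|I|)$ (a torsion module of pd~$\le 1$, since $1-\sigma^{-1}+|I|$ is a non-zero-divisor) and $N$ the image of the augmentation ideal $\Delta I$ in $P$, reducing the theorem to computing $\mathrm{Fitt}_{\mathbb{Z}[G]}(N)$ --- and it is precisely that computation that requires the careful induction on $s$ and the combinatorial control of $\mathcal{J}(I)$ carried out in \cite{Atsuta-Kataoka-Fitt}.
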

It can be easily checked that $h\in \mathbb{Q}[G]$ is a non-zero divisor. Therefore, the above theorem determines $Fitt_{\mathbb{Z}[G]}^{[1]}(A)$.\\

In the rest of this section we introduce some notations to be used in later chapters.  If $G$ is a (pro)finite abelian group and $M$ is a $\Bbb Z[G]$--module, we endow its Pontrjagin dual
$$M^\vee:={\rm Hom}_{\Bbb Z}(M, \Bbb Q/\Bbb Z)$$
with the {\it covariant} $G$--actions $gf(x)=f(gx)$, for $g\in G$. Further, if $G$ contains a canonical element $c$ of order $2$ (usually a complex conjugation automorphism), then we let
$$M^-:=\frac{1}{2}(1-c)\cdot (M\otimes_{\Bbb Z}\Bbb Z[1/2]).$$
This is the $(-1)$--eigenspace of $M\otimes_{\Bbb Z}\Bbb Z[1/2]$ under the action of $c$. It has a natural structure of $\Bbb Z[1/2][G]^-$--module. If $p$ is an odd prime, then we let
$$M_p:=M\otimes_{\Bbb Z}\Bbb Z_p, \qquad M_p^-:=\frac{1}{2}(1-c)M_p=M^-\otimes_{\Bbb Z[1/2]}\Bbb Z_p,$$
viewed as modules over $\Bbb Z_p[G]$ and $\Bbb Z_p[G]^-$, respectively. Obviously, $(\ast\to \ast^-)$ and $(\ast\to \ast^-_p)$ are exact functors. Tacitly, we identify the rings $\Bbb Z_p[G]/(1+c)$ and $\Bbb Z_p[G]^-$, respectively  $\Bbb Z[1/2][G]/(1+c)$ and $\Bbb Z[1/2][G]^-$, via the isomorphism given by multiplication with $\frac{1}{2}(1-c)$.If $N$ is a $\Bbb Z_p[[G]]$--module, we also use the same notations as above
$$N^\vee:={\rm Hom}_{\Bbb Z_p}(N, \Bbb Q_p/\Bbb Z_p)$$
to denote its Pontryagin dual, always endowed with the covariant $G$--action.

\section{The modified Ritter-Weiss module }

In this chapter we construct a Galois module $\Omega_{S,S'}^T(H)$ that is a slight modification of the Ritter-Weiss modules considered in \cite{Dasgupta-Kakde} and its variant in \cite{Atsuta-Kataoka-ETNC}. The original construction of the module goes back to the work of Ritter and Weiss \cite{Ritter-Weiss}. It has been modified and generalized over the years by series of work due to Greither \cite{Greither-ETNC}, Dasgupta-Kakde \cite{Dasgupta-Kakde}, Kurihara \cite{Kurihara} and Atsuta-Kataoka \cite{Atsuta-Kataoka-ETNC}. Our construction recovers the modules in \cite{Dasgupta-Kakde} and \cite{Atsuta-Kataoka-ETNC} as special cases. \\

First, we recall some notations and definitions from \cite{Dasgupta-Kakde} and \cite{Atsuta-Kataoka-ETNC}. Let $H/F$ be an abelian extension of number fields with Galois group $G$ where $H$ is CM and $F$ is totally real. Moreover, let $S_\infty$ be the set of archimedean places in $F$. We fix three disjoint, finite sets of places $S, S'$ and $T$ of $F$, such that $S_\infty\subseteq S$. We denote the sets of places in $H$ above the places in $S, S', S_{\infty}$ and $T$ by $S_H, S_H', S_{\infty}(H)$ and $T_H$ respectively. When there is no risk of confusion we may drop $H$ from the notation. We define
$$\mathcal O_{H,S,T}^{\times}:= \{x\in H^\times ; {\rm ord}_w(x)= 0, \text{ for all }w\not\in S, \quad {\rm ord}_{w}(x-1)>0, \text{ for all } w\in T\},$$
where for a finite prime $w$ in $H$, we let ${\rm ord}_w$ be the normalized valuation of $H$ associated to $w$.

Moreover, for any two sets of places $\Sigma$ and $\Sigma'$ of $F$, $Cl^{\Sigma'}(H)$ denotes the ray--class group of $H$ of conductor $(\prod_{w\in \Sigma'_H}w)$ and $Cl^{\Sigma'}_{\Sigma}(H)$ denotes the quotient $Cl^{\Sigma'}(H)$ by the subgroup generated by the classes of ideals in $\Sigma_H \setminus S_\infty(H)$.\\

In what follows, if $w$ is a prime in $H$, then $H_w$ denotes the completion of $H$ in its $w$--adic topology. If $w$ is finite, then $\mathcal O_w$ denotes the ring of integeres in $H_w$, and $U_w$  denotes the subgroup of principal units inside $\mathcal O_w^\times$. Also, $J_H$ and $C_H$ denote the topological groups of id\`eles and id\`ele classes of $H$, respectively. We denote by $G_w$ and $I_w$ the decomposition group and the inertia group of $w$ in $H/F$. When there is no risk of confusion we may denote those groups by $G_v$ and $I_v$ respectively, where $v$ is the prime in $F$ below $w$. We let $\Delta G$ and $\Delta G_w$ the augmentation ideals of $\Bbb Z[G]$ and $\Bbb Z[G_w]$, respectively.
\begin{definition}\label{larger-S'}  
We call a finite set  $S''$ of places in $F$, an $(H/F, S,S', T)$--large set if.
\begin{itemize}
  \item $S\cup S' \subseteq S''$ and $S''\cap T=\phi$
  \item Each element in $S''\setminus (S\cup S')$ is unramified in $H/F$.
  \item  $Cl_{S''}^T(H)=1$
  \item $\bigcup_{w\in S''(H)} G_w=G$. 
  \item There exist an element $v_0\in S''\setminus (S\cup S')$ such that $G_{v_0}=Gal(H/H^+)$ where $H^+$ is the maximal totally real subfield of $H$. 
\end{itemize}
\end{definition}
\medskip

\noindent For the rest of this section, we fix an $(H/F, S,S', T)$--large
 set $S''$. The existence of such a set follows from a standard Chebotarev density theorem argument.\\

Let us  fix a place $v$ of $F$ and a place $w$ of $H$ which sits above $v$. Following Ritter and Weiss \cite{Ritter-Weiss}, we define a $G_w$-module $V_w$, as an extension of $\Delta G_w$ by $H_w^\times$
\begin{equation}\label{local-exact-sequence}
0\xrightarrow[]{} H_w^{\times}\xrightarrow[]{s}V_w\xrightarrow[]{}\Delta G_w\xrightarrow[]{}0, \end{equation}
whose extension class $\alpha_w$ corresponds via the canonical isomorphisms (see \cite{Ritter-Weiss})
$$Ext_{G_w}^1(\Delta G_w,H_w^{\times})=H^1(G_w,Hom(\Delta G_w,H_w^{\times}))\overset{\delta_w'}{\simeq}  H^2(G_w,H_w^{\times}), \qquad \alpha_w\mapsto u_{H_w/F_v}, $$ 
to the local fundamental class $u_{H_w/F_v}\in  H^2(G_w,H_w^{\times})$. 
Further, the composite injective map $\mathcal O_w^\times \subseteq H_w^\times\overset{s}{\longrightarrow} V_w$ gives rise to an exact sequence of $G_w$--modules 
\begin{equation}\label{local-exact-sequence-units}0\xrightarrow[]{} O_w^{\times}\xrightarrow[]{}V_w \xrightarrow[]{}W_w\xrightarrow[]{}0,\end{equation}
where $W_w\simeq V_w/s(\mathcal O_w^\times)$ . 
Moreover, since, $H_w^{\times}/O_w^{\times}\simeq \Bbb Z$ (with $G_w$ acting trivially on $\Bbb Z$),  we have the following exact sequence of $G_w$--modules.
\begin{equation}\label{local-exact-sequence-W}0\xrightarrow[]{}\mathbb{Z}\xrightarrow[]{i_w}W_w\xrightarrow[]{j_w}\Delta G_w\xrightarrow[]{}0.\end{equation}\\
We also have the following alternative description of $W_w$ as a $\mathbb{Z}[G_w]-$module due to Weiss and Gruenberg \cite{Gruenberg-Weiss}. 

\begin{equation}\label{W-description}
  W_w=\{(x,y)\in \Delta G_w\bigoplus \mathbb{Z}[G_w/I_w]; \Bar{x}=(1-\phi_w^{-1})y\}  
\end{equation}
where $\Bar{x}$ is the image of $x$ in $\mathbb{Z}[G_w/I_w]$ and $\phi_w$ is the Frobenius. Clearly, this is a free $\mathbb{Z}$- module. A $\mathbb{Z}$-basis is given by, $$\Bigl\{w_g=\Bigl(g-1,\sum_{i=1}^{a(g)}\phi_w^i\Bigr) ; g\in G_w\Bigr\}$$ where $a(g)$ defined such that for each $g\in G_w, \Bar{g}=\phi_w^{a(g)}$ and $0<a(g)\leq l_v:=|G_w/I_w|$. Under the notation of the short exact sequence \ref{local-exact-sequence-W}, $i_w(1)=w_1$ and $j_w(w_g)=g-1$. Now, the $G_w$ action on these basis elements is given by $g\cdot w_h=w_{gh}-w_g+a_{g,h}w_1$ for each $g,h\in G_w$ where $a_{g,h}$ is defined by $a(g)+a(h)=a(gh)+l_va_{g,h}$.
\\\\

\begin{proposition}\label{local map - W}
    Under the above description of $W_w$, the followings are true.

    \begin{enumerate}
        \item If $H_w/F_v$ is unramified, we have the isomorphism,
        $$\iota_w: W_w\cong \mathbb{Z}[G_w]$$
        which sends $(x,y)$ to $y$.
        \item For any $w$, we have the short exact sequence,
        $$0\xrightarrow[]{} W_w \xrightarrow[]{f_w} \mathbb{Z}[G] \xrightarrow[]{} \mathbb{Z}[G_w/I_w]/(1-\phi_w^{-1}+|I_w|)\xrightarrow[]{} 0$$
        where $f_w$ sends $(x,y)$ to $x+N(I_w)y$. Here by abuse of notation we also denote a lift of the Frobenius of $w$ to $G_w$ by $\phi_w$.
    \end{enumerate}
\end{proposition}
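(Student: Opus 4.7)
The plan is to work directly with the explicit description \eqref{W-description} of $W_w$ as $\{(x,y)\in \Delta G_w\oplus\mathbb{Z}[G_w/I_w] : \bar x = (1-\phi_w^{-1})y\}$ together with the $\mathbb{Z}$-basis $\{w_g\}_{g\in G_w}$ and the action formula $g\cdot w_h = w_{gh}-w_g+a_{g,h}w_1$. I will assume the middle term in (2) should be $\mathbb{Z}[G_w]$, since the formula $x+N(I_w)y$ lives there and the stated cokernel is finite.

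For (1), when $H_w/F_v$ is unramified, $I_w$ is trivial, so the bar map is the identity and the defining relation forces $x=(1-\phi_w^{-1})y$. This makes the projection $\iota_w(x,y)=y$ bijective as abelian groups, with inverse $y\mapsto ((1-\phi_w^{-1})y,y)$, the element $(1-\phi_w^{-1})y$ automatically having augmentation zero and so lying in $\Delta G_w$. It remains to verify $G_w$-linearity on the basis $\{w_g\}$: using $\iota_w(w_g)=\sum_{i=1}^{a(g)}\phi_w^i$, one computes both sides of $\iota_w(g\cdot w_h)=g\cdot \iota_w(w_h)$ by splitting into the two cases $a(g)+a(h)\le l_v$ (where $a_{g,h}=0$) and $a(g)+a(h)>l_v$ (where $a_{g,h}=1$ and one uses $\sum_{i=1}^{l_v}\phi_w^i=N(G_w)$ together with $\phi_w^{l_v}=1$); in both cases the identity reduces to a telescoping computation.

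For (2), define $f_w(x,y):=x+N(I_w)\widetilde{y}$, where $\widetilde{y}\in\mathbb{Z}[G_w]$ is any lift of $y$; the choice is irrelevant because $N(I_w)$ kills the lifting ambiguity, and $f_w$ is $G_w$-equivariant since $N(I_w)$ is $G_w$-fixed. For injectivity, suppose $x+N(I_w)\widetilde{y}=0$; reducing mod $I_w$ gives $\bar x+|I_w|y=0$, which combined with the defining relation $\bar x=(1-\phi_w^{-1})y$ yields $(1-\phi_w^{-1}+|I_w|)y=0$ in $\mathbb{Z}[G_w/I_w]$. Under any character $\chi$ of $G_w/I_w$, the image of $1-\phi_w^{-1}+|I_w|$ has real part at least $|I_w|\ge 1$, hence is nonzero; so this element is a non-zero-divisor in $\mathbb{Z}[G_w/I_w]$, forcing $y=0$ and then $x=0$.

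For exactness at the middle, define $\pi:\mathbb{Z}[G_w]\to\mathbb{Z}[G_w/I_w]/(1-\phi_w^{-1}+|I_w|)$ as reduction mod $I_w$ composed with the quotient; clearly $\pi$ is surjective and $\pi\circ f_w=0$ since $\pi(x+N(I_w)\widetilde{y})=\bar x+|I_w|y=(1-\phi_w^{-1}+|I_w|)y=0$. Conversely, if $\pi(z)=0$, pick $y\in\mathbb{Z}[G_w/I_w]$ with $\bar z=(1-\phi_w^{-1}+|I_w|)y$, lift $y$ to some $\widetilde{y}\in\mathbb{Z}[G_w]$, and set $x:=z-N(I_w)\widetilde{y}$. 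Then $\bar x = (1-\phi_w^{-1})y$, whose augmentation in $\mathbb{Z}[G_w/I_w]$ is zero; since augmentation is preserved by $\mathbb{Z}[G_w]\to\mathbb{Z}[G_w/I_w]$, $x\in \Delta G_w$, so $(x,y)\in W_w$ and $f_w(x,y)=z$. The only mildly delicate step is the non-zero-divisor verification for $1-\phi_w^{-1}+|I_w|$, which is what pins down the precise form of the cokernel.
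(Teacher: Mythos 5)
The paper does not actually prove this proposition; it disposes of it in one line by citing Proposition 2.6 of \cite{Atsuta-Kataoka-ETNC}. Your proof supplies a complete, self-contained argument directly from the Gruenberg--Weiss description \eqref{W-description} of $W_w$, and it is correct. You also rightly flag a typo in the statement: the middle term of the sequence in (2) should be $\mathbb{Z}[G_w]$, not $\mathbb{Z}[G]$, since $x+N(I_w)\widetilde{y}$ lives in $\mathbb{Z}[G_w]$ and the cokernel is a quotient of $\mathbb{Z}[G_w/I_w]$. The substantive steps all check out: $f_w$ is well defined because $N(I_w)$ annihilates $\Delta I_w$; $G_w$-equivariance follows since $N(I_w)$ is $G_w$-central and $g\widetilde{y}$ is a lift of $gy$; injectivity reduces, via the defining relation $\bar{x}=(1-\phi_w^{-1})y$, to the observation that $1-\phi_w^{-1}+|I_w|$ has every character value with real part at least $|I_w|\geq 1$, hence is a non-zero-divisor in $\mathbb{Z}[G_w/I_w]$; and exactness at the middle is the explicit lift-and-correct construction. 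One simplification is available in part (1): the action formula $g\cdot w_h=w_{gh}-w_g+a_{g,h}w_1$ is just the restriction to $W_w$ of the diagonal $G_w$-action on $\Delta G_w\oplus\mathbb{Z}[G_w/I_w]$ expressed in the basis $\{w_g\}$, so the projection $\iota_w(x,y)=y$ is automatically $G_w$-linear and the case split on $a_{g,h}$ can be avoided. What your approach buys over the paper's bare citation is that the reader sees exactly where the element $1-\phi_w^{-1}+|I_w|$ comes from, which is the same element that governs the cokernel $Z_{S'}$ and the shifted Fitting ideal computations used later.
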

\begin{proof}
    This is Proposition 2.6 of \cite{Atsuta-Kataoka-ETNC}.
\end{proof}

A similar argument gives rise to a global analogue of the exact sequence \eqref{local-exact-sequence}
\begin{equation}\label{global-exact-sequence}0\xrightarrow[]{}C_H\xrightarrow[]{}D\xrightarrow[]{}\Delta G\xrightarrow[]{}0,\end{equation}
whose extension class $\alpha$ corresponds via the canonical isomorphisms (see \cite{Ritter-Weiss})
$$Ext_G^1(\Delta G,C_H)=H^1(G,Hom(\Delta G,C_H))\overset{\delta'}{\simeq} H^2(G,C_H), \qquad \alpha\mapsto u_{H/F}$$
to the global fundamental class $u_{H/F}\in H^2(G,C_H).$

\begin{definition} Assume that $R$ is a set of finite places in $F$, and that for every $v\in R$ we fix a place $w$ in $H$ above $v$ and a $G_w$--module $M_w$.  Then, we define the following $G$--module
\[\widetilde{\prod_{v\in R}} M_w:=\prod_{v\in R} Ind_{G_w}^G M_w,
\]
where $Ind_{G_w}^G M_w:=M_w\otimes_{\Bbb Z[G_w]}\Bbb Z[G]$ is the usual induced module.
\end{definition}

Now, we can define the following $G$--modules
$$J:=\prod_{v\notin S\cup T}^{\sim} O_w^{\times}\times \prod_{v\in S}^{\sim} H_w^{\times}\times \prod_{v\in T}^{\sim} U_w, \qquad 
J':=\prod_{v\notin S''\cup T}^{\sim} O_w^{\times}\times \prod_{v\in S''}^{\sim} H_w^{\times}\times \prod_{v\in T}^{\sim} U_w$$
$$V:=\prod_{v\notin S''\cup T}^{\sim} O_w^{\times}\times \prod_{v\in S''}^{\sim} V_w\times \prod_{v\in T}^{\sim} U_w$$ 
$$W:=\prod_{v\in S''\setminus S}^{\sim} W_w\times \prod_{v\in S}^{\sim} \Delta G_w, \qquad W':=\prod_{v\in S''}^{\sim} \Delta G_w.$$
If we combine \eqref{local-exact-sequence}--\eqref{local-exact-sequence-W}, we obtain the short exact sequences of $G$--modules
\begin{equation}\label{exact-sequences-J}0\xrightarrow[]{} J \xrightarrow[]{} V\xrightarrow[]{} W \xrightarrow[]{} 0,\qquad 
0\xrightarrow[]{} J' \xrightarrow[]{} V \xrightarrow[]{} W' \xrightarrow[]{} 0.\end{equation}
Now, Theorem 1 in \cite{Ritter-Weiss} gives us the  the following commutative diagram of $G$--modules
\[\begin{tikzcd}
& 0\arrow{d} & 0\arrow{d} & 0\arrow{d} \\%
& O_{H,S,T}^{\times}\arrow{d} & V^{\theta}\arrow{d} & W^{\theta}\arrow{d} \\%
0\arrow{r} & J\arrow{r}\arrow{d}{\theta_J} & V \arrow{r}\arrow{d}{\theta_V} & W \arrow{r}\arrow{d}{\theta_W} & 0 \\%
0\arrow{r} & C_H\arrow{r}\arrow[two heads]{d} & D \arrow{r}\arrow{d} & \Delta G\arrow{r}\arrow{d} & 0 \\%
& Cl_S^T(H) & 0 & 0
\end{tikzcd}
\]
where $\theta_J$ and $\theta_W$ are the obvious maps and the existence and uniqueness of $\theta_V$ is a direct consequence of the compatibility between the local and global fundamental classes. 
The snake lemma gives an exact sequence of $G$--modules
\begin{equation}\label{sequence-units-class-S}
0\xrightarrow[]{} O_{H,S,T}^{\times}\xrightarrow[]{} V^{\theta}\xrightarrow[]{} W^{\theta}\xrightarrow[]{} Cl_S^T(H)\xrightarrow[]{} 0.\end{equation}
An identical line of arguments, involving $J'$ and $W'$ instead of $J$ and $W$, produces the following exact sequence of $G$--modules. 
\begin{equation}\label{sequence-class-units-S'}0\xrightarrow[]{} O_{H,S'',T}^{\times}\xrightarrow[]{} V^{\theta}\xrightarrow[]{} W'^{\theta}\xrightarrow[]{} 0.\end{equation}
Moreover, the definition of the maps $\theta_W$ and $\theta_{W'}$ combined with \eqref{local-exact-sequence-W} gives a short exact sequence of $G$--modules
\begin{equation}\label{sequence-W-W'}0\to Y_{S''\setminus S}\to W^\theta\to W'^{\theta}\to 0.\end{equation}
where $Y_{S''\setminus S}$ denotes the free $\Bbb Z$--module of divisors (with integral coefficients) of $H$, supported at primes in $S''_H\setminus S_H$.\\

Observe that since $H$ is a CM field there is a unique complex conjugation automorphism $c\in G$. Throughout this paper, for any $\Bbb Z[G]-$module $M$, we define,
$$M^-:=\frac{1}{2}(1-c)\cdot (M\otimes \mathbb{Z}[1/2]).$$
Observe that this is a $\Bbb Z[G]^-$--module, 
where 
$$\Bbb Z[G]^-:=\Bbb Z[1/2][G]/(1+c).$$ 
Clearly these definitions can be extended to any subgroup of $G$ containing $c$ ($G_{v_0}$ for instance).
\begin{proposition}\label{W-theta-isomorphism}
    We have the following isomorphism of $\mathbb{Z}[G]^-$ modules.
    $$W^{\theta,-}\cong \left(\prod_{v\in S''\setminus (S\cup \{v_0\})}^{\sim} W_w\times \prod_{v\in S}^{\sim} \Delta G_w\right)^-$$
\end{proposition}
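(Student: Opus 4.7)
The plan is to isolate the $v_0$--contribution to $W$ and show that, on minus parts, the restriction of $\theta_W$ to this factor is an isomorphism onto $\Delta G^-$. Once this is established, the stated isomorphism follows by a short diagram chase that realizes the kernel of $\theta_W^-$ as a graph over the remaining factors.

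First I would analyze the $v_0$--summand. Since $v_0\in S''\setminus(S\cup S')$, it is unramified in $H/F$ by Definition \ref{larger-S'}, so $I_{w_0}=1$. The hypothesis $G_{v_0}=\mathrm{Gal}(H/H^+)=\langle c\rangle$ has order $2$, so the Frobenius $\phi_{w_0}$ must generate $G_{w_0}$ and therefore equals $c$. By Proposition \ref{local map - W}(1), $\iota_{w_0}\colon W_{w_0}\xrightarrow{\sim}\mathbb{Z}[G_{w_0}]$, and under this identification the description \eqref{W-description} shows that every element $(x,y)\in W_{w_0}$ satisfies $x=(1-c)y$, so the surjection $j_{w_0}\colon W_{w_0}\twoheadrightarrow \Delta G_{w_0}$ of \eqref{local-exact-sequence-W} becomes multiplication by $(1-c)$ on $\mathbb{Z}[G_{w_0}]$.

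Next I would verify that the $v_0$--component $\theta_W^{v_0}\colon \mathrm{Ind}_{G_{w_0}}^{G}W_{w_0}\to \Delta G$ of $\theta_W$, which factors as $\mathrm{Ind}(j_{w_0})$ followed by the natural inclusion $\mathrm{Ind}_{G_{w_0}}^{G}\Delta G_{w_0}\hookrightarrow \Delta G$, becomes an isomorphism after applying the minus--part functor. Since $(1-c)/2$ acts as the identity on any minus part, $\mathrm{Ind}(j_{w_0})^-$ is an isomorphism. For the second factor I would use the short exact sequence
\[0\to \mathrm{Ind}_{G_{w_0}}^{G}\Delta G_{w_0}\to \mathbb{Z}[G]\to \mathbb{Z}[G/G_{w_0}]\to 0\]
together with the fact that, since $G$ is abelian and $c\in G_{w_0}$, complex conjugation acts trivially on $\mathbb{Z}[G/G_{w_0}]$, forcing $\mathbb{Z}[G/G_{w_0}]^-=0$ (as $2$ is invertible on minus parts). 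Combined with $\Delta G^-=\mathbb{Z}[G]^-$ (because $\mathbb{Z}^-=0$ kills the augmentation), this yields $(\mathrm{Ind}_{G_{w_0}}^{G}\Delta G_{w_0})^-\cong \Delta G^-$, so $\theta_W^{v_0,-}$ is an isomorphism onto $\Delta G^-$.

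Finally, decompose $W^-=\widetilde W^-\oplus C^-$, where $C^-:=(\mathrm{Ind}_{G_{w_0}}^{G}W_{w_0})^-$ and $\widetilde W^-$ is the product on the right--hand side of the proposition, and write $\theta_W^-=(\theta_{\widetilde W}^-,\theta_C^-)$. Since $\theta_C^-$ is an isomorphism by the previous step, $W^{\theta,-}=\ker(\theta_W^-)$ is the graph of the $\mathbb{Z}[G]^-$--linear map $-(\theta_C^-)^{-1}\circ\theta_{\widetilde W}^-\colon \widetilde W^-\to C^-$, and projection onto the first summand gives the desired isomorphism. The main technical point is the identification of $\theta_W^{v_0,-}$ with an isomorphism onto $\Delta G^-$, which crucially relies on the choice of $v_0$ built into Definition \ref{larger-S'}; the remaining work is routine bookkeeping with the minus--part functor.
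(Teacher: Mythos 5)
Your proposal is correct and takes essentially the same approach as the paper: both proofs hinge on showing that the $v_0$--component of $\theta_W^-$ is an isomorphism onto $\Delta G^-=\mathbb{Z}[G]^-$ and then using the resulting splitting to identify $W^{\theta,-}$ with the remaining factors. Your explicit identification of $j_{w_0}$ as multiplication by $(1-c)$ and the "graph" formulation at the end are just more detailed versions of the paper's terser appeal to the exact sequence $0\to\mathbb Z\to W_{w_0}\to\Delta G_{w_0}\to 0$ (using $\mathbb Z^-=0$) and the split short exact sequence.
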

\begin{proof}
    Observe that $\Delta G^-=\mathbb{Z}[G]^-$. Therefore, we have the following short exact sequence. 
    $$0\xrightarrow[]{} W^{\theta,-}\xrightarrow[]{} W^- \xrightarrow[]{\theta_W^-} \mathbb{Z}[G]^- \xrightarrow[]{} 0$$
    Now, for any $w_0|v_0$ we have $G_{w_0}=Gal(H/H^+)=\langle c \rangle$. Hence, by the short exact sequence \ref{local map - W} we have that 
    $$W_{w_0}^-\cong (\Delta G_{w_0})^- \cong (\mathbb{Z}[G_{w_0}])^-$$ via the map $j$. Therefore, we have the induced isomorphism $$Ind_{G_{w_0}}^G W_{w_0}^- \cong Ind_{G_{w_0}}^G (\mathbb{Z}[G_{w_0}])^- \cong \mathbb{Z}[G]^-.$$ It is easily seen that this is the $v_0$ component of the the map $\theta_W^-$. Hence the inverse isomorphism is a section of the map $\theta_W^-$. So, the above short exact sequence splits and induces the desired isomorphism. 
\end{proof}

\begin{definition}
    We define the $\mathbb{Z}[G]^--$module homomorphism
    $$f_{S,S'}:W^{\theta,-}\xrightarrow[]{} \bigoplus_{S''\setminus\{v_0\}}\mathbb{Z}[G]^-$$
    componentwise via the isomorphism in Proposition \ref{W-theta-isomorphism}  as follows.

    \begin{itemize}
        \item For $v\in S$, $f$ is induced by the inclusion $\Delta G_w \subseteq \mathbb{Z}[G_w]$ 
        \item For $v\in S'$, $f$ is induced by the map $f_w$ in the Proposition \ref{local map - W} (2).  
        \item For $v\in S''\setminus (S\cup S'\cup \{v_0\})$, $f$ is induced by the map $\iota_w$ in the Prposition \ref{local map - W} (1).  
    \end{itemize}
\end{definition}

Now we are ready to define the modified Ritter-Weiss module.

\begin{definition}\label{Ritter-Weiss-module-definition}
Define the $\mathbb{Z}[G]^--$ module $\Omega_{S,S'}^T(H)$ associated to the data $(H/F, S,S', T)$ by 
\[\Omega_{S,S'}^T(H):=coker\left(\Psi_{S,S'}^T : V^{\theta,-}\xrightarrow[]{} W^{\theta,-} \xrightarrow[]{f_{S,S'}} \bigoplus_{S''\setminus \{v_0\}}\mathbb{Z}[G]^-\right) \] 
\end{definition}
\begin{remark} \label{Omega-equiv}
    Observe that when $S=S_{\infty}$, the above definition coincides with that of $\Omega_{S'}^T(H)$ in \cite{Atsuta-Kataoka-ETNC}. Moreover, when $S'=\emptyset$ and $S_{ram}(H/F)\subseteq S\cup T$, the above definition coincides with the minus part of the Ritter-Weiss module $\nabla_S^T(H)$ defined in \cite{Dasgupta-Kakde}.
\end{remark}
Denote $S\setminus S_{\infty}$, the set of finite places of $S$ by $S_f$. We prove the following technical lemma which will be used in the next chapter. This is a generalization of Lemma 4.16(1) of \cite{Atsuta-Kataoka-ETNC}.
\begin{lemma}\label{change of maps}
  For a place $v$ in $F$ and a place $w$ in $H$ above $v$ we define $h_v=1-e_{I_w}(\phi^{-1}_w-|I_w|)\in \mathbb{Q}[G]$. We have the following commutative diagram of $\mathbb{Q}[G]^--$modules
\[
\begin{tikzcd}[row sep=10ex, column sep=large]
    V^{\theta,-}\otimes\mathbb{Q} \arrow{r}{\Psi_{S_{\infty},S_f\cup S'}}\arrow{dr}{\Psi_{S,S'}} & \bigoplus_{v\in S''\setminus \{v_0\}}\mathbb{Q}[G]^- \arrow{d} \\%
    & \bigoplus_{v\in S''\setminus\{v_0\}}\mathbb{Q}[G]^-
\end{tikzcd}
\]
where the vertical map is defined as the identity map on $v\in S''\setminus (S\cup  \{v_0\})$ and as multiplication by $(1-e_{I_w}\phi_w^{-1})h_v^{-1}$ on $v\in S_f$.
\end{lemma}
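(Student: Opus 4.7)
The plan is to verify commutativity of the triangle componentwise over the places $v \in S'' \setminus \{v_0\}$. First, I would check that the two maps $\Psi_{S,S'}$ and $\Psi_{S_\infty, S_f \cup S'}$ agree on every $v \notin S_f$: indeed, both maps are assembled from the same local recipe in Definition~\ref{Ritter-Weiss-module-definition}, and the only place where the two partitions $(S, S')$ and $(S_\infty, S_f \cup S')$ of $S''$ differ is on $S_f$. For $v \in S_\infty$ or $v \in S'' \setminus (S \cup S' \cup \{v_0\})$ both versions use $\Delta G_w \hookrightarrow \mathbb{Z}[G_w]$ respectively $\iota_w$, and for $v \in S'$ both use $f_w$; on all such components the vertical map is the identity and the triangle commutes tautologically.

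The substance of the lemma lies in the $v \in S_f$ components. Here both maps share the common first factor $V_w \to W_w$ from the short exact sequence~(\ref{local-exact-sequence-units}), and they differ only in the next step: $\Psi_{S,S'}$ applies the composite $W_w \xrightarrow{j_w} \Delta G_w \hookrightarrow \mathbb{Z}[G_w]$, which under the Gruenberg--Weiss description~(\ref{W-description}) reads $(x,y) \mapsto x$, whereas $\Psi_{S_\infty, S_f\cup S'}$ applies $f_w$ of Proposition~\ref{local map - W}(2), which reads $(x,y) \mapsto x + N(I_w) y$. Commutativity of the triangle at $v \in S_f$ therefore reduces to the local algebraic identity
\[
(1 - e_{I_w} \phi_w^{-1}) \bigl(x + N(I_w) y\bigr) = h_v \cdot x
\]
for every $(x,y) \in W_w$, to be established in $\mathbb{Q}[G_w]^-$.

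To prove this identity I would use the defining relation $\bar{x} = (1 - \phi_w^{-1}) y$ together with $e_{I_w} N(I_w) = N(I_w)$ and the fact that $N(I_w)$ annihilates $\Delta I_w \cdot \mathbb{Z}[G_w]$, so that $N(I_w) y$ is independent of the lift of $y$ and $N(I_w) \bar{x} = N(I_w) x$. A one-line computation then yields $(1 - e_{I_w}\phi_w^{-1}) N(I_w) y = (1 - \phi_w^{-1}) N(I_w) y = N(I_w) x$, and adding $(1 - e_{I_w}\phi_w^{-1}) x$ collapses the left hand side to $(1 + |I_w| e_{I_w} - e_{I_w} \phi_w^{-1}) x = h_v x$. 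Multiplying by $h_v^{-1}$ gives the claim, provided $h_v$ is invertible in $\mathbb{Q}[G]^-$; this is guaranteed by examining $h_v$ on each character component, where it acts as $1$ when $\chi|_{I_w}$ is non-trivial and as $1 + |I_w| - \chi(\phi_w)^{-1}$ otherwise, the latter being nonzero because $\chi(\phi_w)$ is a root of unity and $(1+|I_w|)^{-1}$ is not.

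The main obstacle is really just the initial bookkeeping --- correctly identifying that the two definitions agree outside $S_f$, and factoring both maps at places of $S_f$ through the common intermediate module $W_w$ so that the problem reduces to the clean local identity above. The identity itself and the invertibility of $h_v$ are then essentially routine.
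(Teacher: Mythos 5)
Your proof is correct, and it follows the same overall strategy as the paper --- componentwise verification over $S''\setminus\{v_0\}$, with the only nontrivial work at places $v\in S_f$, where both maps factor through the common module $W_w$ --- but the local step is handled differently. The paper first establishes that $W_w\otimes\mathbb{Q}$ is a free $\mathbb{Q}[G_w]$--module of rank one with explicit generator $(1-e_{I_w}\phi_w^{-1},\,1)$, then checks that $f_w$ and $j_w$ send this single generator to $h_v$ and $1-e_{I_w}\phi_w^{-1}$ respectively, from which commutativity follows by $\mathbb{Q}[G_w]$--linearity. You instead verify the algebraic identity $(1-e_{I_w}\phi_w^{-1})(x+N(I_w)y)=h_v\,x$ directly for an arbitrary $(x,y)\in W_w\otimes\mathbb{Q}$, using $e_{I_w}N(I_w)=N(I_w)$, the defining relation $\overbar{x}=(1-\phi_w^{-1})y$, and the fact that $N(I_w)$ kills $\Delta I_w\cdot\mathbb{Z}[G_w]$ so that $N(I_w)\tilde y$ is lift-independent and $N(I_w)(1-\phi_w^{-1})\tilde y=N(I_w)x$. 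Both arguments ultimately rest on the same local algebra; yours avoids the freeness claim (which the paper proves via a small surjectivity argument) and is in that sense slightly more elementary and self-contained, whereas the paper's version gives the reusable structural fact that $W_w\otimes\mathbb{Q}$ is $\mathbb{Q}[G_w]$--free of rank one. Your invertibility check for $h_v$ by characters matches the paper's brief remark.
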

\begin{proof} First of all observe that $h_v\in\mathbb{Q}[G]^{\times}$ because all of its components with respect to the characters of $G$ are nonzero. Hence the vertical map makes sense. The proof of this lemma is very similar to that of Lemma 4.16 (1) of \cite{Atsuta-Kataoka-ETNC}. But, we will elaborate it here for the convenience of the reader.\\

First let us prove that  for each place $v$ of $F$ and $w$ of $H$ above $v$, $W_w\otimes \mathbb{Q}$ is a free $\mathbb{Q}[G_w]-$module of rank $1$ with the generator $(1-e_{I_w}\phi_w^{-1},1)$. Here we are using the description \ref{W-description} for $W_w$. \\

Let $f:\mathbb{Q}[G_w] \xrightarrow[]{} W_w\otimes \mathbb{Q}$ be the $\mathbb{Q}[G_w]-$module morphism given by $f(1)=(1-e_{I_w}\phi_w^{-1},1)$. Since $W_w$ has $\mathbb{Z}-$rank $|G_w|$ (see \ref{local-exact-sequence-W} for instance), both $W_w\otimes \mathbb{Q}$ and $\mathbb{Q}[G_w]$ have the same dimension over $\mathbb{Q}$. Hence, in order to show that $f$ is an isomorphism, it is enough to show the surjectivity. Now, let $(x,y)\in W_w\otimes \mathbb{Q}$. Let $\Tilde{y}\in \mathbb{Q}[G_w]$ be a lift of $y$. Then, inside $\mathbb{Q}[G_w/I_w]$ we have,
$$\Tilde{y}\cdot (1-e_{I_w}\phi_w^{-1})=y\cdot (1-\phi_w^{-1})=\overline{x}$$
Hence, we have $$t:=x-\Tilde{y}\cdot (1-e_{I_w}\phi_w^{-1})\in \Delta I_w.$$ But, clearly $e_{I_w}\in \mathbb{Q}[G_w]$ annihilates $\Delta I_w$. Therefore we have $(1-e_{I_w}\phi_w^{-1})t=t$. Hence, $(\Tilde{y}+t)(1-e_{I_w}\phi_w^{-1})=x$. So we have,
$$(\Tilde{y}+t)\cdot (1-e_{I_w}\phi_w^{-1},1)=(x,y)$$
Therefore, $f$ is surjective and thereby $W_w\otimes \mathbb{Q}$ is a free $\mathbb{Q}[G_w]-$module of rank 1. Now, observe that the maps $\Psi_{S,S'}$ and $\Psi_{S_{\infty},S_f\cup S'}$ differ because the maps $f_{S,S'}$ and $f_{S_{\infty},S_f\cup S'}$ differ. Those maps differ only at $S-$places. Now, observe that, 
$$f_w(1-e_{I_w}\phi_w^{-1},1)=h_v$$
$$j_w(1-e_{I_w}\phi_w^{-1},1)=1-e_{I_w}\phi_w^{-1}$$
where $f_w$ is as in Proposition \ref{local map - W} (2) and $j_w$ is as in the short exact sequence \ref{local-exact-sequence-W}. This together with the definitions of $f_{S,S'}$ and $f_{S_{\infty},S_f\cup S'}$ at $S-$places gives us the following commutative diagram.
\[
\begin{tikzcd}[row sep=6ex, column sep=12ex]
     \left(\prod_{v\in S''\setminus\{v_0\}}^{\sim} W_w\otimes \mathbb{Q} \right)^-\times \left(\prod_{v\in S_{\infty}}^{\sim} \Delta G_w \otimes \mathbb{Q}\right)^- \arrow{r}{f_{S_{\infty},S_f\cup S'}} \arrow{d}        & \bigoplus_{v\in S''\setminus\{v_0\}}\mathbb{Q}[G]^- \arrow{d} \\%
   \left(\prod_{v\in S''\setminus (S\cup \{v_0\})}^{\sim} W_w\otimes \mathbb{Q} \right)^-\times \left(\prod_{v\in S}^{\sim} \Delta G_w \otimes \mathbb{Q}\right)^- \arrow{r}{f_{S,S'}}  & \bigoplus_{v\in S''\setminus\{v_0\}}\mathbb{Q}[G]^-
\end{tikzcd}
\]
Here the left vertical arrow is identity map on $v\in S''\setminus (S\cup \{v_0\})$ and induced by $j_w$ on $v\in S_f$. The right vertical map is the identity map on $v\in S''\setminus (S\cup  \{v_0\})$ and the multiplication by $(1-e_{I_w}\phi_w^{-1})h_v^{-1}$ on $v\in S_f$. This completes the proof.
\end{proof}
Now let us see how our new module is related to the $(S,T)-$ray class group, $Cl_S^T(H)$.
\begin{proposition}
We have the following short exact sequence of $\mathbb{Z}[G]^--$modules.
    \begin{equation}\label{nabla-sequences}
0\xrightarrow[]{} Cl_S^T(H)^-\xrightarrow[]{}\Omega_{S,S'}^T(H)\xrightarrow[]{} Y_{S}^- \oplus Z_{S'}^-\xrightarrow[]{} 0
\end{equation}
Here $Y_S$ is the free $\mathbb{Z}-$module of divisors supported at primes at $S_H$ and for fixed choices of Frobenius $\phi_w\in G$,
 $$Z_{S'}=\bigoplus_{v\in S'} \mathbb{Z}[G/I_w]/(1-\phi_w^{-1}+|I_w|).$$  
\end{proposition}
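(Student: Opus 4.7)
The strategy is two-stage: first I would show that the map $f_{S,S'}$ of Definition \ref{Ritter-Weiss-module-definition} is itself injective with cokernel isomorphic to $Y_S^- \oplus Z_{S'}^-$, and then combine this with the minus part of the snake-lemma sequence (\ref{sequence-units-class-S}) to extract the desired short exact sequence.

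For the first stage, the decomposition of $W^{\theta,-}$ provided by Proposition \ref{W-theta-isomorphism} presents $f_{S,S'}$ as a direct sum of component maps indexed by $v \in S''\setminus\{v_0\}$, so the cokernel is computed componentwise. At $v \in S''\setminus(S\cup S'\cup\{v_0\})$ the map $\iota_w$ of Proposition \ref{local map - W}(1) is an isomorphism and contributes nothing. At $v \in S'$ the map $f_w$ of Proposition \ref{local map - W}(2) is injective with cokernel $\mathbb{Z}[G_w/I_w]/(1-\phi_w^{-1}+|I_w|)$, and after induction and passage to the minus part this yields exactly the $v$-summand of $Z_{S'}^-$. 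At $v \in S$ the inclusion $\Delta G_w \hookrightarrow \mathbb{Z}[G_w]$ has cokernel $\mathbb{Z}$ with trivial $G_w$-action, so after induction to $G$ the cokernel is $\mathbb{Z}[G/G_w]$, whose minus part is the $v$-summand of $Y_S^-$; this minus part vanishes precisely when $c \in G_w$, in particular at all archimedean places, matching $Y_{S_\infty}^- = 0$. Assembling these computations produces the short exact sequence
\begin{equation}\label{plan-f-SES}
0 \to W^{\theta,-} \xrightarrow{f_{S,S'}} \bigoplus_{v \in S''\setminus\{v_0\}} \mathbb{Z}[G]^- \to Y_S^- \oplus Z_{S'}^- \to 0.
\end{equation}

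For the second stage, the minus part of the snake-lemma sequence (\ref{sequence-units-class-S}) reads
$$0 \to (O_{H,S,T}^{\times})^- \to V^{\theta,-} \to W^{\theta,-} \to Cl_S^T(H)^- \to 0,$$
so setting $K := \mathrm{image}(V^{\theta,-} \to W^{\theta,-})$ one has $W^{\theta,-}/K \cong Cl_S^T(H)^-$. By Definition \ref{Ritter-Weiss-module-definition} and the injectivity of $f_{S,S'}$ established above, $\Omega_{S,S'}^T(H) = \bigl(\bigoplus_{v \in S''\setminus\{v_0\}} \mathbb{Z}[G]^-\bigr)/f_{S,S'}(K)$. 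The quotient of (\ref{plan-f-SES}) onto $Y_S^- \oplus Z_{S'}^-$ therefore factors through $\Omega_{S,S'}^T(H)$, and the kernel of the resulting surjection $\Omega_{S,S'}^T(H) \twoheadrightarrow Y_S^- \oplus Z_{S'}^-$ equals $f_{S,S'}(W^{\theta,-})/f_{S,S'}(K) \cong W^{\theta,-}/K \cong Cl_S^T(H)^-$, which is precisely the desired short exact sequence. The main technical hurdle is the componentwise cokernel analysis at places $v \in S$, where one must carefully separate the archimedean and finite contributions and verify that they reassemble into $Y_S^-$ exactly as defined, and confirm that the $v_0$-component is correctly absorbed by Proposition \ref{W-theta-isomorphism}; everything that follows is a routine diagram chase.
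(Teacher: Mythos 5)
Your proposal is correct and follows essentially the same route as the paper: compute $\mathrm{coker}(f_{S,S'})$ componentwise using Propositions \ref{W-theta-isomorphism} and \ref{local map - W} to get $Y_S^-\oplus Z_{S'}^-$, observe that $f_{S,S'}$ is injective, and splice this with the minus part of the snake-lemma sequence \eqref{sequence-units-class-S} via the definition of $\Omega_{S,S'}^T(H)$. The only difference is that you spell out the final identification (kernel equals $f_{S,S'}(W^{\theta,-})/f_{S,S'}(K)\cong W^{\theta,-}/K\cong Cl_S^T(H)^-$) more explicitly than the paper does.
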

\begin{proof}
By Proposition \ref{local map - W} and the definition of $f_{S,S'}$, $$coker(f_{S,S'})=\left(\prod_{v\in S}^{\sim} \mathbb{Z}[G_w]/\Delta G_w\right)^- \times \left(\prod_{v\in S'}^{\sim} \mathbb{Z}[G_w/I_w]/(1-\phi_w+|I_w|)\right)^-=Y_S^-\oplus Z_{S'}^-$$
Moreover, observe that the map $f$ is injective. Hence, by the short exact sequence \eqref{sequence-units-class-S}, the definition of $\Omega_{S,S'}^T(H)$ and the previous observation gives the desired result.
\end{proof}
\begin{remark}
    By an argument identical to the proof of Lemma A.3 of \cite{Dasgupta-Kakde} one can show that the definition of $\Omega_{S,S'}^T(H)$ does not depend on the auxiliary set $S''$. Moreover, one can also give an explicit description (in the sense of chapter A.5 of \cite{Dasgupta-Kakde} and chapter 2.7 of \cite{Atsuta-Kataoka-ETNC}) of the extension class corresponding to the short exact sequence \ref{nabla-sequences}. But this will not be needed for the applications in the next two chapters.
\end{remark}

\section{On the equivariant Tamagawa number conjecture}

In this section, we use the minus part of the $p-$adic equivariant tamagawa number conjecture, which we henceforth call $ETNC_p^-$, to calculate the $p-$adic Fitting ideal of our module and establish some other consequences. Here we are using the formulation of $ETNC_p^-$ given in \cite{Atsuta-Kataoka-ETNC}. $ETNC_p^-$ was proved by Bullach-Burns-Daoud-Seo \cite{Bullach-Burns-Daoud-Seo} and Dasgupta-Kakde-Silliman\cite{Dasgupta-Kakde-Silliman-ETNC} using Euler systems techniques and by Gambheera-Popescu \cite{gampheera-popescu-RW} using an equivariant main conjecture for the Ritter-Weiss module and the method of Taylor-Wiles primes.\\

Throughout this section we fix an odd prime $p$. We also keep the setup of section 3. That is, we start with the data $(H/F,S,S',T)$ where $S_{\infty}\subseteq S$. But, we also impose some assumptions on the disjoint sets $S,S'$ and $T$. For that we need the following definitions. If $\mu(H)$ is the group of roots of unity in $H$, define the subgroup
$$\mu(H)^T=\{\zeta\in \mu(H) | \zeta\equiv 1\pmod{v} \text{ for each }v\in T_H\}.$$
Let $\mu(H)_p^T$ be the $p-$Sylow subgroup of $\mu(H)^T$. Moreover, let $S_p$ be the set of primes above $p$ in $F$ and let $S_{wild}^p$ be the set of places $v$ of $F$ such that $p|I_v$ where $I_v$ is the inertia group of $v$ in $H/F$.  

\begin{assumption}\label{assumption}
    $S_{wild}^p \subseteq S\cup S' \cup T$, $S_{wild}^p\cap S_p \subseteq S\cup S'$ and $\mu(H)_p^T$ is trivial.
\end{assumption}

\begin{definition}
    The $\Bbb C[G]$--valued ($G$--equivariant) $L$--function associated to $(H/F,S, T)$ of complex variable $s$ is given by the meromorphic continuation to the entire complex plane of the following holomorphic function
$$\Theta_{S, H/F}^T(s):=\prod_{v\not\in S}(1-e_{I_v}\phi_v^{-1}\cdot Nv^{-s})^{-1}\cdot\prod_{v\in T}(1-e_{I_v}\phi_v^{-1}\cdot Nv^{1-s}), \qquad \text{Re}(s)>0.$$
\end{definition}
Here $\phi_v$ is a choice of Frobenius at the prime $v$. The functional equation of $\Theta_{S,H/F}^T(s)$ implies that in fact it takes values in $\mathbb{C}[G]^-$. The resulting meromorphic continuation (also denoted by $\Theta_{S, H/F}^T(s)$) is holomorphic on $\Bbb C\setminus\{1\}$. We are interested in its special value at $0$. Classical results proved independently by Klingen--Siegel (\cite{Klingen} and \cite{Siegel}) and Shintani \cite{Shintani} show that 
\begin{equation}\label{Klingen-Siegel}\Theta_S^T(H/F):=\Theta_{S, H/F}^T(0)\in\Bbb Q[G]^-.\end{equation}
Now we define a special value of equivariant $L-$function attached to the data $(H/F,S,S',T)$.
\begin{definition}
    For the data $(H/F, S,S',T)$ define the Stickelberger element,
    $$\Theta_{S,S'}^T(H/F):=\Theta_S^T(H/F)\cdot \prod_{v\in S'}(1-e_{I_v}(\phi_v^{-1}-|I_v|))\in \mathbb{Q}[G]^-$$
   
\end{definition}
Observe that when $S=S_{\infty}$ the above value recovers the Stickelberger element $\theta_{S'}^T$ defined in \cite{Atsuta-Kataoka-ETNC}, equation (1.4).\\

In the following theorem, we compute the principal Fitting ideal of $\Omega_{S,S'}^T(H)_p$, in terms of the equivariant $L-$function $\Theta_{S,S'}^T$. Observe that, under Assumption \ref{assumption}, implicitly we are also proving that $\Theta_{S,S'}^T(H/F)\in \mathbb{Z}_p[G]^-$.

\begin{theorem}\label{Fitt-Omega}
    Let $p$ be an odd prime and suppose $S,S'$ and $T$ satisfy Assumption \ref{assumption}. Then, we have the following.

    $$Fitt_{\mathbb{Z}_p[G]^-}(\Omega_{S,S'}^T(H)_p)=(\Theta_{S,S'}^T)$$
\end{theorem}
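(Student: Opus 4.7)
The plan is to reduce the theorem to the case $S = S_\infty$, which is the ETNC statement reformulated by Atsuta-Kataoka \cite{Atsuta-Kataoka-ETNC} and proved by Bullach-Burns-Daoud-Seo \cite{Bullach-Burns-Daoud-Seo} and Dasgupta-Kakde-Silliman \cite{Dasgupta-Kakde-Silliman-ETNC}, and then transfer the resulting Fitting ideal equality to general $S$ via Lemma \ref{change of maps}. First I would observe that Assumption \ref{assumption} for the data $(H/F,S,S',T)$ is equivalent to the corresponding assumption for the data $(H/F,S_\infty,S_f\cup S',T)$, since both transformations preserve $S\cup S'$ and leave $T$ and $\mu(H)_p^T$ unchanged. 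Applying the ETNC to the latter data then yields
$$Fitt_{\mathbb{Z}_p[G]^-}(\Omega_{S_\infty,S_f\cup S'}^T(H)_p) = (\Theta_{S_\infty,S_f\cup S'}^T).$$

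Next I would use Lemma \ref{change of maps} to compare presentations. Fixing a finite presentation of $V^{\theta,-}_p$ by a free $\mathbb{Z}_p[G]^-$-module, the lemma shows that the matrix of $\Psi_{S,S'}$ into the shared target $\bigoplus_{S''\setminus\{v_0\}}\mathbb{Z}_p[G]^-$ is obtained from the matrix of $\Psi_{S_\infty,S_f\cup S'}$ by left-multiplication by a diagonal matrix $D$ with entries $(1-e_{I_v}\phi_v^{-1})h_v^{-1}$ on rows indexed by $v\in S_f$ and $1$ elsewhere. Every $r\times r$ minor of the new matrix (where $r=|S''\setminus\{v_0\}|$) is $\det(D)$ times the corresponding minor of the old matrix, which gives
$$Fitt_{\mathbb{Z}_p[G]^-}(\Omega_{S,S'}^T(H)_p) = \prod_{v\in S_f}\bigl((1-e_{I_v}\phi_v^{-1})h_v^{-1}\bigr)\cdot(\Theta_{S_\infty,S_f\cup S'}^T),$$
where integrality of the right-hand side is forced by that of the left.

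The final step is the direct computation
$$\prod_{v\in S_f}(1-e_{I_v}\phi_v^{-1})h_v^{-1}\cdot\Theta_{S_\infty,S_f\cup S'}^T = \Theta_{S_\infty}^T\prod_{v\in S_f}(1-e_{I_v}\phi_v^{-1})\prod_{v\in S'}h_v = \Theta_S^T\prod_{v\in S'}h_v = \Theta_{S,S'}^T,$$
using the Euler-factor identity $\Theta_S^T = \Theta_{S_\infty}^T\prod_{v\in S_f}(1-e_{I_v}\phi_v^{-1})$ together with $h_v = 1-e_{I_v}(\phi_v^{-1}-|I_v|)$. The main obstacle is the second step above: the scalars $(1-e_{I_v}\phi_v^{-1})h_v^{-1}$ lie in $\mathbb{Q}_p[G]^-$ and are generally neither units nor integral in $\mathbb{Z}_p[G]^-$, so one must justify that the purely rational equality of Lemma \ref{change of maps} produces an honest integral Fitting-ideal equality. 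This is handled by the observation that both $\Psi_{S,S'}$ and $\Psi_{S_\infty,S_f\cup S'}$ take values in the same integral target, so comparing $r\times r$ minors of their presentation matrices directly yields an \emph{a priori} integral identity; as a bonus, this also proves the implicit integrality statement $\Theta_{S,S'}^T\in\mathbb{Z}_p[G]^-$.
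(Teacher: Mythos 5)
Your proof is correct and follows essentially the same route as the paper: invoke the Atsuta--Kataoka reformulation of $ETNC_p^-$ for the base data $(S_\infty, S_f\cup S', T)$, then transfer to $(S,S')$ via Lemma \ref{change of maps}, with the scalar $\prod_{v\in S_f}(1-e_{I_v}\phi_v^{-1})h_v^{-1}$ converting $\Theta_{S_\infty,S_f\cup S'}^T$ into $\Theta_{S,S'}^T$. The only cosmetic difference is that the paper exploits the freeness of $(V^{\theta,-})_p$ (from Appendix A of \cite{Dasgupta-Kakde}) to get square presentation matrices and compare determinants directly, whereas you compare maximal minors of a general free presentation; both resolve the integrality issue in the same way, namely that the minors of the integral matrix for $\Psi_{S,S'}$ are \emph{a priori} in $\mathbb{Z}_p[G]^-$.
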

\begin{proof}
In \cite{Atsuta-Kataoka-ETNC}, Kataoka and Atsuda proved that $ETNC_p^-$ is equivalent to the following.  
$$Fitt_{\mathbb{Z}_p[G]^-}(\Omega_{S_{\infty},S_f\cup S'}^T(H)_p)=(\Theta_{S_{\infty},S_f\cup S'}^T)$$
See Theorem 4.13 in \cite{Atsuta-Kataoka-ETNC} and Remark \ref{Omega-equiv}. Now, by Appendix A in \cite{Dasgupta-Kakde}, $(V^{\theta,-})_p$ is a free $\mathbb{Z}_p[G]^--$module. Therefore, we have the following quadratic presentations.

$$ (V^{\theta,-})_p\xrightarrow[]{\psi_{S,S'}} \bigoplus_{S''\setminus\{v_0\}}\mathbb{Z}_p[G]^-\xrightarrow[]{} \Omega_{S,S'}^T(H)_p \xrightarrow[]{} 0$$

$$ (V^{\theta,-})_p\xrightarrow[]{\psi_{S_{\infty},S_f\cup S'}} \bigoplus_{S''\setminus\{v_0\}}\mathbb{Z}_p[G]^-\xrightarrow[]{} \Omega_{S_{\infty},S_f\cup S'}^T(H)_p \xrightarrow[]{} 0$$

Now, fix a basis for the free $\mathbb{Z}_p[G]^-$-module $(V^{\theta,-})_p$ and use the standard basis for the module in the middle. Let us denote the corresponding matrix for the map $\psi_{S,S'}$ also by $\psi_{S,S'}$. Then, we have $(det(\psi_{S_{\infty},S_f\cup S'}))=(\Theta_{S_{\infty},S_f\cup S'}^T)$ as ideals in $\mathbb{Z}_p[G]^-$. Since for each $v\in S\cup S', $ $h_v$ is a nonzero divisor, so is $\Theta_{S_{\infty},S_f\cup S'}^T$. Hence, there exist $u\in (\mathbb{Z}_p[G]^-)^{\times}$ such that $det(\psi_{S_{\infty},S_f\cup S'})=u\cdot \Theta_{S_{\infty},S_f\cup S'}^T$. Now we can change the basis of $(V^{\theta,-})_p$ such that $u=1$. Now, by Lemma \ref{change of maps} we have the following.

\begin{align*}
    det(\psi_{S,S'})&= \prod_{v\in S_f}\big[(1-e_{I_v}\phi_v^{-1})h_v^{-1}\big] \cdot
    det(\psi_{S_{\infty},S_f\cup S'}) \\
    &= \prod_{v\in S_f} \big[(1-e_{I_v}\phi_v^{-1})h_v^{-1}\big]\cdot \Theta_{S_{\infty},S_f\cup S'}^T  \\
    &= \Theta_{S , S'}^T 
\end{align*}
Therefore, $Fitt_{\mathbb{Z}_p[G]^-}(\Omega_{S,S'}^T(H)_p)=(det(\psi_{S,S'}))=(\Theta_{S,S'}^T)$ as desired.
\end{proof}
The following corollary is a strengthening of Lemma 2.5 of \cite{gambheera-popescu} which shows the $p-$adic integrality of the special values of $G-$equivariant $L-$function associated to $(H/F,S,T)$ under mild conditions.
\begin{corollary}\label{p-integrality}
    Suppose that $S_{wild}^p \subseteq S\cup T$, $S_{wild}^p\cap S_p \subseteq S$ and $\mu(H)_p^T$ is trivial. Then, $\Theta_S^T(H/F)\in \mathbb{Z}_p[G]^-$.
\end{corollary}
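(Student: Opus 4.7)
The plan is to deduce this corollary directly from Theorem \ref{Fitt-Omega} by specializing to the case $S'=\emptyset$. First I would check that the hypotheses of the corollary are precisely Assumption \ref{assumption} with $S'=\emptyset$: the condition $S_{wild}^p\subseteq S\cup T$ becomes $S_{wild}^p\subseteq S\cup\emptyset\cup T$, the condition $S_{wild}^p\cap S_p\subseteq S$ becomes $S_{wild}^p\cap S_p\subseteq S\cup\emptyset$, and the triviality of $\mu(H)_p^T$ is unchanged. So the module $\Omega_{S,\emptyset}^T(H)$ is well-defined and Theorem \ref{Fitt-Omega} applies.

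Next, I would observe that from the definition of the modified Stickelberger element,
\[
\Theta_{S,\emptyset}^T(H/F)=\Theta_S^T(H/F)\cdot \prod_{v\in\emptyset}(1-e_{I_v}(\phi_v^{-1}-|I_v|))=\Theta_S^T(H/F),
\]
because the product over the empty set is $1$. Therefore Theorem \ref{Fitt-Omega} gives
\[
Fitt_{\mathbb{Z}_p[G]^-}(\Omega_{S,\emptyset}^T(H)_p)=(\Theta_S^T(H/F))
\]
as ideals inside $\mathbb{Z}_p[G]^-$.

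The conclusion then follows by a purely formal observation: the $0$-th Fitting ideal of a finitely generated $\mathbb{Z}_p[G]^-$-module is by construction an ideal of $\mathbb{Z}_p[G]^-$, hence every generator of that ideal lies in $\mathbb{Z}_p[G]^-$. A priori $\Theta_S^T(H/F)$ is only known to live in $\mathbb{Q}_p[G]^-$ (by the Klingen--Siegel rationality stated in \eqref{Klingen-Siegel}), but the equality above forces $\Theta_S^T(H/F)\in \mathbb{Z}_p[G]^-$, as required.

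There is no real obstacle here; the only thing to verify carefully is that Assumption \ref{assumption} truly specializes to the hypotheses of the corollary when $S'=\emptyset$, which is immediate from matching the set-theoretic conditions term by term. The nontrivial input is Theorem \ref{Fitt-Omega} itself (which rests on the minus part of the $p$-adic ETNC), so the entire content of the corollary is extracted from the integrality built into the Fitting ideal of the modified Ritter--Weiss module.
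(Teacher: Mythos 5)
Your proof is correct and follows essentially the same route as the paper: the paper also specializes Theorem \ref{Fitt-Omega} to $S'=\emptyset$, notes that the hypotheses become exactly Assumption \ref{assumption}, and concludes from the identity $\Theta_{S,\emptyset}^T=\Theta_S^T$ together with the integrality built into the Fitting ideal of $\Omega_{S,\emptyset}^T(H)_p$. You have merely spelled out the formal "Fitting ideals are ideals of the ring, so their generators lie in the ring" step that the paper leaves implicit (it is flagged just before Theorem \ref{Fitt-Omega} as an implicit consequence).
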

\begin{proof}
    Observe that under the given conditions $S, \emptyset$ and $T$ satisfies Assumption \ref{assumption}. Hence, by the previous theorem,
    $$\Theta_S^T(H/F)=\Theta_{S,\emptyset}^T(H/F)\in \mathbb{Z}_p[G]^-$$ as desired.
\end{proof}
By Remark \ref{Omega-equiv}, when $S'=\emptyset$ and $S_{ram}(H/F)\subseteq S\cup T$ we have the following generalization of Theorem 1.14(1) of \cite{gambheera-popescu} which is the $p-$part of the Burns-Kurihara-Sano conjecture. In loc.cit. the statement is written in terms of the Selmer module $Sel_S^T(H)_p^-$. But this Selmer module is a transpose of the Ritter-Weiss module in the sense of section 6.1 of \cite{Dasgupta-Kakde}. Therefore, by Lemma 6.1 of loc.cit. they have the same Fitting ideals. Observe that Assumption \ref{assumption} is much relaxed than the assumptions for the above mentioned theorem.
\begin{corollary} \label{burns-kurihara-sano}
    Let $p$ be an odd prime. Suppose $S_{ram} \subseteq S \cup T$, $S_{wild}^p\cap S_p \subseteq S$ and $\mu(H)_p^T$ is trivial. Then, we have the following.

    $$Fitt_{\mathbb{Z}_p[G]^-}(Sel_S^T(H)_p^-)=Fitt_{\mathbb{Z}_p[G]^-}(\nabla_S^T(H)_p^-)=(\Theta_S^T)$$
\end{corollary}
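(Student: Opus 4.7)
The plan is to derive this corollary as a direct specialization of Theorem \ref{Fitt-Omega} together with Remark \ref{Omega-equiv}, and then transfer the conclusion across the Selmer/Ritter--Weiss transpose relationship.

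First I would check that choosing $S'=\emptyset$ puts us in a position to invoke Theorem \ref{Fitt-Omega}. The three clauses of Assumption \ref{assumption} become: $S_{wild}^p\subseteq S\cup T$, $S_{wild}^p\cap S_p\subseteq S$, and $\mu(H)_p^T$ is trivial. The last two are among the hypotheses of the corollary verbatim. For the first, note that any wildly ramified prime (above $p$ or otherwise) lies in $S_{ram}(H/F)$, so the hypothesis $S_{ram}\subseteq S\cup T$ forces $S_{wild}^p\subseteq S\cup T$. Hence Theorem \ref{Fitt-Omega} applies and yields
$$Fitt_{\mathbb{Z}_p[G]^-}\bigl(\Omega_{S,\emptyset}^T(H)_p\bigr)=(\Theta_{S,\emptyset}^T).$$
Unwinding the definition of $\Theta_{S,S'}^T$, the empty product over $S'$ is $1$, so $\Theta_{S,\emptyset}^T=\Theta_S^T$.

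Next I would identify $\Omega_{S,\emptyset}^T(H)_p$ with $\nabla_S^T(H)_p^-$ via Remark \ref{Omega-equiv}: precisely because $S'=\emptyset$ and the hypothesis $S_{ram}(H/F)\subseteq S\cup T$ matches the condition for the remark, the modified Ritter--Weiss module $\Omega_{S,\emptyset}^T(H)$ coincides with the minus part of the Dasgupta--Kakde module $\nabla_S^T(H)$. Tensoring with $\mathbb{Z}_p$ then gives
$$Fitt_{\mathbb{Z}_p[G]^-}\bigl(\nabla_S^T(H)_p^-\bigr)=(\Theta_S^T),$$
which is the second of the two displayed equalities.

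Finally, for the equality with the Selmer Fitting ideal I would appeal, as the discussion preceding the corollary already indicates, to the transpose relationship between $Sel_S^T(H)_p^-$ and $\nabla_S^T(H)_p^-$ established in section 6.1 of \cite{Dasgupta-Kakde}: Lemma 6.1 of loc.\ cit.\ asserts that transposed modules have identical (zeroth) Fitting ideals, which immediately gives $Fitt_{\mathbb{Z}_p[G]^-}(Sel_S^T(H)_p^-)=Fitt_{\mathbb{Z}_p[G]^-}(\nabla_S^T(H)_p^-)$, closing the chain of equalities.

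There is no serious obstacle here: all the substantial input is already packaged into Theorem \ref{Fitt-Omega} (which in turn rests on $ETNC_p^-$ and the comparison of Lemma \ref{change of maps}) and into Remark \ref{Omega-equiv}. The only point worth verifying carefully is the bookkeeping that $S_{wild}^p\subseteq S_{ram}\subseteq S\cup T$, i.e.\ that no extra hypothesis on wild ramification beyond $S_{ram}\subseteq S\cup T$ is needed, which is immediate.
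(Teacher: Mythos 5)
Your proposal is correct and reproduces exactly the paper's own argument: specialize Theorem \ref{Fitt-Omega} to $S'=\emptyset$ (checking that $S_{wild}^p\subseteq S_{ram}\subseteq S\cup T$ delivers the first clause of Assumption \ref{assumption}), identify $\Omega_{S,\emptyset}^T(H)_p$ with $\nabla_S^T(H)_p^-$ via Remark \ref{Omega-equiv}, note $\Theta_{S,\emptyset}^T=\Theta_S^T$, and transfer to the Selmer module via the transpose relation and Lemma 6.1 of Dasgupta--Kakde. Nothing is missing, and the bookkeeping on the assumptions is exactly the point the paper itself emphasizes.
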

This corollary for each odd prime $p$ immediately implies a stronger version of the global Burns-Kurihara-Sano conjecture. (See Theorem 1.14(2) of \cite{gambheera-popescu})

\begin{corollary} \label{global-Burns-Kurihara-Sano}
Suppose that $S_{ram} \subseteq S \cup T$, $\mu(H)^T$ is trivial and $T$ does not contain any wildly ramified primes away from 2. Then, we have the following.

    $$Fitt_{\mathbb{Z}[G]^-}(Sel_S^T(H)^-)=Fitt_{\mathbb{Z}[G]^-}(\nabla_S^T(H)^-)=(\Theta_S^T)$$
\end{corollary}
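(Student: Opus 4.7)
The plan is to deduce the global identity by applying the $p$-adic Corollary \ref{burns-kurihara-sano} at every odd prime $p$ and then gluing. The verification amounts to (i) checking that the global hypotheses imply the hypotheses of the $p$-adic version for every odd $p$, (ii) producing global integrality of $\Theta_S^T$, and (iii) invoking a local-to-global principle for Fitting ideals.

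First I would verify the hypotheses of Corollary \ref{burns-kurihara-sano} for every odd prime $p$. The condition $S_{ram} \subseteq S \cup T$ transfers verbatim, and triviality of $\mu(H)^T$ at once gives triviality of $\mu(H)_p^T$. For the condition $S_{wild}^p \cap S_p \subseteq S$, suppose $v \in S_{wild}^p \cap S_p$. Then $v\mid p$ and $p\mid |I_v|$, so $v$ is wildly ramified in $H/F$ (since $p$ is the residue characteristic of $v$). Because $p$ is odd, the hypothesis that $T$ contains no wildly ramified primes away from $2$ forces $v\notin T$; combined with $S_{ram}\subseteq S\cup T$, this gives $v\in S$, as required.

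Having verified the hypotheses, Corollary \ref{burns-kurihara-sano} yields
$$Fitt_{\mathbb{Z}_p[G]^-}(\nabla_S^T(H)_p^-) = Fitt_{\mathbb{Z}_p[G]^-}(Sel_S^T(H)_p^-) = (\Theta_S^T)$$
for every odd prime $p$. In particular, Corollary \ref{p-integrality} gives $\Theta_S^T \in \mathbb{Z}_p[G]^-$ for every odd $p$; combined with the a priori inclusion $\Theta_S^T\in\mathbb{Q}[G]^-$ from \eqref{Klingen-Siegel}, this forces $\Theta_S^T\in\mathbb{Z}[G]^-$, since every maximal ideal of the finitely generated $\mathbb{Z}[1/2]$-algebra $\mathbb{Z}[G]^-$ lies over an odd rational prime.

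For the final gluing step I would use that $\nabla_S^T(H)^-$ and $Sel_S^T(H)^-$ are finitely presented $\mathbb{Z}[G]^-$-modules (by their constructions as cokernels of maps between finitely generated modules), so their Fitting ideals commute with the flat base change $\mathbb{Z}[G]^- \to \mathbb{Z}_p[G]^- = \mathbb{Z}[G]^-\otimes_{\mathbb{Z}}\mathbb{Z}_p$. Consequently
$$Fitt_{\mathbb{Z}[G]^-}(\nabla_S^T(H)^-)\otimes_{\mathbb{Z}}\mathbb{Z}_p = Fitt_{\mathbb{Z}_p[G]^-}(\nabla_S^T(H)_p^-) = (\Theta_S^T)\cdot \mathbb{Z}_p[G]^-$$
for every odd $p$, and likewise for $Sel_S^T(H)^-$. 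Two finitely generated fractional ideals of $\mathbb{Z}[G]^-$ that agree after tensoring with $\mathbb{Z}_p$ at every odd rational prime must coincide (every prime of $\mathbb{Z}[G]^-$ sits over an odd prime of $\mathbb{Z}[1/2]$), so the global equality of Fitting ideals follows. The main conceptual point — and the only real hazard — is the local-to-global passage: one must be sure that the constructions of $\nabla_S^T(H)^-$ and $Sel_S^T(H)^-$ are compatible with $\otimes_{\mathbb{Z}}\mathbb{Z}_p$, which follows from the exactness of this base change together with the finiteness built into the defining sequence \eqref{nabla-sequences}.
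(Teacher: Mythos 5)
Your proposal is correct and follows the same route the paper intends: apply the $p$-adic Corollary \ref{burns-kurihara-sano} at every odd prime and glue, which is exactly what the paper asserts when it says the global statement follows "for each odd prime $p$" without further elaboration. You supply the details the paper omits — in particular the observation that a prime $v\in S_{wild}^p\cap S_p$ is $p$-wildly ramified (with $p$ odd), hence excluded from $T$ by hypothesis and forced into $S$ via $S_{ram}\subseteq S\cup T$; the deduction of global integrality of $\Theta_S^T$ from $p$-integrality at all odd $p$ together with rationality; and the standard local-to-global principle for Fitting ideals over the $\mathbb{Z}[1/2]$-algebra $\mathbb{Z}[G]^-$, all of which are sound.
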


Now let us record a generalization of Kurihara's conjecture. First we show the $p-$part of it. Notice that in the original formulation, (Conjecture 3.2 of \cite{Kurihara}) proved by Dasgupta-Kakde in Appendix B.2 of \cite{Dasgupta-Kakde}, all primes in $T$ were unramified. Here we relax that condition.
\begin{theorem}\label{strong-kurihara}
    Suppose $\mu(H)_p^T$ is trivial and $T\cap S_{wild}^p\cap S_p=\emptyset$ (that is $T$ does not contain any wildly ramified primes above $p$). If $A^T(H)$ is the $p-$Sylow subgroup $Cl^T(H)$, then we have the following equality of ideals in $\mathbb{Z}_p[G]^-$. 
    $$Fitt_{\mathbb{Z}_p[G]^-}(A^T(H)^{\vee,-})= (\Theta_{S_{\infty}}^T)\prod_{v\in S_{ram}(H/F)\setminus T} (N(I_v), 1-e_{I_v}\sigma_v^{-1})$$
\end{theorem}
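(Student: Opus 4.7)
My plan is to apply Theorem \ref{Fitt-Omega} with the auxiliary sets $(S,S') = (S_\infty, S_{ram}\setminus T)$, then exploit the short exact sequence \eqref{nabla-sequences} together with Pontryagin duality, with the aid of the shifted Fitting ideal computations of Section 2, to extract the Fitting ideal of $A^T(H)^{\vee,-}$. First I verify Assumption \ref{assumption} for this choice: every wildly ramified prime of $H/F$ lies in $S_{ram} = T\cup(S_{ram}\setminus T) \subseteq T\cup S'$; every wildly ramified prime above $p$ lies in $S_{ram}\setminus T = S'$ by the hypothesis $T\cap S_{wild}^p\cap S_p = \emptyset$; and $\mu(H)_p^T$ is trivial. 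Hence Theorem \ref{Fitt-Omega} applies and yields
\[
Fitt_{\mathbb{Z}_p[G]^-}\bigl(\Omega_{S_{\infty}, S_{ram}\setminus T}^T(H)_p\bigr) = \Bigl(\Theta_{S_{\infty}}^T\prod_{v\in S_{ram}\setminus T}h_v\Bigr),
\]
where $h_v := 1 - e_{I_v}\phi_v^{-1}+N(I_v)$ is exactly the element from Theorem \ref{A-shifted-Fitt}.

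Next I specialize the short exact sequence \eqref{nabla-sequences}. Since $S_{\infty}$ contains no finite places, $Cl_{S_{\infty}}^T(H) = Cl^T(H)$, so $A^T(H)^- = Cl^T(H)_p^-$. Moreover, since $G_v = \langle c\rangle$ at every archimedean place $v$ and acts trivially on the induced module $\mathbb{Z}[G/G_v]$, one has $Y_{S_{\infty}}^- = 0$. Hence the sequence simplifies to
\[
0\longrightarrow A^T(H)^-\longrightarrow \Omega_{S_{\infty},S_{ram}\setminus T}^T(H)_p\longrightarrow Z_{S_{ram}\setminus T,p}^-\longrightarrow 0,
\]
in which each summand of $Z_{S_{ram}\setminus T,p}^-$ is precisely an instance of the module $A$ from Section 2 (with $I = I_v$, $\sigma = \phi_v$, and $D = G_v$).

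The final step is to Pontryagin-dualize this sequence and compute $Fitt(A^T(H)^{\vee,-})$ using two ingredients: (i) the Fitting ideal of $\Omega^\vee$ agrees with that of $\Omega$ up to the anti-involution $g\mapsto g^{-1}$ via the transpose construction of Lemma 6.1 of \cite{Dasgupta-Kakde}; and (ii) the shifted Fitting ideal formula of Theorem \ref{A-shifted-Fitt} applied to each local summand, giving $(h_v)\cdot Fitt^{[1]}(A_v) = (N(I_v), (1-e_{I_v}\sigma_v^{-1})\mathcal{J}(I_v))$. Combining these in the dualized exact sequence, the $h_v$ factors in $Fitt(\Omega)$ cancel against those in the shifted Fitting ideals of the local modules, yielding
\[
Fitt_{\mathbb{Z}_p[G]^-}(A^T(H)^{\vee,-}) = (\Theta_{S_{\infty}}^T)\prod_{v\in S_{ram}(H/F)\setminus T}(N(I_v), 1 - e_{I_v}\sigma_v^{-1}).
\]

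The main obstacle is this last step, where the precise interplay between Pontryagin duality and the shifted Fitting ideals must be handled carefully. At tamely ramified primes $v$ the inertia group $I_v$ is cyclic and $\mathcal{J}(I_v) = (1)$, so the factor $(N(I_v), 1-e_{I_v}\sigma_v^{-1})$ emerges directly from Theorem \ref{A-shifted-Fitt}. At wildly ramified primes above $p$, however, $I_v$ may be non-cyclic and $\mathcal{J}(I_v)$ is nontrivial; a delicate analysis tracking how the complex conjugation $c$ interacts with the cyclic decomposition of $I_v$ in the minus part is required to verify that the $\mathcal{J}(I_v)$ factor does not contribute to the final formula.
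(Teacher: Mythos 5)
Your starting point is reasonable and you verify Assumption \ref{assumption} correctly for the choice $(S,S',T)=(S_\infty, S_{ram}\setminus T, T)$. However, your proposal contains a genuine gap in the crucial final step, and your closing worry about the $\mathcal{J}(I_v)$ factor is pointing at a real problem that you misdiagnose.

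The issue is that the chain of reasoning you propose — apply Theorem \ref{Fitt-Omega}, plug into the exact sequence \eqref{nabla-sequences} with $S=S_\infty$, and invoke the shifted Fitting ideal formula of Theorem \ref{A-shifted-Fitt} — computes $Fitt_{\mathbb{Z}_p[G]^-}(A^T(H)^-)$, not $Fitt_{\mathbb{Z}_p[G]^-}(A^T(H)^{\vee,-})$. Indeed, Proposition \ref{equality} with $S=S_\infty$ (so $S_f=\emptyset$ and $M$ trivial) gives $Fitt(A^{T,-}) = (\Theta_{S_\infty,S'}^T)\,Fitt^{[1]}((Z_{S'})_p^-)$, and combining with Theorem \ref{A-shifted-Fitt} yields
\[
Fitt_{\mathbb{Z}_p[G]^-}(A^T(H)^{-}) = (\Theta_{S_\infty}^T)\prod_{v\in S_{ram}\setminus T}\bigl(N(I_v),\,(1-e_{I_v}\sigma_v^{-1})\mathcal{J}(I_v)\bigr).
\]
The $\mathcal{J}(I_v)$ factors do \emph{not} disappear in the minus part: at a wildly ramified prime with non-cyclic $I_v$, $\mathcal{J}(I_v)$ is a proper ideal and genuinely contributes. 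The theorem you are asked to prove concerns the Pontryagin dual $A^T(H)^{\vee,-}$, whose Fitting ideal is a different object. Your proposed shortcut — ``the Fitting ideal of $\Omega^\vee$ agrees with that of $\Omega$ up to the anti-involution via the transpose construction of Lemma 6.1 of \cite{Dasgupta-Kakde}'' — is not correct: Lemma 6.1 relates the Fitting ideal of a module to that of a specific syzygy-theoretic transpose, not to the Pontryagin dual, and in any case applying $g\mapsto g^{-1}$ to the formula above does not remove the $\mathcal{J}(I_v)$ factors, since these ideals are stable under that involution. Passing from $Fitt(A^{T,-})$ to $Fitt(A^{T,\vee,-})$ requires a genuine duality argument.

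This is precisely why the paper's proof does not proceed along your route. Instead it invokes Corollary \ref{burns-kurihara-sano} (the $S'=\emptyset$, $S_{ram}\subseteq S\cup T$ case of Theorem \ref{Fitt-Omega}) as an input and then follows the entire argument of Appendix B.2 of \cite{Dasgupta-Kakde} almost verbatim, with $\Sigma=S_\infty\cup\{v\in S_{ram}: p\mid v\}\setminus T$ and $S_{ram}$ replaced by $S_{ram}\setminus T$. That appendix is precisely where the passage to the Pontryagin dual and the disappearance of the would-be $\mathcal{J}(I_v)$ contributions are carried out, via duality between the Ritter--Weiss module and the Selmer module and a careful bookkeeping through several exact sequences. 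Your sketch, as written, has no substitute for this step, and without it the formula you would prove is the one for $A^{T,-}$, which is different from the statement of the theorem.
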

\begin{proof}{(sketch)}
    The proof of presented in the appendix B.2 of \cite{Dasgupta-Kakde} works word to word with the precise adjustments which we are going to specify now. The proof of Dasgupta-Kakde relies on the Theorem 3.7 of \cite{Dasgupta-Kakde}. One can see that this theorem is still true if we use our conditions of $T$ and if we take $\Sigma$ to be the set $S_{\infty}\cup\{v\in S_{ram};p\mid v\}\setminus T$ which can be proved using Corollary \ref{burns-kurihara-sano} (instead of Theorem 3.3 of \cite{Dasgupta-Kakde} as the authors have done in \cite{Dasgupta-Kakde}). Therefore, Lemma B.7 of \cite{Dasgupta-Kakde} is true with this new $\Sigma$. Now, one can check that the rest of appendix B.2 of \cite{Dasgupta-Kakde} works word to word with our conditions on $T$ and $S_{ram}$ replaced by $S_{ram}\setminus T$. This completes the proof.
\end{proof}
Now, by using the above theorem for each odd prime $p$, we have the following generalization of the Kurihara's conjecture where we allow $T$ to have tamely ramified primes.
\begin{theorem}\label{global-strong-Kurihara}
    Suppose $\mu(H)^T$ is trivial and $T$ does not contain any wildly ramified primes away from 2. Then we have the following equality of ideals in $\mathbb{Z}[G]^-$. 
    $$Fitt_{\mathbb{Z}[G]^-}(Cl^{T}(H)^{\vee,-})= (\Theta_{S_{\infty}}^T)\prod_{v\in S_{ram}(H/F)\setminus T} (N(I_v), 1-e_{I_v}\sigma_v^{-1})$$
\end{theorem}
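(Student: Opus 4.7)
The plan is to deduce this global statement from its $p$-adic counterpart, Theorem \ref{strong-kurihara}, by a standard local-global argument that exploits the fact that $\mathbb{Z}[G]^- = \mathbb{Z}[1/2][G]/(1+c)$ already has $2$ inverted. First I would check that for every odd prime $p$ the hypotheses of Theorem \ref{strong-kurihara} hold. Triviality of $\mu(H)^T$ immediately yields triviality of its $p$-Sylow subgroup $\mu(H)_p^T$. For the condition $T \cap S_{wild}^p \cap S_p = \emptyset$: if some $v \in T$ lies above the odd prime $p$, then $v \in S_{wild}^p$ means $p \mid |I_v|$, and since $v$ lies above residue characteristic $p$, the $p$-Sylow of $I_v$ coincides with the wild inertia $P_v$; so $v$ is wildly ramified in the classical sense. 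But $v$ lies above the odd prime $p$, hence not above $2$, and the hypothesis forces $v \notin T$, a contradiction. Therefore Theorem \ref{strong-kurihara} applies and yields, for every odd $p$,
$$Fitt_{\mathbb{Z}_p[G]^-}(A^T(H)^{\vee,-}) = (\Theta_{S_\infty}^T) \prod_{v \in S_{ram}(H/F) \setminus T} (N(I_v),\, 1-e_{I_v}\sigma_v^{-1}).$$

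Next I would globalize via the injection $\mathbb{Z}[G]^- \hookrightarrow \prod_{p \text{ odd}} \mathbb{Z}_p[G]^-$, under which a fractional ideal of $\mathbb{Z}[G]^-$ with bounded denominators is determined by its $p$-adic completions at odd primes. The base change $\mathbb{Z}[G]^- \to \mathbb{Z}_p[G]^-$ is flat, so Fitting ideals commute with it; moreover, since $Cl^T(H)$ is a finite abelian group, Pontryagin dualization commutes with taking the $p$-primary part and we have $Cl^T(H)^{\vee,-} \otimes_{\mathbb{Z}} \mathbb{Z}_p = A^T(H)^{\vee,-}$. Hence the left-hand side of the desired identity localizes correctly at each odd $p$. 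The right-hand side is a product of finitely generated fractional ideals; such products commute with flat base change factor-by-factor, so the right-hand side also localizes correctly. Assembling the $p$-adic equalities established above then gives the required equality of fractional ideals in $\mathbb{Z}[G]^-$.

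The main obstacle is really only the combinatorial verification of the wild-ramification condition in the first step; once this is settled, the rest is the formal $2$-invertible local-global machinery. A mild book-keeping issue is to ensure that both sides are genuine fractional ideals of $\mathbb{Z}[G]^-$ with bounded denominators so that the local-global principle literally applies, but this is automatic from Corollary \ref{p-integrality} (to control the integrality of $\Theta_{S_\infty}^T$ up to the auxiliary Euler factors at $S_{ram}\setminus T$) together with the fact that the left-hand side is the Fitting ideal of a finite module, hence an integral ideal.
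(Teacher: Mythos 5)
Your proposal is correct and follows the same approach the paper takes, which is simply to invoke Theorem \ref{strong-kurihara} at each odd prime $p$ and globalize (the paper gives no further details). Your verification that the hypothesis ``$T$ contains no wildly ramified primes away from $2$'' descends to $T\cap S_{wild}^p\cap S_p=\emptyset$ for every odd $p$, and your identification $Cl^T(H)^{\vee,-}\otimes_{\mathbb{Z}}\mathbb{Z}_p = A^T(H)^{\vee,-}$ together with the flat base-change compatibility of Fitting ideals, supply exactly the routine details the paper leaves implicit.
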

We end the section with the following technical proposition which will be used in the calculations of the next section.
\begin{proposition}\label{equality}
 Let $p$ be an odd prime and suppose $S, S'$ and $T$ satisfy Assumption \ref{assumption}. If $A_S^T$ is the $p-$Sylow subgroup $Cl_S^T(H)$ then we have the following equality of ideals in $\mathbb{Z}_p[G]^-$.

$$Fitt_{\mathbb{Z}_p[G]^-}(A_S^{T,-})\prod_{v\in S_f}(\sigma_v-1)=(\Theta_{S,S'}^T)Fitt_{\mathbb{Z}_p[G]^-}^{[1]}((Z_{S'})_p^-)Fitt_{\mathbb{Z}_p[G]^-}(M_p^-)$$
Here, $\sigma_v$ is some choice of Frobenius of the place $v$ and $M=\prod_{v\in S_f} M_v$ where $M_v$ is the following quotient of the ideals of $\mathbb{Z}[G]^-$. $$M_v=(\sigma_v-1, i-1; i\in I_v)/(\sigma_v-1)$$ 
\end{proposition}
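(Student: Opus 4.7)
The plan is to combine the short exact sequence \eqref{nabla-sequences} with the quadratic presentation of $\Omega_{S,S'}^T(H)_p$ established in the proof of Theorem~\ref{Fitt-Omega}. Tensoring \eqref{nabla-sequences} with $\mathbb{Z}_p$ and taking minus parts gives
$$0\to A_S^{T,-}\to \Omega_{S,S'}^T(H)_p\to (Y_S)_p^-\oplus (Z_{S'})_p^-\to 0,$$
where $(Y_S)_p^-=\bigoplus_{v\in S_f}\mathbb{Z}_p[G/G_w]^-$, since the archimedean components vanish in the minus part (as $G_w=\langle c\rangle$ for $v\in S_\infty$). Recall from the proof of Theorem~\ref{Fitt-Omega} the quadratic presentation $(V^{\theta,-})_p \xrightarrow{\psi_{S,S'}} P_0 \to \Omega_{S,S'}^T(H)_p\to 0$, where $P_0 = \bigoplus_{v\in S''\setminus\{v_0\}}\mathbb{Z}_p[G]^-$ is free and one can arrange a basis so that $\det(\psi_{S,S'}) = \Theta_{S,S'}^T$.

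Next, I would pull back along the composite surjection $P_0 \twoheadrightarrow \Omega_{S,S'}^T(H)_p \twoheadrightarrow (Y_S)_p^-\oplus(Z_{S'})_p^-$ by setting $Q:=\ker\bigl(P_0\to (Y_S)_p^-\oplus(Z_{S'})_p^-\bigr)$. This yields a two-step presentation $(V^{\theta,-})_p\xrightarrow{\tilde{\psi}_{S,S'}}Q\to A_S^{T,-}\to 0$ of the target module, together with $0\to Q\to P_0\to (Y_S)_p^-\oplus(Z_{S'})_p^-\to 0$. By the explicit description of the map $P_0\to(Y_S)_p^-\oplus(Z_{S'})_p^-$ coming from Proposition~\ref{local map - W}, the submodule $Q\subseteq P_0$ decomposes componentwise: for $v\in S_f$ its $v$-component is the ideal $(\sigma_v-1,\,i-1\mid i\in I_v)\subseteq\mathbb{Z}_p[G]^-$; for $v\in S'$ it is $(1-\phi_v^{-1}+|I_v|,\,i-1\mid i\in I_v)$; and for $v\in S''\setminus(S\cup S'\cup\{v_0\})$ it is the whole ring $\mathbb{Z}_p[G]^-$.

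The Fitting ideal $\mathrm{Fitt}(A_S^{T,-})$ is then computed from the presentation by lifting a chosen generating set of each ideal component of $Q$ back to $P_0$ and expanding the resulting determinantal expression for $\tilde{\psi}_{S,S'}$ in terms of the known determinant $\det(\psi_{S,S'})=\Theta_{S,S'}^T$. For each $v\in S_f$, distinguishing the generator $\sigma_v-1$ of the $v$-component of $Q$ from the remaining generators $\{i-1: i\in I_v\}$ explains the factor $\prod_{v\in S_f}(\sigma_v-1)$ that appears on the left-hand side: using $\sigma_v-1$ as the ``base" lift yields $\prod_{v\in S_f}(\sigma_v-1)^{-1}\cdot\Theta_{S,S'}^T$, while the extra generators $\{i-1:i\in I_v\}$ contribute exactly the additional relations producing $\mathrm{Fitt}(M_p^-)$, by the very definition of $M_v=(\sigma_v-1,i-1;i\in I_v)/(\sigma_v-1)$. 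The contribution from each $v\in S'$ is captured by Theorem~\ref{A-shifted-Fitt} applied with $D=G_w$ and $I=I_w$, which identifies the ideal-theoretic data of the $v$-component of $Q$ with the $v$-factor of $\mathrm{Fitt}^{[1]}((Z_{S'})_p^-)$; Lemma~\ref{Shifted-Fitt-generators} is essential here, as it gives explicit generators compatible with the determinantal expansion.

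The main obstacle will be carrying out the determinantal bookkeeping in the last paragraph cleanly, in particular verifying that the combinatorics of the minors of $\tilde{\psi}_{S,S'}$ produces \emph{exactly} the product $(\Theta_{S,S'}^T)\cdot\mathrm{Fitt}^{[1]}((Z_{S'})_p^-)\cdot\mathrm{Fitt}(M_p^-)$ on the right with no over- or undercounting across the $v\in S_f$ and $v\in S'$ components, and that the non-zero-divisor property of $\Theta_{S,S'}^T$ (together with the analogous property of the $v$-local factors $h_v$ from Lemma~\ref{change of maps}) upgrades the natural inclusions of ideals arising from $\mathrm{Fitt}(A)\cdot\mathrm{Fitt}(C)\subseteq\mathrm{Fitt}(B)$ to the desired equality.
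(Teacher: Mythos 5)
Your starting point is correct and matches the paper: tensoring \eqref{nabla-sequences} with $\mathbb{Z}_p$ to get $0\to A_S^{T,-}\to\Omega_{S,S'}^T(H)_p\to (Y_{S_f})_p^-\oplus(Z_{S'})_p^-\to 0$, together with the quadratic presentation of $\Omega_{S,S'}^T(H)_p$ with determinant $\Theta_{S,S'}^T$. Your identification of $Q=\ker\bigl(P_0\to (Y_{S_f})_p^-\oplus(Z_{S'})_p^-\bigr)$ and its componentwise description is also correct. However, the central step --- computing $\mathrm{Fitt}_{\mathbb{Z}_p[G]^-}(A_S^{T,-})$ from the presentation $(V^{\theta,-})_p\to Q\to A_S^{T,-}\to 0$ by ``lifting a chosen generating set of each ideal component of $Q$ back to $P_0$ and expanding the resulting determinantal expression'' --- has a genuine gap. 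Since $Q$ is not a free $\mathbb{Z}_p[G]^-$--module, this is not a quadratic presentation of $A_S^{T,-}$, and there is no direct way to read off the Fitting ideal by a determinant. To make such a computation rigorous you would need to resolve $Q$ by a free module, splice in the generators of the relation module of $Q$, and then argue that the resulting maximal minors factor as the stated product; this is precisely the bookkeeping you flag at the end as ``the main obstacle,'' and as written you have not supplied a mechanism that avoids over- or undercounting the relations.

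The paper sidesteps this entirely. The key ingredient you are missing is the multiplicativity of Fitting ideals for short exact sequences with quadratically presented middle term (Lemma 2.7 of Dasgupta--Kakde), applied to \emph{two} exact sequences sharing the same third term $(Y_{S_f})_p^-\oplus(Z_{S'})_p^-$: the one above with middle term $\Omega_{S,S'}^T(H)_p$, and a second one with middle term $\prod_{v\in S_f}\mathbb{Z}_p[G]^-/(1-\sigma_v)\times\prod_{v\in S'}\mathbb{Z}_p[G]^-/(1-\sigma_v^{-1}+|I_v|)$ and kernel $M_p^-\oplus N_p^-$, where $N_v=(1-\sigma_v^{-1}+|I_v|,\,i-1;\,i\in I_v)/(1-\sigma_v^{-1}+|I_v|)$. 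Cross-multiplying the two instances of Lemma 2.7 gives
$$\mathrm{Fitt}(A_S^{T,-})\prod_{v\in S_f}(\sigma_v-1)\prod_{v\in S'}(1-\sigma_v^{-1}+|I_v|)=\mathrm{Fitt}(\Omega_{S,S'}^T(H)_p)\,\mathrm{Fitt}(N_p^-)\,\mathrm{Fitt}(M_p^-),$$
and then the identity $\mathrm{Fitt}(N_p^-)=\prod_{v\in S'}(1-\sigma_v^{-1}+|I_v|)\cdot\mathrm{Fitt}^{[1]}((Z_{S'})_p^-)$, which follows because $\prod_{v\in S'}\mathbb{Z}_p[G]^-/(1-\sigma_v^{-1}+|I_v|)$ has projective dimension one. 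Substituting $\mathrm{Fitt}(\Omega_{S,S'}^T(H)_p)=(\Theta_{S,S'}^T)$ from Theorem~\ref{Fitt-Omega} and cancelling the nonzero divisors $\prod_{v\in S'}(1-\sigma_v^{-1}+|I_v|)$ gives the claim. I would recommend abandoning the direct determinantal route in favor of this two-sequence comparison: it is both cleaner and actually complete.
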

\begin{proof}
    By tensoring the short exact sequence \ref{nabla-sequences} with $\mathbb{Z}_p$ and observing that $Y_{S_{\infty}}^-=0$, we get the following.
    $$0\xrightarrow[]{} A_S^T(H)^-\xrightarrow[]{}\Omega_{S,S'}^T(H)_p\xrightarrow[]{} (Y_{S_f})_p^- \oplus (Z_{S'})_p^-\xrightarrow[]{} 0$$
    Let $N=\prod_{v\in S'} N_v$ where for each $v\in S'$, 
    $$N_v=(1-\sigma_v^{-1}+|I_v|,i-1;i\in I_v)/(1-\sigma_v^{-1}+|I_v|).$$ We also have the following short exact sequence.
    $$0\xrightarrow[]{} M_p^-\oplus N_p^-
 \xrightarrow[]{}\prod_{v\in S_f}\mathbb{Z}_p[G]^-/(1-\sigma_v)\times\prod_{v\in S'}\mathbb{Z}_p[G]^-/(1-\sigma_v^{-1}+|I_v|) \xrightarrow[]{} (Y_{S_f})_p^- \oplus (Z_{S'})_p^-\xrightarrow[]{} 0$$
    Observe that in the two short exact sequences above, the modules in the middle are quadratically presented. Then, by Lemma 2.7 in \cite{Dasgupta-Kakde}, we have,

\begin{equation}\label{long-equation}
    \begin{aligned}
        Fitt_{\mathbb{Z}_p[G]^-}(A_S^{T,-})\prod_{v\in S_f}(\sigma_v-1)\prod_{v\in S'}(1-\sigma_v^{-1}+|I_v|)=Fitt_{\mathbb{Z}_p[G]^-}& (\Omega_{S,S'}^T(H)_p)\cdot Fitt_{\mathbb{Z}_p[G]^-}(N_p^-) \\
& \times      Fitt_{\mathbb{Z}_p[G]^-}(M_p^-)
    \end{aligned}
\end{equation}
On the other hand, we have the following short exact sequence.
$$0\xrightarrow[]{}  N_p^-
 \xrightarrow[]{} \prod_{v\in S'}\mathbb{Z}_p[G]^-/(1-\sigma_v^{-1}+|I_v|) \xrightarrow[]{} (Z_{S'})_p^-\xrightarrow[]{} 0$$
Observe that,

$$Fitt_{\mathbb{Z}_p[G]^-}\Big(\prod_{v\in S'}\mathbb{Z}_p[G]^-/(1-\sigma_v^{-1}+|I_v|)\Big)=\prod_{v\in S'}(1-\sigma_v^{-1}+|I_v|)$$
Now, since $1-\sigma_v^{-1}+|I_v|$ is clearly a nonzero divisor in $\mathbb{Z}_p[G]^-$ for all $v$, the module, by Proposition 4.9 of \cite{gambheera-popescu} $\prod_{v\in S'}\mathbb{Z}_p[G]^-/(1-\sigma_v^{-1}+|I_v|)$ has projective dimension $1$. Hence, we have,
$$Fitt_{\mathbb{Z}_p[G]^-}(N_p^-)=\prod_{v\in S'}(1-\sigma_v^{-1}+|I_v|)\cdot Fitt_{\mathbb{Z}_p[G]^-}^{[1]}(Z_{S'})_p^-$$
By substituting this in equation \ref{long-equation}, applying Theorem \ref{Fitt-Omega} and by canceling out the term $\prod_{v\in S'}(1-\sigma_v^{-1}+|I_v|)$ we get the desired equation.
\end{proof}

\section{Iwasawa theoritic considerations}

In this section we use the results proved in section 4 together with some technical tools developed in section 2 to compute the Fitting ideals of some interesting equivariant Iwasawa modules living at the top of the cyclotomic Iwasawa tower of a CM field. We also give a conjectural description to the Fitting ideal of $X^-$ as mentioned in the introduction.\\

Let us start with our Iwasawa theoretic set up. Let $p$ be an odd prime and let $H/F$ be a finite, abelian CM extension of a totally real number field $F$ with Galois group $G:=Gal(H/F)$. Let $H_{\infty}$ be the cyclotomic $\mathbb{Z}_p-$extension of $H$. Denote $\Gamma:=Gal(H_{\infty}/H)$ and $\mathcal{G}:=Gal(H_{\infty}/F)$. Let $S$ and $T$ be two nonempty, disjoint sets of places in $F$ on which we impose the following assumption.

\begin{assumption}\label{assumption-Iwasawa}
    $S_{\infty}\subseteq S$ and $T\cap S_p=\emptyset$.
\end{assumption}
 
As usual, $S_{ram}(H_\infty/F)$ denotes the ramification locus of $H_\infty/F$. Note that $S_p\subseteq S_{ram}(H_\infty/F)$. Further, we let $H_n$ denote the $n$--th layer of the cyclotomic extension $H_\infty/H$, and let $G_n:=Gal(H_n/F)$. As usual, we define the topological, profinite group algebras (the classical Iwasawa algebra and the $G$--equivariant Iwasawa algebra respectively)
$$\Lambda:=\Bbb Z_p[[\Gamma]]:=\varprojlim\limits_n\Bbb Z_p[Gal(H_n/H)]$$
$$\Bbb Z_p[[\mathcal G]]:=\varprojlim\limits_n\Bbb Z_p[G_n]$$
where the transition maps $\Tilde{\pi}_n^{n+1}:{\Bbb Z}_p[Gal(H_{n+1}/H)]\to\Bbb Z_p[Gal(H_n/H)]$ and $\pi_n^{n+1}:{\Bbb Z}_p[G_{n+1}]\to\Bbb Z_p[G_n]$ in the projective limit are  induced by Galois restriction.
\\

Observe that since $H$ is a CM field, $H_n$ and $H_\infty$ are also CM fields, and therefore there is a unique complex conjugation automorphism $c\in \mathcal G$, which restricts to the unique complex conjugation in $G_n$ (which is also denoted by $c$), for all $n\geq 0$. Throughout this chapter, for any $\Bbb Z_p[[\mathcal G]]-$module $M$, we define,
$$M^-:=\frac{1}{2}(1-c)\cdot M.$$
Observe that this is a $\Bbb Z_p[[\mathcal G]]^-$--module, 
where 
$$\Bbb Z_p[[\mathcal G]]^-:=\frac{1}{2}(1+c)\cdot\Bbb Z_p[[\mathcal G]]\cong \Bbb Z_p[[\mathcal G]]/(1+c).$$ 
Now, due to a well known restriction--inflation property of Artin $L$--functions, we have 
$$\pi_n^{n+1}(\Theta_S^T(H_{n+1}/F)=\Theta_S^T(H_n/F), \qquad \text{ for all } n\geq 0.$$
In addition to Assumption \ref{assumption-Iwasawa} if we further assume that $S_{ram}(H_{\infty}/F)\subseteq S\cup T$, by Corollary \ref{p-integrality} we have that, for each $n$, $\Theta_S^T(H_n/F)\in \Bbb Z_p[G_n]^-$. These properties allows us to define the $(S,T)-$modified equivariant $p-$adic L-function:
$$\Theta_S^T(H_{\infty}/F):=(\Theta_S^T(H_n/F))_n\,\in\, \varprojlim\limits_n\Bbb Z_p[G_n]^-=\mathbb{Z}_p[[\mathcal{G}]]^-$$
When there is no risk of confusion we simply write $\Theta_S^T(H_{n})$ and $\Theta_S^T(H_{\infty})$ in place of $\Theta_S^T(H_{n}/F)$ and $\Theta_S^T(H_{\infty}/F)$ respectively.

\begin{definition}
    Define the $\Bbb Z_p[[\mathcal G]]-$modules $X_S^T:=\varprojlim_n A_S^T(H_n)$, $X^T:=\varprojlim_n A^T(H_n)$ and  $X:=\varprojlim_n A(H_n)$ where $A(H_n)$ is the $p$-Sylow subgroup of $Cl(H_n)$. Here the projective limits are taken under the norm map.
\end{definition}
The following proposition says that $X_S^T$ is a genuine equivariant Iwasawa module. Therefore, its Fitting ideal over $\Bbb Z_p[[\mathcal G]]$ makes sense.
\begin{proposition}\label{fgTorsion}
    $X_S^T$ is finitely generated and torsion as a $\Lambda-$module. Hence, it is also a finitely generated and torsion as a $\Bbb Z_p[[\mathcal G]]-$module.
\end{proposition}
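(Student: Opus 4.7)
The plan is to reduce this statement to Iwasawa's classical theorem that $X = \varprojlim_n A(H_n)$ is a finitely generated torsion $\Lambda$-module, by sandwiching $X_S^T$ between two modules that are each controlled by $X$ together with an elementary local contribution coming from the primes in $T$.

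The first step will be to write down, at each finite layer $H_n$, the standard four-term exact sequence of finite abelian groups coming from global class field theory,
\begin{equation*}
\mathcal{O}_{H_n,S}^\times \longrightarrow \prod_{w \in T_{H_n}} k(w)^\times \longrightarrow Cl_S^T(H_n) \longrightarrow Cl_S(H_n) \longrightarrow 0,
\end{equation*}
where $k(w)$ denotes the residue field at $w$. Since the $p$-primary part is an exact functor on finite abelian groups, this yields a short exact sequence
\begin{equation*}
0 \longrightarrow K_n \longrightarrow A_S^T(H_n) \longrightarrow A_S(H_n) \longrightarrow 0,
\end{equation*}
with $K_n$ a finite subgroup of the $p$-part of $\prod_{w \in T_{H_n}} k(w)^\times$. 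All terms being finite $p$-groups, the Mittag-Leffler condition is satisfied and $\varprojlim^1$ vanishes, so passing to the inverse limit along norm maps will produce
\begin{equation*}
0 \longrightarrow Y \longrightarrow X_S^T \longrightarrow X_S \longrightarrow 0,
\end{equation*}
where $Y := \varprojlim_n K_n$ and $X_S := \varprojlim_n A_S(H_n)$.

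The next task is to show that both $X_S$ and $Y$ are finitely generated torsion $\Lambda$-modules. The module $X_S$ is a quotient of $X$, since $A(H_n) \twoheadrightarrow A_S(H_n)$ compatibly with the norm maps and inverse limits of surjective maps of finite groups remain surjective; Iwasawa's classical theorem applied to $X$ then gives the conclusion for $X_S$. For $Y$, one observes that it is a subquotient of $\varprojlim_n\bigl(\prod_{w \in T_{H_n}} k(w)^\times\bigr)_p$; by Assumption \ref{assumption-Iwasawa}, no place of $T$ lies above $p$, so $H_\infty/H$ is unramified at each $v \in T$, and the local decomposition subgroup $\Gamma_v \subseteq \Gamma$ is either trivial or procyclic isomorphic to $\mathbb{Z}_p$. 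In either case a standard local Iwasawa-theoretic computation shows that the corresponding local inverse limit is finitely generated over $\Lambda$ and annihilated by the local Euler factor at $v$, hence in particular $\Lambda$-torsion.

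The conclusion will then follow at once from the short exact sequence, as $X_S^T$ is sandwiched between two finitely generated torsion $\Lambda$-modules. The corresponding statement over $\mathbb{Z}_p[[\mathcal{G}]]$ is automatic because $\mathbb{Z}_p[[\mathcal{G}]]$ is finitely generated as a $\Lambda$-module. The most delicate point in the proof is the local computation in the previous paragraph; while routine, it requires careful bookkeeping of the splitting behaviour of primes in $T$ along the cyclotomic tower, especially in the case where $\Gamma_v$ is infinite, since the norm maps between residue fields $k(w_{n+1})^\times \to k(w_n)^\times$ must be analysed explicitly.
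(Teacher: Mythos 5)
Your argument is correct, and it reconstructs what is essentially the standard d\'evissage: the paper itself gives no argument here, merely citing Proposition 4.5 of \cite{gampheera-popescu-RW}, so you have supplied a self-contained proof of exactly the expected form. Two small points of precision. First, the $p$-part $K_n$ of the cokernel of $\mathcal{O}_{H_n,S}^\times\to\prod_{w\in T_{H_n}}k(w)^\times$ is a \emph{quotient}, not a subgroup, of the $p$-part of $\prod_w k(w)^\times$; you correct this implicitly later by calling $Y$ a subquotient, and that is what matters, but the earlier phrasing should be fixed. Second, for a finite prime $v\nmid p$ in the cyclotomic $\mathbb{Z}_p$-extension the decomposition group $\Gamma_v\subseteq\Gamma$ is \emph{always} open, hence isomorphic to $\mathbb{Z}_p$, and in particular never trivial (no finite prime splits completely in the cyclotomic tower); so the case distinction ``trivial or procyclic'' is unnecessary. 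In the relevant case $\Gamma_v\cong\mathbb{Z}_p$, your local computation does go through: for $n\gg 0$ the primes of $T$ are inert up the tower, the norm maps on $\mu_{p^\infty}(k(w_n))$ are surjections $\mathbb{Z}/p^{a_n+1}\twoheadrightarrow\mathbb{Z}/p^{a_n}$ (a lifting-the-exponent computation, using $p$ odd), and the resulting inverse limit is $\mathrm{Ind}_{\mathcal{G}_v}^{\mathcal{G}}\mathbb{Z}_p(1)$, a finite-rank $\mathbb{Z}_p$-module and hence a finitely generated torsion $\Lambda$-module. Combined with $X_S$ being a quotient of $X$ and Iwasawa's classical theorem, the short exact sequence $0\to Y\to X_S^T\to X_S\to 0$ gives the result, and passing to $\mathbb{Z}_p[[\mathcal{G}]]$ is immediate since that ring is finite free over $\Lambda$.
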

\begin{proof}
  In the proof of Proposition 4.5 of \cite{gampheera-popescu-RW}, we prove this result.
\end{proof}

Now let us recall Wiles work on the Iwasawa main conjecture for totally real fields. Wiles' main theorem (Theorem 1.2 of \cite{Wiles}) can be restated as follows.

\begin{theorem}\label{Wiles}
    Let $\mathbb{Q}_p(\mathbb{Z}_p[[\mathcal{G}]]^-)=\mathbb{Z}_p[[\mathcal{G}]]^-\otimes_{\mathbb{Z}_p}\mathbb{Q}_p$. Suppose that $\zeta_p \in H$ where $\zeta_p$ is a primitive $p-$th root of unity and $F_{\infty}\cap H=F$. Moreover assume that $T\cap S_{ram}(H_{\infty}/F)=\emptyset$. Then, we have the following equality of ideals of $\mathbb{Q}_p(\mathbb{Z}_p[[\mathcal{G}]]^-)$.
    $$Fitt_{\mathbb{Q}_p(\mathbb{Z}_p[[\mathcal{G}]]^-)}(X^{T,-}\otimes_{\mathbb{Z}_p}\mathbb{Q}_p)=(\Theta_{S_p\cup S_{\infty}}^T(H_{\infty}))$$
\end{theorem}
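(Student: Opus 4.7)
The plan is to deduce this statement directly from Theorem 1.2 of \cite{Wiles} via character decomposition and matching of Euler factors. Under the simplifying assumptions $\zeta_p\in H$ and $F_\infty\cap H=F$, the Galois group splits as $\mathcal{G}=\Gamma\times\Delta$ with $\Delta=G=Gal(H/F)$ finite. Accordingly, after extending scalars to a sufficiently large finite extension $K/\mathbb{Q}_p$ containing all values of odd characters of $\Delta$, we obtain a decomposition
$$\mathbb{Q}_p(\mathbb{Z}_p[[\mathcal{G}]]^-)\otimes_{\mathbb{Q}_p}K \cong \prod_{\chi\text{ odd}} K[[\Gamma]],$$
and each factor $K[[\Gamma]]\cong K[[T]]$ is a discrete valuation ring. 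It therefore suffices to verify the equality of ideals on each odd $\chi$-component, where moreover the zeroth Fitting ideal of a finitely generated torsion module coincides with its characteristic ideal.

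The next step is to compare, for each odd $\chi$, the $\chi$-component of $X^{T,-}\otimes_{\mathbb{Z}_p}\mathbb{Q}_p$ with that of the classical unramified Iwasawa module $X^-$. The defining short exact sequence of the $T$-ray class group yields, after passage to the inverse limit over the Iwasawa tower and projection to the minus part, an exact sequence
$$0\longrightarrow K_T \longrightarrow X^{T,-} \longrightarrow X^- \longrightarrow 0,$$
whose kernel $K_T$ is controlled by the projective limit of the local units at primes in $T$ (modulo global units, whose contribution becomes negligible on the minus part after inverting $p$). Under the hypothesis $T\cap S_{ram}(H_\infty/F)=\emptyset$, the primes in $T$ are unramified in $H_\infty/F$, so $I_v=1$ for $v\in T$, and the characteristic ideal of $K_T$ on each $\chi$-component is precisely the Euler factor $\prod_{v\in T}(1-\chi(\phi_v)^{-1}Nv)$, interpreted as an element of $K[[\Gamma]]$ via the cyclotomic character. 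This matches exactly the $T$-smoothing factor in the definition of $\Theta_S^T$.

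Wiles' theorem asserts that for each odd character $\chi$ of $\Delta$, the characteristic ideal of $X^-(\chi)$ over $\mathbb{Z}_p(\chi)[[\Gamma]]$ is generated by the Deligne--Ribet $p$-adic $L$-function attached to $\chi$, in which Euler factors at archimedean places and at primes of $F$ above $p$ are absent. This coincides with the $\chi$-component of the equivariant $p$-adic $L$-function $\Theta_{S_p\cup S_\infty}(H_\infty)$. Combining this identification with the $T$-smoothing comparison above yields the equality of $\chi$-components of the two sides of our claim, and assembling across all odd characters $\chi$ gives the stated equality in $\mathbb{Q}_p(\mathbb{Z}_p[[\mathcal{G}]]^-)$.

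The main obstacle is bookkeeping rather than conceptual: one must carefully match the normalizations and Euler-factor conventions between Wiles' formulation of the $p$-adic $L$-function and the equivariant $L$-function $\Theta_{S_p\cup S_\infty}^T(H_\infty)$, and verify that the inverse-limit procedure on the algebraic side correctly produces the $T$-smoothing factor after minus-part projection. The assumption $\zeta_p\in H$ ensures that all odd characters of $\Delta$ are non-exceptional for Wiles' theorem, while $F_\infty\cap H=F$ secures the clean decomposition of $\mathcal{G}$ used throughout.
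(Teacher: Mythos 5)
Your overall strategy is essentially the same as the paper's: compare $X^{T,-}$ to the classical module $X^-$ through the $T$-smoothing exact sequence, reduce Wiles' main conjecture (which the simplifying hypotheses $\zeta_p\in H$ and $F_\infty\cap H=F$ make applicable character-by-character) to a $T$-Euler-factor bookkeeping problem, and assemble. The character decomposition you invoke is implicitly what the paper does via multiplicativity of Fitting ideals over the semisimple ring $\mathbb{Q}_p(\mathbb{Z}_p[[\mathcal{G}]]^-)$.

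However, there is a genuine gap in the claim that the global-units contribution ``becomes negligible on the minus part after inverting $p$.'' The minus part of $\mathcal{O}_{H_n}^\times\otimes\mathbb{Z}_p$ is $\mu(H_n)_p$, the $p$-power roots of unity (these lie entirely in the minus eigenspace, as complex conjugation inverts them), and under $\zeta_p\in H$ the projective limit $\varprojlim_n\mu(H_n)_p$ along norm maps is $\mathbb{Z}_p(1)$. After $\otimes\,\mathbb{Q}_p$ this is $\mathbb{Q}_p(1)\neq 0$, not negligible. Consequently the exact sequence you quote is not short: the correct object (and what the paper works with) is the four-term sequence
\[
0\longrightarrow \mathbb{Q}_p(1)\longrightarrow \bigoplus_{v\in T}\mathbb{Q}_p(\mathbb{Z}_p[[\mathcal{G}]]^-)/(1-\sigma_{v,\infty}^{-1}\mathbf{N}v)\longrightarrow X^{T,-}\otimes\mathbb{Q}_p\longrightarrow X^-\otimes\mathbb{Q}_p\longrightarrow 0.
\]
This $\mathbb{Q}_p(1)$ is not a harmless nuisance: precisely at the component corresponding to the Teichm\"uller character $\omega$ (which exists on $G$ because $\zeta_p\in H$), $\mathrm{Fitt}(\mathbb{Q}_p(1))$ is a nontrivial proper ideal of $K[[\Gamma]]$. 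Ignoring it makes your claimed equality of $\chi$-components wrong at $\chi=\omega$.

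The paper resolves this by noting that the same $\mathbb{Q}_p(1)$ factor also appears on the analytic side: via the Greither--Popescu lemmas, Wiles' theorem translates into $(\Theta^{\emptyset}_{S_p\cup S_\infty}(H_\infty))\,\mathrm{Fitt}(\mathbb{Q}_p(1)) = \mathrm{Fitt}(X^-\otimes\mathbb{Q}_p)$, and the $\mathrm{Fitt}(\mathbb{Q}_p(1))$ terms on the two sides of the four-term-sequence Fitting ideal identity cancel. Your sketch, by asserting directly that Wiles gives $\mathrm{Fitt}(X^-_\chi)=L_p(\chi)$ for all odd $\chi$, silently absorbs the $\omega$-discrepancy into an unstated normalization convention. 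To make the argument rigorous you need to either (a) keep the $\mathbb{Q}_p(1)$ term explicitly and cancel it as the paper does, or (b) be careful that the version of Wiles' theorem you cite already includes the $\mathbb{Q}_p(1)$ correction at the $\omega$-component, and verify that this is compatible with your suppression of that term on the algebraic side.
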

\begin{proof}
     By tensoring the exact sequence (10) of \cite{Greither-Popescu} at the level of $H_n$ with $\mathbb{Z}_p$ and by taking the minus part we have the following.

    $$0\xrightarrow[]{} \mu(H_n)_p \xrightarrow[]{} \big(\bigoplus_{w\in T_{H_n}}\kappa(w)^{\times}\otimes \mathbb{Z}_p \big)^- \xrightarrow[]{} A^{T,-}(H_n)\xrightarrow[]{} A^-(H_n)\xrightarrow[]{} 0$$
    Here $\kappa(w)$ is the residue field associated to the prime $w$. Now, observe that we have the isomorphisms of $\mathbb{Z}[G_n]-$modules, 
    $$\bigoplus_{w\in T_{H_n}}\kappa(w)^{\times} \cong \bigoplus_{v\in T}\kappa(w(v))^{\times} \otimes _{\mathbb{Z}[G_{n,v}]} \mathbb{Z}[G_{n}]$$
    Here $\kappa(w(v))$ is a prime in $H_n$ that is above $v$ and $G_{n,v}$ is the decomposition group of $v$ in $H_n/F$. It is easy to see that as a $\mathbb{Z}[G_{n,v}]-$module, we have
    $$\kappa(w(v))^{\times}\cong \mathbb{Z}[G_{n,v}]/(1-\sigma_{v,n}^{-1}\cdot\textbf{N}v)$$
    where $\textbf{N}v$ is the norm of the prime $v$ and $\sigma_{v,n}$ is the Frobenius of $v$ in $G_{n,v}$. Therefore, we have,
    $$\big(\bigoplus_{w\in T_{H_n}}\kappa(w)^{\times}\otimes \mathbb{Z}_p \big)^- \cong \bigoplus_{v\in T}\mathbb{Z}_p[G_{n}]^-/(1-\sigma_{v,n}^{-1}\cdot\textbf{N}v)$$
    Now, by taking the projective limit of the above exact sequence on the Iwasawa tower with respect to the norm maps between levels, and then by tensoring by $\mathbb{Q}_p$, we have the exact sequence of $\mathbb{Q}_p(\mathbb{Z}_p[[G]]^-)-$modules,
    $$0\xrightarrow[]{} \mathbb{Q}_p(1)\xrightarrow[]{} \bigoplus_{v\in T}\mathbb{Q}_p(\mathbb{Z}_p[[\mathcal{G}]]^-)/(1-\sigma_{v,\infty}^{-1}\cdot\textbf{N}v) \xrightarrow[]{} X^{T,-}\otimes_{\mathbb{Z}_p}\mathbb{Q}_p\xrightarrow[]{} X^- \otimes_{\mathbb{Z}_p}\mathbb{Q}_p\xrightarrow[]{} 0$$
    where $\sigma_{v,\infty}\in \mathcal{G}$ is the Frobenius of $v$ in $H_{\infty}/F$. Here $\mathbb{Q}_p(1)$ is the additive group $\mathbb{Q}_p$ where $\mathcal{G}$ acts via the $p-$cyclotomic character $c_p:\mathcal{G}\xrightarrow[]{} Aut(\mu(H_{\infty})_p)=\mathbb{Z}_p^{\times}$. Now, clearly we have,
    $$Fitt_{\mathbb{Q}_p(\mathbb{Z}_p[[\mathcal{G}]]^-) }\big(\bigoplus_{v\in T}\mathbb{Q}_p(\mathbb{Z}_p[[\mathcal{G}]]^-)/(1-\sigma_{v,\infty}^{-1}\cdot\textbf{N}v)\big)=\prod_{v\in T}(1-\sigma_{v,\infty}^{-1}\cdot\textbf{N}v)$$
    Now, by Lemma 5.21(2), Lemma 5.13(2) of \cite{Greither-Popescu} and Theorem 1.2 of \cite{Wiles} we have,
    $$(\Theta_{S_p\cup S_{\infty}}^{\emptyset}(H_{\infty}))Fitt_{\mathbb{Q}_p(\mathbb{Z}_p[[\mathcal{G}]]^-) }\mathbb{Q}(1)=Fitt_{\mathbb{Q}_p(\mathbb{Z}_p[[\mathcal{G}]]^-) }(X^-\otimes_{\mathbb{Z}_p}\mathbb{Q}_p).$$
    Since Fitting ideals over $\mathbb{Q}_p(\mathbb{Z}_p[[G]]^-)$ are multiplicative, this together with the fact that,
    $$\Theta_{S_p\cup S_{\infty}}^{\emptyset}(H_{\infty}) \prod_{v\in T}(1-\sigma_{v,\infty}^{-1}\cdot\textbf{N}v)= \Theta_{S_p\cup S_{\infty}}^{T}(H_{\infty})$$
    completes the proof.
\end{proof}
\begin{remark}\label{not-equivariant}
    Notice that when $F_{\infty}\cap H=F$, we have the ring isomorphism $$\mathbb{Q}_p(\mathbb{Z}_p[[\mathcal{G}]])\cong \bigoplus_{[\chi]\in \Hat{G}} \mathbb{Q}_p(\chi)[[\Gamma]].$$ 
    The summation runs through equivalence classes of $\overline{\mathbb{Q}}_p^{\times}$  characters of $G$. Here, $\mathbb{Q}_p(\chi)$ is the extension obtained by adjoining $\mathbb{Q}_p$ with all the values of the character $\chi$. Therefore, despite the appearances, the previous theorem, which is essentially a reformulation of Theorem 1.2 of \cite{Wiles}, is neither integral (since the module is tensored by $\mathbb{Q}_p$) nor equivariant (it is rather a statement for an odd Artin charater of $F$) in nature. Therefore, as the main goal of this paper we are proving an integral, equivariant refinement of the theorem above. Namely, we calculate $Fitt_{\mathbb{Z}_p[[\mathcal{G}]^-}(X^{T,-})$ in Corollary \ref{main corollary}. In fact we prove a stronger result by calculating $Fitt_{\mathbb{Z}_p[[\mathcal{G}]^-}(X_S^{T,-})$ in Theorem \ref{main theorem}.
\end{remark}

\begin{proposition} \label{LHS}
    For all $v\in S_f$, let $\sigma_{v,\infty}\in \mathcal{G_v}$ be a choice of Frobenius with infinite order. Define, $\sigma_{v,n}:=\pi_n(\sigma_{v,\infty})\in G_{v,n}$ which is also a choice of Frobenius in $G_n$. Here, $\pi_n$ is the Galois restriction. Under the Assumption \ref{assumption-Iwasawa} we have the following equality of ideals in $\mathbb{Z}_p[[\mathcal{G}]]^-$.
    $$\varprojlim_n (Fitt_{\mathbb{Z}[G_n]^-}(A_S^{T,-}(H_n)) \prod_{v\in S_f} (\sigma_{v,n}-1))=Fitt_{\mathbb{Z}_p[[\mathcal{G}]]^-} (X_S^{T,-})\prod_{v\in S_f} (\sigma_{v,\infty}-1)$$
\end{proposition}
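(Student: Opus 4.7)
The plan is to apply the algebraic framework of Section 2 --- specifically Lemma \ref{Proj-ideal-lemma} --- to the two sequences of fractional ideals of $R_n := \mathbb{Z}_p[G_n]^-$ given by $I_n := \prod_{v\in S_f}(\sigma_{v,n}-1)$ and $J_n := Fitt_{R_n}(A_S^{T,-}(H_n))$, and then identify each projective limit separately.

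First, I would verify the hypotheses of Lemma \ref{Proj-ideal-lemma}. The rings $R_n$ are compact and Noetherian with surjective transition maps; the ideals $I_n$ and $J_n$ are integral, hence trivially have bounded denominators. The ideal $I_n$ is principal, generated by $\prod_{v\in S_f}(\sigma_{v,n}-1)$, and the compatibility $\pi_n^{n+1}(\sigma_{v,n+1}) = \sigma_{v,n}$ (built into the hypothesis of the proposition) ensures the generator lifts coherently up the tower. The containment $\pi_n^{n+1}(J_{n+1}) \subseteq J_n$ follows from the base-change behaviour of Fitting ideals combined with the classical surjectivity of the norm maps $A_S^{T,-}(H_{n+1}) \twoheadrightarrow A_S^{T,-}(H_n)$: one has $\pi_n^{n+1}(Fitt_{R_{n+1}}(A_S^{T,-}(H_{n+1}))) = Fitt_{R_n}(A_S^{T,-}(H_{n+1}) \otimes_{R_{n+1}} R_n)$, and the quotient $A_S^{T,-}(H_n)$ of the latter has no smaller Fitting ideal. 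Lemma \ref{Proj-ideal-lemma} therefore yields
$$\varprojlim_n (I_n J_n) = \Bigl(\varprojlim_n I_n\Bigr)\Bigl(\varprojlim_n J_n\Bigr).$$

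The first limit is immediate: $\varprojlim_n I_n = \prod_{v\in S_f}(\sigma_{v,\infty}-1)$, since the single generator of each $I_n$ is by construction the image of $\prod_{v\in S_f}(\sigma_{v,\infty}-1)$ under the natural projection. The identification $\varprojlim_n J_n = Fitt_{\mathbb{Z}_p[[\mathcal{G}]]^-}(X_S^{T,-})$ is the more delicate step. Here I would use that, by Proposition \ref{fgTorsion}, $X_S^{T,-}$ is finitely generated over $\mathbb{Z}_p[[\mathcal{G}]]^-$, so the ideal $Fitt_{\mathbb{Z}_p[[\mathcal{G}]]^-}(X_S^{T,-})$ is finitely generated, say by $y_1,\dots,y_m$. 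The Iwasawa-theoretic control statement for $X_S^{T,-}$ --- namely, the canonical identification $X_S^{T,-}\otimes_{\mathbb{Z}_p[[\mathcal{G}]]^-}\mathbb{Z}_p[G_n]^- \cong A_S^{T,-}(H_n)$ --- combined with base-change of Fitting ideals gives $\pi_n(Fitt_{\mathbb{Z}_p[[\mathcal{G}]]^-}(X_S^{T,-})) = J_n$ as an equality. In particular, $J_n$ is generated by the compatible system $(\pi_n(y_1),\dots,\pi_n(y_m))$. The inclusion $Fitt(X_S^{T,-}) \subseteq \varprojlim_n J_n$ is then automatic, and the reverse inclusion follows from the same compactness plus diagonal-subsequence argument that drives Lemmas \ref{proj-ideal- lemma-1} and \ref{Proj-ideal-lemma}: given $z = (z_n) \in \varprojlim_n J_n$, one writes $z_n = \sum_i \pi_n(y_i)a_i^{(n)}$ and extracts convergent subsequences from each of the $k$ coordinates in the compact rings $R_n$ to produce $a_i \in \mathbb{Z}_p[[\mathcal{G}]]^-$ with $z = \sum_i y_i a_i$.

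The main obstacle will be justifying the control-theoretic identification $A_S^{T,-}(H_n) \cong X_S^{T,-}\otimes_{\mathbb{Z}_p[[\mathcal{G}]]^-}\mathbb{Z}_p[G_n]^-$, which is precisely what upgrades the containment $\pi_n(Fitt(X_S^{T,-})) \subseteq J_n$ to an equality; without this equality, one only gets $\varprojlim_n J_n \supseteq Fitt(X_S^{T,-})$, which is not enough. This is where the hypotheses of Assumption \ref{assumption-Iwasawa} --- in particular $T\neq \emptyset$ and $T\cap S_p = \emptyset$, which rule out capitulation of $T$-units and pathological $p$-torsion in the tower --- should enter essentially, allowing the surjective norms in the tower to be shown injective on the relevant coinvariants.
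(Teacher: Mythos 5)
Your overall skeleton --- apply Lemma \ref{Proj-ideal-lemma} with $I_n = \prod_{v\in S_f}(\sigma_{v,n}-1)$ and $J_n = Fitt_{\mathbb{Z}_p[G_n]^-}(A_S^{T,-}(H_n))$ and identify each limit separately --- is exactly the paper's, and the identification of $\varprojlim_n I_n$ is correct. The gap is in the second limit. You predicate the containment $\varprojlim_n J_n \subseteq Fitt_{\mathbb{Z}_p[[\mathcal{G}]]^-}(X_S^{T,-})$ on a control-type isomorphism $A_S^{T,-}(H_n)\cong X_S^{T,-}\otimes_{\mathbb{Z}_p[[\mathcal{G}]]^-}\mathbb{Z}_p[G_n]^-$, which would upgrade $\pi_n\bigl(Fitt(X_S^{T,-})\bigr)\subseteq J_n$ to an equality. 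This identification is neither proved nor available: the tensor product is the module of $\Gamma_n$-coinvariants, and the natural map from $(X_S^{T,-})_{\Gamma_n}$ to $A_S^{T,-}(H_n)$ is surjective (once the norms are known surjective) but in general has a nontrivial kernel coming from ramification and genus-theoretic phenomena in the tower. Assumption \ref{assumption-Iwasawa} controls $p$-integrality of Stickelberger elements; it is not a control theorem for $(S,T)$-ray class groups. You flag this yourself as the main obstacle, and the suggested fix (properties of $T$) does not close it.

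The paper avoids the issue entirely by invoking Corollary 2.3 of \cite{gampheera-popescu-RW}, a Greither--Kurihara-type result (in the spirit of \cite{Greither-Kurihara}): for a projective system of finite $\mathbb{Z}_p[G_n]^-$-modules $(M_n)_n$ with \emph{surjective} transition maps whose limit $\varprojlim_n M_n$ is finitely generated and torsion over $\Lambda$, one has $\varprojlim_n Fitt_{\mathbb{Z}_p[G_n]^-}(M_n)=Fitt_{\mathbb{Z}_p[[\mathcal{G}]]^-}(\varprojlim_n M_n)$. This uses only the structure theory of finitely generated torsion $\Lambda$-modules, not a control isomorphism; Proposition \ref{fgTorsion} supplies exactly the needed finiteness. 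You should replace your control-theorem step by an appeal to that corollary. A secondary point: the surjectivity of the norm maps $A_S^{T,-}(H_{n+1})\twoheadrightarrow A_S^{T,-}(H_n)$, which you call classical, is not automatic for the $T$-modified, $S$-decorated groups and the paper has to prove it for $n\gg 0$ using total ramification of $p$-primes in $H_{n+1}/H_n$, inertness of $T$-primes (this is where $T\cap S_p=\emptyset$ enters), the exact sequence relating $Cl^T$ to $Cl$, and the further quotient to $Cl_S^T$; that argument is also precisely what verifies the hypothesis $\pi_n^{n+1}(J_{n+1})\subseteq J_n$ of Lemma \ref{Proj-ideal-lemma}, so it cannot be omitted.
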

\begin{proof}
    Notice that, for any finite prime $v$, the decomposition group $\mathcal{G}_v$ in $H_{\infty}/F$ has $\mathbb{Z}_p$ rank 1. Hence, we can pick a Frobenius $\sigma_{v,\infty}$ with infinite order. First, let us prove that the left hand side is well defined. That is, we have well defined transition maps between the ideals for large enough $n$.\\\\
    We know that when $n\gg 0$, $H_{n+1}/H_n$ is totally ramified at each prime over $p$ and all the $T-$primes are inert. Observe that we have the following commutative diagram.
    
\[
\begin{tikzcd}
    \bigoplus_{w\in T_{H_{n+1}}} \kappa(w)^{\times} \arrow{d}\arrow{r} & Cl^T(H_{n+1}) \arrow{d}\arrow{r} & Cl(H_{n+1}) \arrow{d}\arrow{r} & 0 \\%
    \bigoplus_{v\in T_{H_{n}}} \kappa(v)^{\times} \arrow{r} & Cl^T(H_{n}) \arrow{r} & Cl(H_{n}) \arrow{r} & 0
\end{tikzcd}
\]   
Here $\kappa(w)$ and $\kappa(v)$ are residue fields associated to the primes $w$ and $v$ respectively. The rows are obtained from exact sequence (10) in \cite{Greither-Popescu}. The vertical arrows are induced by the norm map. Now, since, the $T-$primes are inert in $H_{n+1}/H_n$, we have $Gal(\kappa(w)/\kappa(v))=Gal(H_{n+1}/H_n)$. Therefore, the left vertical map is surjective. Since the primes above $p$ are totally ramified in $H_{n+1}/H_n$, a standard argument in class field theory implies that the right vertical map is also surjecive. Therefore, so is the middle vertical map. \\\\
Now, observe that we also have the following natural commutative diagram whose vertical arrows are induced by norm maps and the horizontal maps are surjective.

\[
\begin{tikzcd}
    Cl^T(H_{n+1}) \arrow{d}\arrow{r} & Cl_S^T(H_{n+1}) \arrow{d} \\%
     Cl^T(H_{n}) \arrow{r} & Cl_S^T(H_{n}) 
\end{tikzcd}
\]
Now, since we proved that the left vertical arrows are surjective, so is the right vertical arrow. Now, by tensoring by $\mathbb{Z}_p$ and taking the minus part, we have that the norm map $A_S^{T,-}(H_{n+1})\xrightarrow[]{} A_S^{T,-}(H_{n})$ is surjective. Therefore, by the properties of Fitting ideals, we have $$\pi_n^{n+1}(Fitt_{\mathbb{Z}_p[G_{n+1}]^-}A_S^{T,-}(H_{n+1}))\subseteq Fitt_{\mathbb{Z}_p[G_{n}]^-}A_S^{T,-}(H_{n}).$$ On the other hand, for each $v\in S_f$ we have $$\pi_n^{n+1}((\sigma_{n+1,v}-1))=(\sigma_{n,v}-1).$$ Therefore, Galois restriction induces well defined transition maps between the modules in the left hand side for $n\gg 0$.\\\\
Hence, by Lemma \ref{Proj-ideal-lemma} we have,
$$\varprojlim_n (Fitt_{\mathbb{Z}[G_n]^-}(A_S^{T,-}(H_n)) \prod_{v\in S_f} (\sigma_{v,n}-1))=\varprojlim_n (Fitt_{\mathbb{Z}[G_n]^-}(A_S^{T,-}(H_n))) \prod_{v\in S_f} (\sigma_{v,\infty}-1).$$
Now, by Proposition \ref{fgTorsion}, we have $X_S^{T,-}$ is finitely generated and torsion as a $\Lambda-$module. This together with the surjectivity of the norm maps and Corollary 2.3 in \cite{gampheera-popescu-RW} implies the following.
$$\varprojlim_n (Fitt_{\mathbb{Z}[G_n]^-}(A_S^{T,-}(H_n))=Fitt_{\mathbb{Z}[[\mathcal{G}]]^-}(X_S^{T,-})$$
This completes the proof.
\end{proof}

For each $n$, let $G_{p,n}$ be the Sylow $p-$subgroup of $G_n$. Then, $\mathcal{G}\cong G'\times G_{p,\infty}$ where $G_{p,\infty}=\varprojlim_n G_{p,n}$ and $G'$ is the non$-p$ part of $G$. Let $G_p$ be the torsion part of $G_{p,\infty}$. Then, $\mathcal{G}\cong G'\times G_p\times \Gamma'$, where $\Gamma'$ is a subgroup of $\mathcal{G}$ that is isomorphic to $\mathbb{Z}_p$. Hence, there is also a subfield $H'$ of $H_{\infty}$, namely the fixed field of $\Gamma'$, whose cyclotomic $\mathbb{Z}_p-$extension is $H_{\infty}$, such that $Gal(H_{\infty}/H')=\Gamma'$. Check the proof of Theorem 4.2(2) in \cite{Greither-Popescu} for details.\\

Now, observe that if $H'_n$ is the $n$th layer of the cyclotomic $\mathbb{Z}_p-$extension of $H'$, we have $X_S^T=\varprojlim_n A_S^T(H'_n)$. In other words, $X_S^T$ does not depend on the base field. So, for the purpose of computing, $Fitt_{\mathbb[[\mathcal{G}]]^-}(X_S^{T,-})$, Without loss of generality we can assume that $H'=H$. In other words, we may assume that $\mathcal{G}\cong G\times \Gamma$.
\\\\
Let $v$ be a prime above $p$ and let $\mathcal{I}_v$ be its inertia group in the extension $H_{\infty}/F$. Now, since $rank_{\mathbb{Z}_p}(\mathcal{I}_{v})=1$, we have $\mathcal{I}_{v}=Tor(\mathcal{I}_{v})\times \overbar{\langle y \rangle}$ where $y=(g,\gamma^t)\in G\times \Gamma$. Here $\overbar{\langle y \rangle}$ is the closure of the subgroup of $\mathcal{G}$ generated by $y$ and $\gamma$ is a topological generator of $\Gamma$. Moreover we are viewing $\Gamma$ as a multiplicative group. Observe that we can choose $\gamma$ and $y$ such that $t$ is a power of $p$ and $g$ has a $p-$power order. Now, we also choose a large $n$ to make sure that $\gamma^{p^n}\in \langle y \rangle$.\\\\
Now, we know that, if $\pi_n$ is the Galois restriction,
$$\pi_n:\mathcal{G}\cong G\times \Gamma\xrightarrow[]{} G_n\cong G\times (\Gamma/\Gamma^{p^n})$$
where under relevant identifications $\pi_n(G)=G$ and $\pi_n(\Gamma)=\Gamma/\Gamma^{p^n}$. Therefore, if $I_{n,v}$ is the inertia group of $v$ in the extension $H_n/F$, we have $$I_{n,v}=\pi_n(\mathcal{I}_{v})\cong \pi_n(Tor(\mathcal{I}_{v}))\times (\overbar{\langle y \rangle}/\Gamma^{p^n}).$$ Observe that, here $\pi_n(Tor(\mathcal{I}_{v}))\subseteq G$ is independent of $n$. Now, choose generators of $\pi_n(Tor(\mathcal{I}_{v}))$ such that,
$$\pi_n(Tor(\mathcal{I}_{v}))=\langle g_1\rangle\times \langle g_2\rangle\times\text{...  }\langle g_{s-1}\rangle$$
and let $g_s^{(n)}=\pi_n(y)$. Then we have $$I_{n,v}=\langle g_1\rangle\times \langle g_2\rangle\times\text{...  }\langle g_s^{(n)}\rangle $$
Hence, under this identification, for $n\gg 0$, $\pi_n^{n+1}(g_i)=g_i$ and $\pi_n^{n+1}(g_s^{(n+1)})=g_s^{(n)}$. Let us define $I_r^{(v)}:= \langle g_r\rangle \subseteq \mathcal{G}$ for each $1 \leq r \leq s-1$.\\
\begin{definition}\label{J-definition-p}
   Let $v$ be a prime of $F$ that is above $p$ and let $\mathcal{G}_v$ be the decomposition group of $v$ in the extension $H_{\infty}/H$. Define the ideals in $\mathbb{Z}_p[[\mathcal{G}]]$, 
    $$\Tilde{Z}_{i,v}^{\infty}:=\left(\prod_{j=1}^{s-i} N(I_{l_j}^{(v)}); 1\leq l_1 < l_2 < \text{ ... } , < l_{s-i}\leq s-1\right)$$
     $$\Delta {\mathcal{G}_v}:=Ker(\mathbb{Z}_p[[\mathcal{G}]]\xrightarrow[]{} \mathbb{Z}_p[[\mathcal{G}/\mathcal{G}_v]])$$
$$\Tilde{J}_v^{\infty}:=\sum_{i=1}^s \Tilde{Z}_{i,v}^{\infty} \Delta {\mathcal{G}_v} $$
\end{definition}
Moreover, if $v$ is a prime of $F$ away from $p$, it does not ramify in the extension $H_{\infty}/H$. Hence, under the identification of $\mathcal{G}$ with $G\times \Gamma$, inertia group of $v$ in the extensions $H_{\infty}/F$ and $H_n/F$ for each $n$, is the same subgroup (say $I_v$) of $G$. Henceforth, for such $v$, we denote its cyclic decomposition as,
$$I_v=I_1^{(v)}\times I_2^{(v)}\times..... I_s^{(v)}.$$
\begin{definition}
If $v$ of $F$ that is away from $p$, we define,
$$Z_{i,v}^{\infty}:=\left(\prod_{j=1}^{s-i} N(I_{l_j}); 1\leq l_1 < l_2 < \text{ ... } , < l_{s-i}\leq s\right)$$
$$J_v^{\infty}:=\sum_{i=1}^s Z_{i,v}^{\infty} \Delta{\mathcal{G}_v}^{i-1}.$$
Here also $\mathcal{G}_v$ is the decomposition group of $v$ in $H_{\infty}/F$ and the augmentation ideal $\Delta \mathcal{G}_v$ is defined similar to Definition \ref{J-definition-p}.  
\end{definition}
\begin{definition}
    Let $v$ be a prime of $F$ that is not above $p$. The fractional ideal $Q_v$ of $\mathbb{Z}_p[[\mathcal{G}]]^-$ is defiend as,
    $$Q_v:=\big(N(I_v),( 1-e_{\mathcal{I}_v}\sigma_{v,\infty}^{-1})J_v^{\infty}\big)$$
\end{definition}
When $v$ is away from $p$, its inertia group in $H_{\infty}/F$ is finite. Therefore, the above definition makes sense.
\begin{definition}
    If $v\in S\cap S_{ram}(H_{\infty}/F)$, then define the (possibly fractional) ideal of $\mathbb{Z}_p[[\mathcal{G}]]^-$ as,
    $$R_v:=\left(N(A)\Delta B^{r_B-2}\,\bigg\mid\, A, B\text{ subgroups of } \mathcal{G}_v,\text{ with }\mathcal G_v=A\times B \text{ and }  A \text{ finite }\right)$$
Here $A, B$ runs through all the possibilities such that $\mathcal{G}_v=A\times B$ where $A$ is finite. $\Delta B=(g-1\text{ ; }g\in B)$ is the augmentation ideal of $B$ and $r_B$ is the minimum number of generators of $B$.
\end{definition}

\begin{proposition}\label{RHS}
Suppose $S,T$ satisfies Assumption \ref{assumption-Iwasawa} and $S'$ be a set of primes in $F$ disjoint from $S\cup T$ which satisfies $S_{ram}(H_{\infty}/F)\subseteq T\cup S\cup S'$. For each prime $v$ of $F$ fix a Frobenius of $v$, $\sigma_{v,\infty}\in \mathcal{G}$ in the extension $H_{\infty}/F$ and let $\sigma_{v,n}=\pi_n(\sigma_{v,\infty})$. For each $n$ and the set $S_f$, denote by $M(H_n)$ the module $M$ defined in Proposition \ref{equality} with respect to the choices of Frobenius $\sigma_{v,n}$ for each $v\in S_f$. Let $M(H_{\infty})_p=\varprojlim_n M(H_n)_p$ where the transition maps are induced by the Galois restriction. Then, we have the following equality of ideals in $\mathbb{Z}_p[[\mathcal{G}]]^-$.

    \begin{align*}
       \varprojlim_n ((\Theta_{S,S'}^T(H_n))Fitt_{\mathbb{Z}_p[G_n]^-}^{[1]}((Z_{S'}(H_n))_p^-)Fitt_{\mathbb{Z}_p[G_n]^-}(M(H_n)_p^-))\\
       =\Theta_{S\cup S'_{p}}^T(H_{\infty})\prod_{v\in S'\setminus S_p} Q_v \prod_{v\in S'_{p}} \Tilde{J}_v^{\infty}Fitt_{\mathbb{Z}_p[[\mathcal{G}]]^-}(M(H_{\infty})_p^{-}) 
    \end{align*}
Here $S_p'=S'\cap S_{p}$.
\end{proposition}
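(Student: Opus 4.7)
The plan is to rewrite the finite--layer expression using Theorem \ref{A-shifted-Fitt} applied summand--by--summand to $(Z_{S'}(H_n))_p^- = \bigoplus_{v\in S'} A_{v,n}$, and then to take the projective limit down the cyclotomic tower using the three lemmas of Section 2, treating primes above $p$ separately from primes away from $p$. Noting that $\Theta_{S,S'}^T(H_n) = \Theta_S^T(H_n)\prod_{v\in S'} h_{v,n}$ with $h_{v,n} := 1-e_{I_{n,v}}(\phi_{v,n}^{-1}-|I_{n,v}|)$ the very non--zero divisor appearing in Theorem \ref{A-shifted-Fitt}, the $h_{v,n}$ factors cancel and one obtains at each level the identity
\[
\Theta_{S,S'}^T(H_n)\,Fitt_{\mathbb{Z}_p[G_n]^-}^{[1]}((Z_{S'}(H_n))_p^-) \;=\; \Theta_S^T(H_n)\prod_{v\in S'}\bigl(N(I_{n,v}),\,(1-e_{I_{n,v}}\sigma_{v,n}^{-1})\mathcal{J}(I_{n,v})\bigr).
\]
Multiplying by $Fitt(M(H_n)_p^-)$ and expanding the local factors via Lemma \ref{Shifted-Fitt-generators} yields, at each level, a finitely generated fractional ideal whose generators are products indexed by tuples of local data.

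The local analysis splits by the position of $v$. For $v \in S'\setminus S_p$ the inertia group $I_v$ is finite and independent of $n$; every local generator is then restriction--compatible, and the $v$--factor limits to $(N(I_v),(1-e_{\mathcal{I}_v}\sigma_{v,\infty}^{-1})J_v^\infty) = Q_v$. For $v \in S'_p$, using the decomposition $I_{n,v} = \langle g_1\rangle \times \cdots \times \langle g_{s-1}\rangle \times \langle g_s^{(n)}\rangle$ with only the last factor growing with $n$, a direct computation gives $\pi_n^{n+1}(N(\langle g_s^{(n+1)}\rangle)) = p\cdot N(\langle g_s^{(n)}\rangle)$, while $g_s^{(n+1)}-1 \mapsto g_s^{(n)}-1$ and all remaining $N(\langle g_r\rangle),\, g_r-1$ for $r<s$ restrict to themselves. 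Lemma \ref{Shifted-Fitt-generators} then sorts the local generators at such $v$ into two classes: type (B), containing $N(\langle g_s^{(n)}\rangle)$ and hence scaled by $p$ under restriction, and type (A), compatible under restriction; note that $N(I_{n,v})$ itself is of type (B).

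Assembling the pieces, I would invoke Lemma \ref{ignore-generators} to discard, in the projective limit, every generator of the level--$n$ ideal that contains at least one factor $N(\langle g_s^{(n)}\rangle)$ for some $v\in S'_p$; then Lemma \ref{Proj-ideal-lemma} to factor out $\varprojlim_n Fitt(M(H_n)_p^-) = Fitt(M(H_\infty)_p^-)$, the transitions on $M$ being surjective by construction; and finally Lemma \ref{proj-ideal- lemma-1} to distribute the remaining projective limit across the product over $v \in S'$ and across the scalar factor $\Theta_S^T(H_n) \to \Theta_S^T(H_\infty)$. For each $v \in S'_p$ the surviving type (A) generators reassemble into $(1-e_{\mathcal{I}_v}\sigma_{v,\infty}^{-1})\tilde{J}_v^\infty$, and multiplying the collected scalars $\prod_{v\in S'_p}(1-e_{\mathcal{I}_v}\sigma_{v,\infty}^{-1})$ into $\Theta_S^T(H_\infty)$ supplies precisely the Euler factors at the primes above $p$ not in $S$, upgrading it to the standard $p$--adic $L$--function $\Theta_{S\cup S'_p}^T(H_\infty)$. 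The main obstacle is the analysis at $v\in S'_p$: uniformly controlling the bounded denominators to apply Lemma \ref{ignore-generators} along the whole tower, carefully matching the type (A) survivors with the defining generators of $\tilde{J}_v^\infty$, and rigorously justifying the Euler--factor absorption into the $p$--adic $L$--function.
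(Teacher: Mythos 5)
Your plan follows the paper's proof essentially step for step: apply Theorem \ref{A-shifted-Fitt} summand by summand after cancelling the $h_{v,n}$ factors against $\Theta_{S,S'}^T(H_n)$, sort the local generators at each $v\in S'$ according to whether Galois restriction fixes them or multiplies them by $p$ (this split occurs precisely at $v\in S'_p$ and involves the growing factor $\langle g_s^{(n)}\rangle$ of the inertia group), discard the $p$-scaled ones using Lemma \ref{ignore-generators}, and then distribute the projective limit across the product using Lemmas \ref{Proj-ideal-lemma} and \ref{proj-ideal- lemma-1}, absorbing the surviving $(1-e_{\mathcal I_v}\sigma_{v,\infty}^{-1})$ factors into the $p$-adic $L$-function to pass from $\Theta_S^T$ to $\Theta_{S\cup S'_p}^T$. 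That is exactly the structure of the paper's argument.

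There is, however, one step whose justification as written would fail, and one prerequisite you treat as an afterthought. First, the identity $\varprojlim_n Fitt_{\mathbb{Z}_p[G_n]^-}(M(H_n)_p^-)=Fitt_{\mathbb{Z}_p[[\mathcal{G}]]^-}(M(H_\infty)_p^-)$ does \emph{not} follow from surjectivity of the transition maps on $M$ alone; the commutation of Fitting ideals with projective limits (Corollary 2.3 of Gambheera--Popescu) also needs $M(H_\infty)_p^-$ to be finitely generated and torsion over the classical Iwasawa algebra $\Lambda$. The paper supplies this from the short exact sequence \eqref{M-SES}, by exhibiting $\prod_{v\in S_f}(\gamma^{k_vr_v}-1)\in\Lambda$ as an annihilator of the middle term. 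Without this torsion input your cited justification is incomplete. Second, before any of the Section~2 lemmas can be applied one must actually verify that the level-$n$ fractional ideal $J_n:=(\Theta_{S,S'}^T(H_n))\,Fitt_{\mathbb{Z}_p[G_n]^-}^{[1]}((Z_{S'}(H_n))_p^-)$ is integral (equivalently, has uniformly bounded denominators) and that $\pi_n^{n+1}(J_{n+1})\subseteq J_n$; the paper proves integrality by the containment $J_n\subseteq Fitt_{\mathbb{Z}_p[G_n]^-}(A^T(H_n)^{\vee,-})$ deduced from Theorem \ref{strong-kurihara}. You mention ``controlling bounded denominators'' only as a remaining obstacle, but it is a load-bearing hypothesis of every lemma you invoke, so it needs to be established first, not merely flagged.
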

\begin{proof}
    First let us prove that the left hand side is well defined. For a prime $v\in S'$, let $\sigma_{v,\infty}\in \mathcal{G}$ be a choice of Frobenius and let $\sigma_{v,n}:=\pi_n(\sigma_{v,\infty})$. Moreover, let $$h_{v,n}:=1-e_{I_{v,n}}\sigma_{v,n}^{-1}+|I_{v,n}|$$ where $I_{v,n}$ is the inertia group of $v$ in the extension $H_n/F$. Observe that, by Theorem \ref{A-shifted-Fitt} for each $n$ we have,
    $$\Big(\prod_{v\in S'} h_{v,n} \Big)Fitt_{\mathbb{Z}_p[G_n]^-}^{[1]}(Z_{S'}(H_n)_p^-) \subseteq \prod_{v\in S'}(N(I_{v,n}), 1-e_{I_{v,n}}\sigma_{v,n}^{-1}).$$
Therefore, using Theorem \ref{strong-kurihara}, we have

\begin{align*}  (\Theta_{S,S'}^T(H_n))Fitt_{\mathbb{Z}_p[G_n]^-}^{[1]}(Z_{S'}(H_n)_p^-)&=(\Theta_{S,\emptyset}^T(H_n))\Big(\prod_{v\in S'} h_{v,n} \Big)Fitt_{\mathbb{Z}_p[G_n]^-}^{[1]}(Z_{S'}(H_n)_p^-)\\
&\subseteq \Theta_{S_{\infty}}^T(H_n) \prod_{v\in S_f} (1-e_{I_{v,n}}\sigma_{v,n}^{-1})\prod_{v\in S'}(N(I_{v,n}), 1-e_{I_{v,n}}\sigma_{v,n}^{-1}) \\
    &\subseteq Fitt_{\mathbb{Z}_p[G_n]^-}(A^T(H_n)^{\vee,-})
\end{align*}
Hence, $J_n:=(\Theta_{S,S'}^T(H_n))Fitt_{\mathbb{Z}_p[G_n]^-}^{[1]}(Z_{S'}(H_n)_p^-)$ is an ideal of $\mathbb{Z}_p[G_n]^-$. Now, let us prove that the transition maps are well defined. That is, $\pi_n^{n+1}(J_{n+1})\subseteq J_n$. For this, we focus on generators. First observe that, $\pi_n^{n+1}(\Theta_{S,\emptyset}^T(H_{n+1}))=\Theta_{S,\emptyset}^T(H_{n})$.\\\\
Now, we know that $Z_{S'}(H_n)_p=\bigoplus_{v\in S'} A_v$ where $$A_v=\mathbb{Z}_p[G/I_{v,n}]/(1-\sigma_{v,n}^{-1}+|I_{v,n}|).$$
Now, let us look at the generators of $J_n$. These can be obtained by looking at the generators of the fractional ideals $Fitt_{\mathbb{Z}_p[G_n]^-}^{[1]}(A_v)$ for each $v\in S'$. Now, for a fixed $v\in S'$, and the given cyclic decomposition of $I_{v,n}$ we consider the terms $e(\lambda,\mu)$ as in Lemma \ref{Shifted-Fitt-generators}. For each $n$ and fixed tuples $\lambda$ and $\mu$ let us denote them as $e_n(\lambda,\mu)$. Observe that for $n\gg 0$,
\begin{itemize}
    \item $\pi_n^{n+1}(e_{n+1}(\lambda,\mu))=e_{n}(\lambda,\mu)$ if $v$ is not above $p$, or $\lambda$  does not contain $s$. Let us call these good generators.
    \item $\pi_n^{n+1}(e_{n+1}(\lambda,\mu))=p\cdot e_{n}(\lambda,\mu)$ if else. Let us call these bad generators.
\end{itemize}
Moreover, for $n\gg 0$ we have $\pi_n^{n+1}(N(I_{v,n+1}))=p\cdot N(I_{v,n})$ or $ N(I_{v,n})$ depending on whether $v$ is above $p$ or not. For each $n$ and prime $v$ of $F$ we also have, $$\pi_n^{n+1}(1-e_{I_{v,n+1}}\sigma_{v,n+1}^{-1})=1-e_{I_{v,n+1}}\sigma_{v,n}^{-1}.$$ Therefore, clearly, $\pi_n^{n+1}(J_{n+1})\subseteq J_n$ for each $n$. On the other hand, clearly we have that the projection map $M(H_{n+1})_p^-\xrightarrow[]{} M(H_{n})_p^-$ induced by $\pi_n^{n+1}$ is surjective. Hence, we have $$\pi_n^{n+1}(Fitt_{\mathbb{Z}_p[G_{n+1}]^-}(M(H_{n+1})_p^-)\subseteq Fitt_{\mathbb{Z}_p[G_{n}]^-}(M(H_{n})_p^-).$$ Therefore, the left hand side is well defined.\\

Moreover, from the previous calculation and Lemma \ref{ignore-generators}, we can ignore the bad generators and $N(I_v)$ terms for primes $v$ above $p$, when computing the projective limits. Using this observation and Theorem \ref{A-shifted-Fitt}, we have

\begin{eqnarray*}
    \varprojlim_n (\Theta_{S,S'}^T(H_n))Fitt_{\mathbb{Z}_p[G_n]^-}^{[1]}(Z_{S'}(H_n)_p^-)&=&\Theta_{S}^T(H_{\infty})\prod_{v\in S'\setminus S_p} Q_v \prod_{v\in S_{p}'} (1-e_{\mathcal{I}_v}\sigma_{v,\infty}^{-1})\Tilde{J}_v^{\infty}  \\
  &=& \Theta_{S\cup S_{p}'}^T(H_{\infty})\prod_{v\in S'\setminus S_p} Q_v \prod_{v\in S_{p}'} \Tilde{J}_v^{\infty}
\end{eqnarray*}

Now, since the remaining generators align under the ransition maps $\pi_n^{n+1}$, by Lemma \ref{Proj-ideal-lemma} and Lemma \ref{ignore-generators}, we can take the projective limit on the left hand side separately for each ideal. Therefore, we are left with proving the following.\\
$$\varprojlim_n Fitt_{\mathbb{Z}_p[G_n]^-}(M(H_n)_p^-)=Fitt_{\mathbb{Z}_p[[\mathcal{G}]]^-}(M(H_{\infty})_p^-)$$
Observe that we have the following short exact sequence for each $n$.

$$0 \xrightarrow[]{} M(H_n)_p^- \xrightarrow[]{} \prod_{v\in S_f} \mathbb{Z}_p[G_n]^-/(1-\sigma_{v,n})\xrightarrow[]{} (Y_{S_f,n})_p^- \xrightarrow[]{} 0$$
Here $Y_{S,n}$ is the module $Y_S$ as in the sequence \ref{nabla-sequences} at the layer $H_n$. Now, by taking the projective limit of the above short exact sequence of compact modules under the transition maps induced by Galois restriction, we obtain,

\begin{equation}\label{M-SES}
    0 \xrightarrow[]{} M(H_{\infty})_p^- \xrightarrow[]{} \prod_{v\in S_f} \mathbb{Z}_p[[\mathcal{G}]]^-/(1-\sigma_{v,\infty})\xrightarrow[]{} (Y_{S_f,\infty})_p^- \xrightarrow[]{} 0
\end{equation}
where $(Y_{S_f,\infty})_p=\varprojlim\limits_n (Y_{S_f,n})_p$. Now, observe that all the above modules are finitely generated. For each $v\in S_f$, one can choose $\sigma_{v,\infty}$ such that it has infinite order. Hence, $$\sigma_{v,\infty}=(g_v,\gamma^{k_v})\in G\times \Gamma \cong \mathcal{G}$$ where $\gamma$ is a topological generator of $\Gamma$. Then, $\gamma^{k_v r_v}-1$ is a multiple of $\sigma_{v,\infty}-1$ where $r_v=ord(g_v)$. Hence, the middle module is annihilated by $\prod_{v\in S}(\gamma^{k_v r_v}-1)\in \Lambda$ and therefore it is a $\Lambda-$torsion module. Consequently, so is $M(H_{\infty})_p^-$. Moreover, the transition maps $M(H_{n+1})_p^- \xrightarrow[]{}M(H_n)_p^- $ are surjective. Therefore, the desired claim follows from Corollary 2.3 of \cite{gampheera-popescu-RW}. This completes the proof.   
\end{proof}

\begin{remark}
    Instead of proving the theorem above, one might consider taking the projective limit of Theorem \ref{Fitt-Omega} and proving an equivariant main conjecture for the module $\Omega_{S,S'}^T$ at the top of the Iwasawa tower in the spirit of \cite{gampheera-popescu-RW}. But, unfortunately, unlike the Ritter-Weiss module, the modules $\Omega_{S,S'}^T(H_n)_p$ do not align well in the Iwasawa tower. Check the isomorphism 3.6 in \cite{Atsuta-Kataoka-ETNC}. Therefore, it is not possible to even define an object $\Omega_{S,S'}^T(H_{\infty})_p$.
\end{remark}

Now we are ready to state the main result of this paper. Observe that this generalizes Theorem 4.9 of \cite{gampheera-popescu-RW}.

\begin{theorem}\label{main theorem}
    Suppose that $S$ and $T$ satisfy Assumption \ref{assumption-Iwasawa}. Then, we have the following.
    $$Fitt_{\mathbb{Z}_p[[\mathcal{G}]]^-}(X_S^{T,-})=\Theta_{((S\cap S_{ram})\cup S_p)}^T(H_{\infty})\prod_{v\in S_{ram}\setminus (S\cup T\cup S_p)} Q_v \prod_{v\in S_p\setminus S}\Tilde{J}_v^{\infty}\prod_{v\in S\cap S_{ram}}R_v$$
    Here $S_{ram}=S_{ram}(H_{\infty}/F)$.
\end{theorem}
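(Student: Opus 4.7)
The plan is to take the projective limit of the identity from Proposition \ref{equality} along the cyclotomic Iwasawa tower and match the resulting expression with the right-hand side of the theorem.

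First, I would choose the auxiliary set $S' := S_{ram}(H_\infty/F) \setminus (S \cup T)$, which by construction is disjoint from $S \cup T$ and satisfies $S_{ram}(H_n/F) \subseteq S \cup S' \cup T$ at every layer. For each sufficiently large $n$, the triple $(S, S', T)$ with the number field $H_n/F$ verifies Assumption \ref{assumption}: the inclusions $S_{wild}^p \subseteq S_{ram} \subseteq S \cup S' \cup T$ and $S_{wild}^p \cap S_p \subseteq S_p \subseteq S \cup S'$ (the latter using $T \cap S_p = \emptyset$) hold automatically, and $\mu(H_n)_p^T$ is trivial for $n$ large enough because $T$ is nonempty. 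Applying Proposition \ref{equality} at each such layer gives
\[
Fitt_{\mathbb{Z}_p[G_n]^-}(A_S^{T,-}(H_n))\prod_{v \in S_f}(\sigma_{v,n}-1) = (\Theta_{S,S'}^T(H_n))\, Fitt_{\mathbb{Z}_p[G_n]^-}^{[1]}((Z_{S'}(H_n))_p^-)\, Fitt_{\mathbb{Z}_p[G_n]^-}(M(H_n)_p^-).
\]

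Next, passing to the projective limit, Proposition \ref{LHS} converts the left-hand side into $Fitt_{\mathbb{Z}_p[[\mathcal{G}]]^-}(X_S^{T,-})\prod_{v \in S_f}(\sigma_{v,\infty}-1)$, while Proposition \ref{RHS} rewrites the right-hand side as
\[
\Theta_{S \cup S_p'}^T(H_\infty)\prod_{v \in S' \setminus S_p}Q_v \prod_{v \in S_p'}\tilde{J}_v^\infty\, Fitt_{\mathbb{Z}_p[[\mathcal{G}]]^-}(M(H_\infty)_p^-),
\]
where $S_p' = S' \cap S_p = S_p \setminus S$ and $S' \setminus S_p = S_{ram}(H_\infty/F) \setminus (S \cup T \cup S_p)$. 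Using $S_p \subseteq S_{ram}(H_\infty/F)$, one has $S \cup S_p' = S \cup S_p$, and comparing Euler factors at the finite unramified places of $S$ yields
\[
\Theta_{S \cup S_p}^T(H_\infty) = \Theta_{(S \cap S_{ram}) \cup S_p}^T(H_\infty)\prod_{v \in S_f \setminus S_{ram}}(1-\sigma_{v,\infty}^{-1}).
\]
Since $1-\sigma_{v,\infty}^{-1} = -\sigma_{v,\infty}^{-1}(\sigma_{v,\infty}-1)$ is a unit multiple of $\sigma_{v,\infty}-1$, these factors exactly cancel the contribution of $v \in S_f \setminus S_{ram}$ to the product $\prod_{v \in S_f}(\sigma_{v,\infty}-1)$ on the left, leaving only the factors $\prod_{v \in S_f \cap S_{ram}}(\sigma_{v,\infty}-1)$ uncanceled.

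The remaining and most delicate step is to establish the identity
\[
Fitt_{\mathbb{Z}_p[[\mathcal{G}]]^-}(M(H_\infty)_p^-) = \prod_{v \in S_f \cap S_{ram}}(\sigma_{v,\infty}-1)\, R_v.
\]
Since $M = \prod_{v \in S_f} M_v$ with $M_v = 0$ whenever $v$ is unramified in $H_\infty/F$, this reduces to a local computation $Fitt(M_v(H_\infty)_p^-) = (\sigma_{v,\infty}-1)\, R_v$ at each ramified finite $v$. Here one must decompose the decomposition group $\mathcal{G}_v = A \times B$ with $A$ the torsion part and $B$ free of $\mathbb{Z}_p$-rank $\leq 1$, apply the shifted Fitting-ideal formulas of Theorem \ref{A-shifted-Fitt} and Lemma \ref{Shifted-Fitt-generators} at each finite layer (exactly as was done for the primes contributing $\tilde{J}_v^\infty$ in the proof of Proposition \ref{RHS}), and invoke Lemmas \ref{Proj-ideal-lemma} and \ref{ignore-generators} to discard the ``bad" generators in the limit. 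The main obstacle lies in controlling the interplay between the torsion part $A$ and the pro-$p$ part $B$ of $\mathcal{G}_v$ along the tower so that the surviving generators match precisely the description $N(A)\Delta B^{r_B - 2}$ defining $R_v$. Once this local identification is carried out, canceling the non-zero-divisor $\prod_{v \in S_f \cap S_{ram}}(\sigma_{v,\infty}-1)$ on both sides produces exactly the stated formula, identifying $S \cap S_{ram}$ with $S_f \cap S_{ram}$ under the convention that $S_{ram}(H_\infty/F)$ consists of finite places.
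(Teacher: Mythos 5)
Your outline correctly identifies the choice $S'=S_{\mathrm{ram}}(H_\infty/F)\setminus(S\cup T)$, the verification of Assumption \ref{assumption} at each layer, and the use of Proposition \ref{equality} together with Proposition \ref{LHS} and Proposition \ref{RHS} to reduce everything to a formula for $Fitt_{\mathbb{Z}_p[[\mathcal{G}]]^-}(M(H_\infty)_p^-)$; the cancellation of Euler factors $(1-\sigma_{v,\infty}^{-1})$ at the unramified finite places of $S$ against the corresponding $(\sigma_{v,\infty}-1)$ on the left-hand side is also organized correctly, and the target identity $Fitt_{\mathbb{Z}_p[[\mathcal{G}]]^-}(M(H_\infty)_p^-)=\prod_{v\in S_f\cap S_{\mathrm{ram}}}(\sigma_{v,\infty}-1)\,R_v$ is indeed what has to hold. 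Up to that point you are following the paper's own route.

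However, you explicitly leave the crucial step open, writing that the ``main obstacle lies in controlling the interplay'' between the torsion and pro-$p$ parts of $\mathcal{G}_v$, and you say ``once this local identification is carried out'' the result follows. This is a genuine gap, not a routine detail. Moreover, the strategy you sketch for closing it is not quite right: you propose to fix the single decomposition $\mathcal{G}_v=A\times B$ with $A$ the torsion subgroup and run the layer-by-layer shifted Fitting ideal computation as in Proposition \ref{RHS}, but $R_v$ is by definition generated over \emph{all} decompositions $\mathcal{G}_v=A\times B$ with $A$ finite, not just the canonical one, so a limit computed along one decomposition would only give a subideal of $R_v$ a priori; one cannot simply ``discard the bad generators'' via Lemma \ref{ignore-generators} and expect the surviving ones to span the full ideal $R_v$. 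The paper sidesteps this entirely: instead of computing $Fitt(M_v(H_\infty)_p^-)$ directly from the finite levels, it uses the short exact sequence \ref{M-SES} and the fact that the middle module $\prod_{v\in S_f}\mathbb{Z}_p[[\mathcal{G}]]^-/(\sigma_{v,\infty}-1)$ has projective dimension $1$ (Proposition 4.9 of \cite{gambheera-popescu}) to convert the problem into evaluating the shifted Fitting ideal $Fitt^{[1]}_{\mathbb{Z}_p[[\mathcal{G}]]^-}((Y_{S,\infty})_p^-)$, and then quotes Theorem A.6 of \cite{gampheera-popescu-RW}, which already computes this shifted Fitting ideal and is the source of the $R_v$'s (and of the $(\sigma_{v,\infty}-1)^{-1}$ factors at the unramified $v\in S$). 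Without invoking that result, or reproving it in full, the argument does not close.
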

\begin{proof}
    Let $S'=S_{ram}\setminus (S\cup T)$. Then, by Theorem \ref{equality}, Proposition \ref{LHS} and Proposition \ref{RHS}, we have the following.
    $$Fitt_{\mathbb{Z}_p[[\mathcal{G}]]^-} X_S^{T,-}\prod_{v\in S_f} (\sigma_{v,\infty}-1)=\Theta_{S\cup S_{p}}^T(H_{\infty})\prod_{v\in S'\setminus S_p} Q_v \prod_{S_p\setminus S} \Tilde{J}_v^{\infty}Fitt_{\mathbb{Z}_p[[\mathcal{G}]]^-}(M(H_{\infty})_p^{-}) $$
Now, consider the short exact sequence \ref{M-SES}. We clearly have that $$Fitt_{\mathbb{Z}_p[[\mathcal{G}]]^-}\left(\prod_{v\in S_f}\mathbb{Z}_p[[\mathcal{G}]]^-/(\sigma_{v,\infty}-1)\right)=\left(\prod_{v\in S_f}(\sigma_{v,\infty}-1)\right).$$ As seen in the proof of Proposition \ref{RHS}, the generator is a factor of a nonzero element in the integral domain $\Lambda$. Hence, it is a nonzero divisor of the semi local ring $\mathbb{Z}_p[[\mathcal{G}]]^-$. So, by Proposition 4.9 of \cite{gambheera-popescu}, the middle module of the sequence \ref{M-SES} has projective dimension 1. Then, we have the following.
$$Fitt_{\mathbb{Z}_p[[\mathcal{G}]]^-}M(H_{\infty})_p^-=\prod_{v\in S_f}(\sigma_{v,\infty}-1)Fitt_{\mathbb{Z}_p[[\mathcal{G}]]^-}^{[1]}(Y_{S,\infty})_p^-$$
Now, combining the above equations and cancelling out the principal ideal  $\prod_{v\in S}(\sigma_{v,\infty}-1)$ generated by a nonzero divisor, we obtain the following. Here we are also using the fact that $S'\setminus S_p=S_{ram}\setminus (S\cup T\cup S_p)$.
$$Fitt_{\mathbb{Z}_p[[\mathcal{G}]]^-} X_S^{T,-}=\Theta_{S\cup S_{p}}^T(H_{\infty})\prod_{v\in S_{ram}\setminus (S\cup T\cup S_p)} Q_v \prod_{S_p\setminus S} \Tilde{J}_v^{\infty}Fitt_{\mathbb{Z}_p[[\mathcal{G}]]^-}^{[1]}(Y_{S,\infty})_p^-$$
Observe that, by Theorem A.6 of \cite{gampheera-popescu-RW}, we have
$$Fitt_{\mathbb{Z}_p[[\mathcal{G}]]^-}^{[1]}(Y_{S,\infty})_p^-=\prod_{v\in S\setminus S_{ram}}(\sigma_{v,\infty}-1)^{-1}\prod_{v\in S\cap S_{ram}} R_v$$
We also clearly have $$\Theta_{S\cup S_{p}}^T(H_{\infty})=\Theta_{((S\cap S_{ram})\cup S_{p})}^T(H_{\infty})\prod_{v\in S\setminus S_{ram}}(\sigma_{v,\infty}-1)$$
By putting the above three equations together and canceling out the nonzero divisors gives our result.\end{proof}Observe that $X_{S_{\infty}}^{T,-}=X^{T,-}$. Therefore, by plugging $S=S_{\infty}$ we obtain the following corollary which is an integral equivariant refinement of the Wiles' main conjecture for totally real fields.
\begin{corollary}\label{main corollary}
    Suppose $T\cap S_p=\emptyset$. Then, we have the following.
    $$Fitt_{\mathbb{Z}_p[[\mathcal{G}]]^-}(X^{T,-})=\Theta_{S_p \cup S_{\infty}}^T(H_{\infty})\prod_{v\in S_{ram}\setminus (T\cup S_p)} Q_v \prod_{v\in S_p}\Tilde{J}_v^{\infty}$$
\end{corollary}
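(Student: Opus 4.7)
The plan is to deduce Corollary \ref{main corollary} as a direct specialization of Theorem \ref{main theorem} by taking $S=S_\infty$, and then simplify each factor on the right-hand side.

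First I would verify that the hypothesis of Theorem \ref{main theorem} holds for the pair $(S_\infty, T)$: Assumption \ref{assumption-Iwasawa} requires $S_\infty \subseteq S$ and $T \cap S_p = \emptyset$, both of which are immediate (the first because $S = S_\infty$, the second by the hypothesis of the corollary). Next I would identify the Iwasawa module: by the definition of $Cl_S^T(H_n)$ in Section 3 as the quotient of $Cl^T(H_n)$ by the classes of ideals supported in $S_f$, when $S = S_\infty$ one has $S_f = \emptyset$, so $Cl_{S_\infty}^T(H_n) = Cl^T(H_n)$. Passing to $p$-Sylow subgroups and taking projective limits gives $X_{S_\infty}^{T,-} = X^{T,-}$.

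Now I would simplify each factor on the right-hand side of Theorem \ref{main theorem} under the substitution $S = S_\infty$. Since $S_{ram} = S_{ram}(H_\infty/F)$ consists of finite places and $S_\infty$ consists of archimedean places, we have $S \cap S_{ram} = \emptyset$. Therefore:
\begin{itemize}
\item $(S \cap S_{ram}) \cup S_p = S_p$, and by the definition of $\Theta_{-}^{T}$ (which only involves Euler factors at finite primes not in the subscript set), $\Theta_{S_p}^T(H_\infty) = \Theta_{S_p \cup S_\infty}^T(H_\infty)$;
\item $S_{ram} \setminus (S \cup T \cup S_p) = S_{ram} \setminus (T \cup S_p)$, since $S_\infty \cap S_{ram} = \emptyset$;
\item $S_p \setminus S = S_p$, since $S_p$ consists of finite places;
\item the product $\prod_{v \in S \cap S_{ram}} R_v$ is an empty product, hence equals $(1)$.
\end{itemize}

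Substituting all these simplifications into the conclusion of Theorem \ref{main theorem} yields exactly the identity of the corollary. There is no genuine obstacle here: the corollary is a clean specialization, and the only content beyond citing Theorem \ref{main theorem} is the observation that $S_\infty$ contributes nothing to any of the products or to the Stickelberger element subscript, together with the identification $X_{S_\infty}^T = X^T$.
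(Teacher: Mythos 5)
Your proposal is correct and follows exactly the paper's argument: specialize Theorem \ref{main theorem} at $S=S_\infty$, using $X_{S_\infty}^{T,-}=X^{T,-}$ and the fact that $S_\infty$ (being archimedean) contributes nothing to the sets $S\cap S_{ram}$, $S_p\setminus S$, or the subscript of the Stickelberger element. The paper states this specialization more tersely, but your more detailed verification of each simplification is the same reasoning.
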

\begin{remark}
    One can easily check that for each $v\in S_{ram}$, the ideal generated by $Q_v$ in $\mathbb{Q}_p(\mathbb{Z}_p[[\mathcal{G}]]^-)$ is the whole ring. Same is true for the ideal generated by $\Tilde{J}_v^{\infty}$ for each $v\in S_p$. Therefore, one can recover Theorem \ref{Wiles} from Corollary \ref{main corollary}.
\end{remark}
Based on the Theorem \ref{main theorem} and Corollary \ref{main corollary}, we make the following two conjectures.

\begin{conjecture}
    For any set $S$ of primes, we have the following.
    $$Fitt_{\mathbb{Z}_p[[\mathcal{G}]]^-}(X_S^{-})=\left( \Theta_{((S\cap S_{ram})\cup S_p)}^{\{v_0\}}(H_{\infty})\prod_{v\in S_{ram}\setminus (S\cup \{v_0\}\cup S_p)} Q_v \prod_{S_p\setminus S}\Tilde{J}_v^{\infty}\prod_{S\cap S_{ram}}R_v;p\nmid v_0 \right)$$
\end{conjecture}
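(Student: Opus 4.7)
The plan is to attack the conjecture by establishing the two inclusions separately; only the direction $\supseteq$ appears within reach of the tools developed in this paper. For that direction, fix a prime $v_0$ of $F$ with $p\nmid v_0$ and $v_0\notin S$. Passing the Greither-Popescu exact sequence (equation (10) of \cite{Greither-Popescu}, as used in the proof of Theorem \ref{Wiles}) to the Iwasawa-tower limit, taking minus parts, and quotienting out $S$-classes yields a surjection $X_S^{\{v_0\},-}\twoheadrightarrow X_S^-$ of $\mathbb{Z}_p[[\mathcal{G}]]^-$-modules; the requisite surjectivity of the norm maps along the tower is exactly the argument employed in the proof of Proposition \ref{LHS}. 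Since Fitting ideals only grow under surjections, Theorem \ref{main theorem} applied with $T=\{v_0\}$ exhibits each $v_0$-generator of the right-hand side of the conjecture as an ideal contained in $Fitt_{\mathbb{Z}_p[[\mathcal{G}]]^-}(X_S^-)$. Summing over admissible $v_0$ gives $\supseteq$.

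For the reverse inclusion, the natural tool is the full four-term exact sequence
$$0\to M_\infty^-\to \mathbb{Z}_p[[\mathcal{G}]]^-/(1-\sigma_{v_0,\infty}^{-1}\mathbf{N}v_0)\to X_S^{\{v_0\},-}\to X_S^-\to 0,$$
where $M_\infty^-:=\varprojlim_n\mu(H_n)_p$ with transition maps induced by the norm. Splitting this into two short exact sequences and writing $K_{v_0}$ for the kernel of $X_S^{\{v_0\},-}\twoheadrightarrow X_S^-$, one obtains the standard inequality $Fitt(X_S^{\{v_0\},-})\supseteq Fitt(K_{v_0})\cdot Fitt(X_S^-)$, with $K_{v_0}$ realized as a quotient of the principal module $\mathbb{Z}_p[[\mathcal{G}]]^-/(1-\sigma_{v_0,\infty}^{-1}\mathbf{N}v_0)$ by the image of $M_\infty^-$. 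The idea is then to let $v_0$ vary: by Chebotarev density the elements $\{1-\sigma_{v_0,\infty}^{-1}\mathbf{N}v_0\}$ can be arranged to be coprime to any prescribed finite-index ideal, and so one hopes to combine the individual bounds $Fitt(X_S^{\{v_0\},-})\subseteq Fitt(X_S^-)$ into a sharp equality after assembling enough auxiliary primes.

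The principal obstacle is that every short-exact-sequence inequality available in this picture points in the wrong direction for the inclusion $\subseteq$, and $X_S^-$ is essentially never of finite projective dimension over $\mathbb{Z}_p[[\mathcal{G}]]^-$, so its Fitting ideal is not principal and no direct analytic handle is available. The limit $M_\infty^-$, being the Tate module of cyclotomic roots of unity, couples the conjecture to the cyclotomic Tate twist and to the $\mu$-invariant of $F$ in the sense of Ferrero-Washington, which further complicates any attempt to quantify the gap between $Fitt(K_{v_0})$ and $(1-\sigma_{v_0,\infty}^{-1}\mathbf{N}v_0)$ uniformly in $v_0$. Pushing the Chebotarev heuristic to an actual identity would likely require either an Euler-system argument permitting $T=\emptyset$ (an option explicitly excluded in \cite{Bullach-Burns-Daoud-Seo} and \cite{Dasgupta-Kakde-Silliman-ETNC}), or a new technique for extracting integral information directly at the trivial Tate-twist locus of $\mathbb{Z}_p[[\mathcal{G}]]^-$. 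This is the same source of integrality loss that already obstructs reading $Fitt(X^-)$ off Wiles' theorem (cf.\ Remark \ref{not-equivariant}), and is precisely why the statement remains conjectural.
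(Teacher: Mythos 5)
This statement is a Conjecture in the paper, and the paper itself proves only one inclusion, namely $\supseteq$, via the surjection $X_S^{\{v_0\},-}\twoheadrightarrow X_S^-$ together with Theorem \ref{main theorem} applied at $T=\{v_0\}$; that is exactly the content of the Remark that immediately follows the statement. Your treatment of the $\supseteq$ direction reproduces this argument faithfully: the surjection forces $Fitt_{\mathbb{Z}_p[[\mathcal{G}]]^-}(X_S^{\{v_0\},-})\subseteq Fitt_{\mathbb{Z}_p[[\mathcal{G}]]^-}(X_S^-)$, and summing the resulting ideals over all admissible $v_0$ (i.e.\ $p\nmid v_0$) gives containment of the conjectured right-hand side in the Fitting ideal. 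So on the half that can actually be proven, your approach coincides with the paper's.

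For the $\subseteq$ direction you correctly identify that no proof is available and analyze the obstruction. Two small caveats on the heuristic you sketch. First, the four-term sequence you write down is a limit of the Greither-Popescu sequence for $A^T\to A$; passing from $A^T\to A$ to the $S$-modified version $A_S^T\to A_S$ requires checking that the left two terms survive the quotient by $S$-ideal classes unchanged, which is plausible but not automatic, and the paper nowhere asserts it in this form. Second, the short-exact-sequence bound $Fitt(X_S^{\{v_0\},-})\supseteq Fitt(K_{v_0})\cdot Fitt(X_S^-)$ indeed runs in the unhelpful direction, and as you note the Chebotarev-style amalgamation over $v_0$ gives no honest upper bound on $Fitt(X_S^-)$ because $Fitt(K_{v_0})$ is not a priori the full principal ideal $(1-\sigma_{v_0,\infty}^{-1}\mathbf{N}v_0)$ once the image of $M_\infty^-$ is quotiented out. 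Your diagnosis that the residual difficulty lives at the trivial Tate-twist and is tied to the failure of $X_S^-$ to have finite projective dimension matches the motivations stated in the paper's introduction and Remark \ref{not-equivariant}, and nothing in your discussion contradicts the paper; it simply (and accurately) explains why the statement is left as a conjecture.
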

\begin{remark}
    Notice that we do not have the option of putting $T=\emptyset$ in Corollary \ref{main corollary} because $T$ is always assumed to be nonempty. Hence, for $T$ the smallest option is $T=\{v_0\}$ where $v_0$ is a prime of $F$ away from $p$. Moreover, we have a surjection $X_S^{\{v_0\},-}\xrightarrow{} X_S^-$. Therefore by the properties of Fitting ideals we have the backward inclusion in the previous conjecture.
\end{remark}
\begin{conjecture}\label{conjecture}
    We have the following.
    $$Fitt_{\mathbb{Z}_p[[\mathcal{G}]]^-}(X^{-})=\left(\Theta_{S_p\cup S_{\infty}}^{\{v_0\}}(H_{\infty})\prod_{v\in S_{ram}\setminus (\{v_0\}\cup S_p)} Q_v \prod_{S_p}\Tilde{J}_v^{\infty};p\nmid v_0 \right)$$
\end{conjecture}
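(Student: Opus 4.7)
The strategy is to combine Corollary \ref{main corollary} with a careful analysis of the surjection $X^{\{v_0\},-}\twoheadrightarrow X^-$ as $v_0$ varies over primes of $F$ away from $p$. The $\supseteq$ inclusion is the easy direction (already pointed out in the remark preceding the conjecture): the surjection induces $Fitt_{\mathbb{Z}_p[[\mathcal{G}]]^-}(X^{\{v_0\},-})\subseteq Fitt_{\mathbb{Z}_p[[\mathcal{G}]]^-}(X^-)$ by the basic functoriality of Fitting ideals under quotients, and applying Corollary \ref{main corollary} with $T=\{v_0\}$ (valid since $p\nmid v_0$ gives $T\cap S_p=\emptyset$) identifies the left side with one of the generators appearing on the right of the conjectured equality.

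For the reverse $\subseteq$ inclusion, the plan is to take the projective limit of the Greither--Popescu exact sequence used in the proof of Theorem \ref{Wiles}, applied at each finite layer $H_n$ with $T=\{v_0\}$. After passing to the minus part and tensoring with $\mathbb{Z}_p$, this yields an exact sequence of $\mathbb{Z}_p[[\mathcal{G}]]^-$-modules
$$0\longrightarrow T_p(\mu(H_\infty))^-\longrightarrow \mathbb{Z}_p[[\mathcal{G}]]^-/(1-\sigma_{v_0,\infty}^{-1}\mathbf{N}v_0)\longrightarrow X^{\{v_0\},-}\longrightarrow X^-\longrightarrow 0.$$
Let $M_{v_0}$ denote the image of the middle cyclic module in $X^{\{v_0\},-}$, so that $X^-\cong X^{\{v_0\},-}/M_{v_0}$. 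Starting from any finite presentation $\mathbb{Z}_p[[\mathcal{G}]]^{-,m}\xrightarrow{\Phi} \mathbb{Z}_p[[\mathcal{G}]]^{-,r}\twoheadrightarrow X^{\{v_0\},-}$ coming from the computation of Theorem \ref{main theorem}, one obtains a presentation of $X^-$ by augmenting $\Phi$ with one additional column representing a lift of the single generator of $M_{v_0}$. Computing $Fitt(X^-)$ from the determinantal ideals of this augmented matrix and summing the resulting ideals as $v_0$ ranges over all primes of $F$ not above $p$ should, in principle, recover the ideal on the right-hand side of the conjecture.

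The principal obstacle lies in executing this reconstruction precisely. Fitting ideals are not multiplicative along short exact sequences, so the interaction between $Fitt(X^{\{v_0\},-})$ and the cyclic piece $M_{v_0}$ is delicate. Moreover, the generator of $M_{v_0}$ is only well-defined modulo $T_p(\mu(H_\infty))^-$, which is $\Lambda$-torsion of potentially positive $\mu$-invariant; this ambiguity propagates into any minor computation. Controlling these ambiguities simultaneously for all $v_0$, plausibly via a Chebotarev-type density argument on the Frobenii $\sigma_{v_0,\infty}$ combined with a finer version of Lemma \ref{ignore-generators} that handles mixing between ``good'' and ``bad'' generators when $v_0\in S_{ram}$, is the essential difficulty. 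An alternative route would be to formulate and prove a genuinely equivariant main conjecture for a Selmer-type module that interpolates the $X^{\{v_0\},-}$ uniformly in $v_0$, circumventing the obstruction noted after the proof of Theorem \ref{main theorem} that $\Omega_{S,S'}^T(H_n)_p$ does not align well in the Iwasawa tower; finding the right such module, however, seems to require genuinely new input beyond the techniques developed here, which is why the statement is left as a conjecture.
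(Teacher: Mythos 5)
The statement you are addressing is Conjecture~\ref{conjecture}, which the paper does \emph{not} prove; it is left open. The only argument the paper supplies is in the remark immediately preceding it, which gives exactly the $\supseteq$ inclusion you describe: $X^{\{v_0\},-}\twoheadrightarrow X^-$ forces $Fitt(X^{\{v_0\},-})\subseteq Fitt(X^-)$, and by Corollary~\ref{main corollary} the left-hand side is one of the listed generators. Your write-up of that easy direction is correct and matches the paper's framing precisely. Since you also recognize explicitly that the forward inclusion $\subseteq$ is open and that your sketch cannot be completed with the techniques of the paper, your proposal is consistent with the paper: there is nothing to compare a ``proof'' against, and you do not overclaim.

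Two remarks on the outlined (and rightly abandoned) strategy for the hard inclusion. First, the assertion that $T_p(\mu(H_\infty))^-$ is ``$\Lambda$-torsion of potentially positive $\mu$-invariant'' is not right: $T_p(\mu(H_\infty))$ is either $0$ (if $\mu_p\not\subseteq H_\infty$) or isomorphic to $\Bbb Z_p(1)$, and $\Bbb Z_p(1)$ is a cyclic $\Lambda$-module annihilated by a linear distinguished polynomial, so it has $\mu$-invariant zero. The genuine obstacle is not a $\mu$-invariant but the fact that Fitting ideals are not multiplicative in the four-term exact sequence
\[
0\longrightarrow T_p(\mu(H_\infty))^-\longrightarrow \Bbb Z_p[[\mathcal G]]^-/(1-\sigma_{v_0,\infty}^{-1}\mathbf N v_0)\longrightarrow X^{\{v_0\},-}\longrightarrow X^-\longrightarrow 0,
\]
and that $Fitt(X^{\{v_0\},-})$, as computed in Theorem~\ref{main theorem}, is in general non-principal, so the ``augment by one column'' device does not yield the new Fitting ideal from the old one. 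Second, the proposal to sum over all $v_0$ and hope for cancellations via Chebotarev is exactly the kind of argument the paper's authors presumably considered and could not complete, which is why the statement is stated as a conjecture; you correctly identify that a fundamentally new ingredient (e.g.\ a Selmer-type module interpolating the $X^{\{v_0\},-}$ uniformly) would be needed. In short, your proposal is an accurate and honest assessment, in full agreement with the paper, that this remains open.
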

We conclude by computing the Fitting ideal of another interesting equivariant Iwasawa module. 

\begin{theorem}
    Define $\mathcal{A}^{T,\vee,-}:=\varprojlim_n A^T(H_n)^{\vee,-}$ where the transition maps induced by the natural maps $A^T(H_n)^{-}\xrightarrow[]{} A^T(H_{n+1})^{-}$ induced by inclusion. Suppose $T\cap S_p=\emptyset$. Then we have the following.
    $$Fitt_{\mathbb{Z}_p[[\mathcal{G}]]^-}(\mathcal{A}^{T,\vee,-})=\Theta_{S_p \cup S_{\infty}}^T(H_{\infty})\prod_{v\in S_{ram}\setminus (T\cup S_p)}\left(N(\mathcal{I}_v), 1-e_{\mathcal{I}_v}\sigma_{v,\infty}^{-1}\right)$$
\end{theorem}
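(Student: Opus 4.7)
The plan is to prove the theorem in the same spirit as Theorem \ref{main theorem}, by combining the finite-level result Theorem \ref{strong-kurihara} at each $H_n$ with the projective-limit machinery of Section 2. The hypothesis $\mu(H_n)_p^T=1$ required at level $n$ is automatic in our setting, because $T$ is nonempty and $T\cap S_p=\emptyset$ force any $p$-power root of unity congruent to $1$ modulo a $T$-prime to lie above $p$, a contradiction. Since $S_{ram}(H_n/F)=S_{ram}(H/F)\cup S_p=S_{ram}(H_\infty/F)$ for all $n\geq 1$, Theorem \ref{strong-kurihara} applied to $H_n/F$ yields, for each such $n$,
\begin{equation*}
Fitt_{\mathbb{Z}_p[G_n]^-}(A^T(H_n)^{\vee,-})=(\Theta_{S_\infty}^T(H_n))\prod_{v\in S_{ram}(H_\infty/F)\setminus T}(N(I_{v,n}),\,1-e_{I_{v,n}}\sigma_{v,n}^{-1}).
\end{equation*}

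I would then split the product on the right over $S_{ram}(H_\infty/F)\setminus T$ into the contributions of primes in $S_{ram}\setminus(T\cup S_p)$ and of primes in $S_p$, and analyze their behavior under the restriction maps $\pi_n^{n+1}$. For $v\in S_{ram}\setminus(T\cup S_p)$, the inertia group $I_{v,n}=\mathcal I_v$ is finite and independent of $n$, so both generators $N(\mathcal I_v)$ and $1-e_{\mathcal I_v}\sigma_{v,n}^{-1}$ are stable and Lemma \ref{Proj-ideal-lemma} delivers the factor $\prod_{v\in S_{ram}\setminus(T\cup S_p)}(N(\mathcal I_v),\,1-e_{\mathcal I_v}\sigma_{v,\infty}^{-1})$ in the limit. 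For $v\in S_p$, the $\Gamma$-part of $I_{v,n}$ grows with $n$, so $\pi_n^{n+1}(N(I_{v,n+1}))=p\cdot N(I_{v,n})$ while $\pi_n^{n+1}(1-e_{I_{v,n+1}}\sigma_{v,n+1}^{-1})=1-e_{I_{v,n}}\sigma_{v,n}^{-1}$; this is exactly the good/bad dichotomy set up in Lemma \ref{ignore-generators}, which allows us to discard the $N(I_{v,n})$--generators at $p$--places in the projective limit. The surviving factor $\prod_{v\in S_p}(1-e_{\mathcal I_v}\sigma_{v,\infty}^{-1})$ then merges with $\varprojlim_n\Theta_{S_\infty}^T(H_n)$ to give the genuine $p$--adic $L$--function $\Theta_{S_p\cup S_\infty}^T(H_\infty)\in\mathbb{Z}_p[[\mathcal G]]^-$. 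Assembling everything gives
\begin{equation*}
\varprojlim_n Fitt_{\mathbb{Z}_p[G_n]^-}(A^T(H_n)^{\vee,-})=\Theta_{S_p\cup S_\infty}^T(H_\infty)\prod_{v\in S_{ram}\setminus(T\cup S_p)}(N(\mathcal I_v),\,1-e_{\mathcal I_v}\sigma_{v,\infty}^{-1}).
\end{equation*}

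The final, and most delicate, step is to identify this projective limit of fractional ideals with $Fitt_{\mathbb{Z}_p[[\mathcal G]]^-}(\mathcal A^{T,\vee,-})$. Mimicking the proof of Proposition \ref{LHS}, this is an application of Corollary 2.3 of \cite{gampheera-popescu-RW}, whose hypotheses require $\mathcal A^{T,\vee,-}$ to be a finitely generated torsion $\Lambda$--module and the transition maps $A^T(H_{n+1})^{\vee,-}\to A^T(H_n)^{\vee,-}$ to be surjective. Finite generation and torsion follow from Pontryagin duality, as $\mathcal A^{T,\vee,-}$ is the dual of $\varinjlim_n A^T(H_n)^-$, a cofinitely generated cotorsion $\Lambda$--module by standard bounds on minus $p$--parts of class groups in the cyclotomic tower. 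The surjectivity of the transition maps amounts, by Pontryagin duality, to the injectivity of the inclusion-induced maps $A^T(H_n)^-\to A^T(H_{n+1})^-$, i.e., to the absence of capitulation on the minus part, and this is the real obstacle; it can be handled by a Hilbert 90 argument on the cyclic degree--$p$ extensions $H_{n+1}/H_n$ that exploits the $-1$--action of complex conjugation on the minus part.
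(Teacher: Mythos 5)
Your proposal follows essentially the same outline as the paper's proof: apply Theorem \ref{strong-kurihara} at each finite level $H_n$, sort the generators into those stable under Galois restriction (primes in $S_{ram}\setminus(T\cup S_p)$) versus those that pick up a factor of $p$ (the $N(I_{v,n})$ terms at $v\in S_p$), discard the latter via Lemma \ref{ignore-generators}, absorb $\prod_{v\in S_p}(1-e_{I_{v,n}}\sigma_{v,n}^{-1})$ into $\Theta_{S_\infty}^T(H_n)$ to produce $\Theta_{S_p\cup S_\infty}^T(H_n)$, and then identify $\varprojlim_n Fitt_{\mathbb{Z}_p[G_n]^-}(A^T(H_n)^{\vee,-})$ with $Fitt_{\mathbb{Z}_p[[\mathcal G]]^-}(\mathcal A^{T,\vee,-})$ via Corollary 2.3 of \cite{gampheera-popescu-RW}. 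Your observation that $\mu(H_n)_p^T=1$ is automatic once $T\neq\emptyset$ and $T\cap S_p=\emptyset$ is a worthwhile sanity check that the paper leaves implicit.

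The one place where your proposal diverges substantively is at the end. You describe the surjectivity of the dual transition maps --- equivalently the injectivity of the inclusion-induced maps $A^T(H_n)^-\to A^T(H_{n+1})^-$, i.e.\ the absence of minus-capitulation --- as the ``real obstacle,'' to be handled by a fresh Hilbert~90 argument, and you similarly give a vague justification (``standard bounds'') for $\mathcal A^{T,\vee,-}$ being finitely generated torsion over $\Lambda$. Neither is actually an obstacle: the injectivity on minus parts is precisely Lemma~2.9 of \cite{Greither-Popescu}, which the paper cites directly, and the finitely generated torsion claim is recorded in the proof of Corollary~3.15 of \cite{gambheera-popescu}. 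Your Hilbert~90 sketch is in fact the idea behind that cited lemma (minus units are essentially roots of unity, so the relevant cohomology group controlling capitulation vanishes after tensoring by $\mathbb Z_p$ and taking minus parts), so the approach would work --- but as written you are re-deriving a known and already cited input rather than closing a genuine gap. With those two steps replaced by citations, your argument is the paper's argument.
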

\begin{proof}
  By Lemma 2.9 of \cite{Greither-Popescu}, the map $A^T(H_n)^{-}\xrightarrow[]{} A^T(H_{n+1})^{-}$ is injective. Then, the transition map between the Pontryagin duals is surjective. Moreover, as noticed in the proof of Corollary 3.15 of \cite{gambheera-popescu}, $\mathcal{A}^{T,\vee,-}$ is a finitely generated and torsion $\Lambda-$module. Therefore, by Corollary 2.3 of \cite{gampheera-popescu-RW} we have the following.

  $$Fitt_{\mathbb{Z}_p[[\mathcal{G}]]^-}(\mathcal{A}^{T,\vee,-})=\varprojlim_n Fitt_{\mathbb{Z}_p[G_n]^-}(A^T(H_n)^{\vee,-})$$

  Now, let us look at the generators of $Fitt_{\mathbb{Z}_p[G_n]^-}(A^T(H_n)^{\vee,-})$ in Theorem \ref{strong-kurihara} at each level. Observe that for each $n$ and a prime $v$ of $F$,
  $$\pi_n^{n+1}(1-e_{I_{v,n+1}}\sigma_{v,n+1}^{-1})=1-e_{I_{v,n}}\sigma_{v,n}^{-1}$$ and $$\pi_n^{n+1}(N(I_{v,n+1}))=p \cdot N(I_{v,n}) \text{ or } N(I_{v,n})$$ depending on whether $p|v$ or not. Hence, when applying Lemma \ref{ignore-generators}, we can ignore the $N(I_{v,n})$ terms when $p|v$. Moreover, since $$\Theta_{S_{\infty}}^T(H_{n})\prod_{v\in S_p}(1-e_{I_{v,n}}\sigma_{v,n}^{-1})=\Theta_{S_{\infty}\cup S_p}^T(H_{n})$$ we have the following.
  $$\varprojlim_n (Fitt_{\mathbb{Z}_p[G_n]^-}A^T(H_n)^{\vee,-})=\varprojlim_n \left[\Theta_{S_{\infty}\cup S_p}^T(H_n)\prod_{v\in S_{ram}\setminus (S_p\cup T)}\left(N(I_{v,n}), 1-e_{I_{v,n}}\sigma_{v,n}^{-1}\right)\right]$$
  This together with Lemma 4.7 of \cite{gambheera-popescu} implies the desired result.
\end{proof}

\bibliographystyle{amsplain}
\bibliography{Fitt}
\end{document}